\let\expandafter\oldproof\csname\string\proof\endcsname
\let\oldendproof\endproof
\renewenvironment{proof}[1][\proofname]{%
	\oldproof[\bf #1]%
}{\oldendproof}
\theoremstyle{plain}
\newtheorem{theorem}{Theorem}
\newtheorem{lemma}{Lemma}[section]
\newtheorem{proposition}[lemma]{Proposition}
\newtheorem{corollary}[theorem]{Corollary}
\newtheorem{problem}[lemma]{Problem}
\newtheorem{definition}[lemma]{Definition}
\newcommand{\whp}{w.h.p.}
\newcommand{\D}{\mathcal D}
\newcommand{\U}{\mathcal U}
\newcommand{\poly}{\text{poly}}
\definecolor{RED}{rgb}{1,0,0}\definecolor{BLUE}{rgb}{0,0,1} 
\title{Testing Graphs against an Unknown Distribution\footnote{A preliminary version of this paper has appeared in the Proceedings of STOC '19.}}
\author{Lior Gishboliner \thanks{School of Mathematics, Tel Aviv University, Tel Aviv 69978, Israel. Email: liorgis1@post.tau.ac.il. Supported in part by ERC Starting Grant 633509.}
\and Asaf Shapira \thanks{
School of Mathematics, Tel Aviv University, Tel Aviv 69978, Israel.
Email: asafico$@$tau.ac.il. Supported in part by ISF Grant 1028/16 and ERC Starting Grant 633509.}
}
\date{}
\begin{document}

\maketitle

\begin{abstract}
The area of graph property testing seeks to understand the relation between the global properties of a graph and its local statistics.
In the classical model, the local statistics of a graph is defined relative to a uniform distribution over the graph’s vertex set.
A graph property ${\cal P}$ is said to be testable if the local statistics of a graph can allow one to distinguish between graphs satisfying ${\cal P}$ and those that are far from satisfying it.

Goldreich recently introduced a generalization of this model in which one endows the vertex set of the input graph with an arbitrary
and unknown distribution, and asked which of the properties that can be tested in the classical model can also be tested in this more general setting.
We completely resolve this problem by giving a (surprisingly ``clean'') characterization of these properties. To this end, we prove a removal lemma
for vertex weighted graphs which is of independent interest.

\end{abstract}


\section{Introduction}\label{sec:intro}

\subsection{Background and the main result}

Property testers are fast randomized algorithms whose goal is to distinguish (with high probability, say, $2/3$) between objects
satisfying some fixed property ${\cal P}$ and those that are $\varepsilon$-far from satisfying it. Here, $\varepsilon$-far means that
an $\varepsilon$-fraction of the input object should be modified in order to obtain an object satisfying ${\cal P}$.
The study of such problems originated in the
seminal papers of Rubinfeld and Sudan \cite{Sudan}, Blum, Luby and Rubinfeld \cite{BLR}, and Goldreich, Goldwasser and Ron \cite{GGR}.
Problems of this nature have been studied in so many areas that it will be impossible to survey them here. Instead, the reader
is referred to the recent monograph \cite{Goldreich1} for more background and references. While this area studies questions
in theoretical computer science, it has several strong connections with central problems in extremal combinatorics, most notably to the
regularity method and the removal lemma, see Subsection \ref{subsec:LS}. 

The classical property testing model assumes that one can uniformly sample entries of the input.
In {\em distribution-free} testing, one assumes that the input is endowed with some {\em arbitrary and unknown} distribution ${\cal D}$,
which also affects the way one defines the distance to satisfying a property.
As discussed in \cite{Goldreich_VDF}, one motivation for this model is that it can handle settings in which one cannot produce uniformly distributed entries from the input. Another motivation is that the distribution ${\cal D}$ can assign higher weight/importance to parts of the input which we want to have higher impact on the distance to satisfying the given property.
Until very recently, problems of this type were studied almost exclusively in the setting of testing properties of functions,
see \cite{CLSSX,CX,DR,Glasner_Servedio,HK}. Let us mention that distribution-free testing is similar in spirit to the celebrated PAC learning model of Valiant \cite{Valiant}, see also the discussion in \cite{Ron}.

Our investigation here concerns a distribution-free variant of the
{\em adjacency matrix model}, also known as the {\em dense graph model}.
The adjacency matrix model was first defined and studied in
\cite{GGR}, where the area of property testing was first introduced.
This model has been extensively studied in the past two decades, see
Chapter $8$ of \cite{Goldreich1}. For a selected (but certainly not
comprehensive) list of works on the dense graph model of property
testing, see \cite{AFNS,GR_POT,GT}.

Instead of defining the adjacency matrix model of \cite{GGR}, let us
directly define its distribution-free variant which was introduced
recently
by Goldreich \cite{Goldreich_VDF}. Since the distribution in this
model is over the input's vertices, it is called the
{\em Vertex-Distribution-Free} (VDF) model\footnote{Goldreich
suggested to study variants of this model in other settings (such as
bounded degree graphs \cite{GR_bounded}) as well. For brevity,
we will use the term ``VDF model'' to refer to the ``VDF variant of
the adjacency matrix model''.}.
The input to the algorithm is a graph $G$ and some {\em arbitrary and unknown}
distribution ${\cal D}$ on $V(G)$.
We will thus usually refer to the input as the pair $(G,{\cal D})$.
For a pair of graphs $G_1,G_2$ on the same vertex-set $V$, and for a distribution $\mathcal{D}$ on $V$, the (edit) distance between $G_1$ and $G_2$ with respect to $\mathcal{D}$ is defined as
$\sum_{\{x,y\} \in E(G_1) \triangle E(G_2)}{\mathcal{D}(x)\mathcal{D}(y)}$. We say that $(G,{\cal D})$ is $\varepsilon$-far from satisfying a graph property\footnote{A {\em graph property is simply a family of graphs closed under isomorphism.}} ${\cal P}$ if for every $G' \in {\cal P}$, the distance between $G$ and $G'$ with respect to $\mathcal{D}$ is at least $\varepsilon$.
A tester for a graph property $\mathcal{P}$ is an algorithm that receives as input a pair $(G,\mathcal{D})$ and a proximity parameter $\varepsilon$, and distinguishes with high probability (say $\frac{2}{3}$) between the case that $G$ satisfies $\mathcal{P}$ and the case that $(G,\mathcal{D})$ is $\varepsilon$-far from $\mathcal{P}$. The algorithm has access to a device that produces random vertices from $G$ distributed according to ${\cal D}$.
The only\footnote{Note that the algorithm does not receive
$|V(G)|$ as part of the input.} other way the algorithm can access $G$ is by performing ``edge queries'' of the form ``is $(u,v)$ an edge of $G$?''.
We say that property ${\cal P}$ is {\em testable} in the VDF model if there is a function $q(\varepsilon)$ and a tester for $\mathcal{P}$
that always performs a total number of at most $q(\varepsilon)$ vertex samples and edge queries to the input. We stress again that ${\cal D}$ is unknown to the tester, so (in particular) that $q$ should be {\em independent} of ${\cal D}$.
The function $q$ is sometimes referred to as the sample (or query) complexity of the tester.
A tester has 1-sided error if it always accepts an input satisfying ${\cal P}$. Otherwise it has 2-sided error.

Suppose we assume that in the VDF model, the distribution ${\cal D}$ is restricted to be the uniform distribution; in particular, the distance between $n$-vertex graphs $G,G'$ (on the same vertex-set) is $|E(G) \triangle E(G')|/n^2$, and $G$ is $\varepsilon$-far from ${\cal P}$ if one needs to change at least $\varepsilon n^2$ edges to turn $G$ into a graph satisfying ${\cal P}$.
In this paper we will refer to this model as the {\em standard model}. This model is ``basically'' equivalent to the {\em adjacency matrix model}, which was introduced in \cite{GGR}. We refer the reader to \cite{Goldreich_VDF} for a discussion on the subtle differences between the {\em adjacency matrix model}
and the above defined standard model\footnote{Just as an example, in \cite{GGR} the tester ``knows'' $|V(G)|$ while in the VDF model (and thus also in the standard model) \nolinebreak it \nolinebreak does \nolinebreak not.}.

A very elegant result proved in \cite{Goldreich_VDF}, states that if ${\cal P}$ is testable in the VDF model then it is testable
in the standard model with one-sided error.
A natural follow-up question, raised by Goldreich in \cite{Goldreich_VDF}, asks whether every property which is testable with one-sided error in the standard model, is also testable in the VDF model.
A characterization of the properties testable with one-sided error in the standard model was given in \cite{AS_hereditary}, where it was shown that
these are precisely the {\em semi-hereditary} properties (see \cite{AS_hereditary} for the definition of this term).
We show (see Proposition \ref{prop:hard_hereditary}), that if $\mathcal{P}$ is testable in the VDF model then $\mathcal{P}$ is hereditary\footnote{A graph property is hereditary if it is closed under removal of vertices.}. Since there are properties which are semi-hereditary but not hereditary, this implies a negative answer to Goldreich's question.
Thus, it is natural to ask the following revised version of Goldreich's question:

\begin{problem}\label{prob:gold}
Are all hereditary graph properties testable in the VDF model?
\end{problem}

It might be natural to guess\footnote{This was at least our initial guess.} that every hereditary property is testable in the VDF model, the justification
being that all lemmas that were used in \cite{AS_hereditary} should also hold for weighted graphs. As it turns out, this is indeed the case. However, putting all these lemmas together does not seem to work in the VDF model.
As our main result, Theorem \ref{thm:main} below, shows, it is no coincidence that the proof technique of \cite{AS_hereditary} does not carry over as is to the weighted setting.

We start with an important definition. Let us say that a graph property $\mathcal{P}$ is {\em extendable} if for every graph $G$ satisfying $\mathcal{P}$ there is a graph $G'$ on $|V(G)|+1$ vertices which satisfies $\mathcal{P}$ and contains $G$ as an induced subgraph. In other words, $\mathcal{P}$ is extendable if whenever $G$ is a graph satisfying $\mathcal{P}$ and $v$ is a ``new" vertex (i.e. $v \notin V(G)$), one can connect $v$ to $V(G)$ in such a way that this larger graph will also satisfy $\mathcal{P}$. Note that if $\mathcal{P}$ is extendable then in fact for every graph $G \in \mathcal{P}$ and for every $n > |V(G)|$, there is an $n$-vertex graph satisfying $\mathcal{P}$ which contains $G$ as an induced subgraph. Our main result in this paper is the following:

\begin{theorem}\label{thm:main}
A graph property is testable in the VDF model if and only if it is hereditary and extendable.
\end{theorem}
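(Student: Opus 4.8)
The plan is to prove the two implications of the theorem separately.

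\medskip
\noindent\emph{Necessity.} That testability in the VDF model implies that $\mathcal{P}$ is hereditary is precisely Proposition~\ref{prop:hard_hereditary}, so it remains to prove that a testable $\mathcal{P}$ must be extendable. Suppose it is not, and fix a graph $G_0\in\mathcal{P}$ on $k$ vertices witnessing this: no $(k{+}1)$-vertex graph in $\mathcal{P}$ contains $G_0$ as an induced subgraph. Let $G_1$ be $G_0$ with one extra isolated vertex $v_0$ added, and for $\delta\in(0,\tfrac12]$ let $\mathcal{D}_\delta$ assign weight $\delta$ to $v_0$ and weight $(1-\delta)/k$ to each vertex of $G_0$. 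Then $(G_0,\mathrm{uniform})$ satisfies $\mathcal{P}$, while $(G_1,\mathcal{D}_\delta)$ is $\varepsilon_0$-far from $\mathcal{P}$ with $\varepsilon_0:=1/(4k^2)$: indeed, any $G'\in\mathcal{P}$ on $V(G_1)$ must disagree with $G_1$ on at least one pair inside $V(G_0)$ — otherwise $G'[V(G_0)]=G_0$ and $G'$ would be a $(k{+}1)$-vertex graph in $\mathcal{P}$ containing $G_0$ as an induced subgraph — and one such disagreement already costs at least $((1-\delta)/k)^2\ge\varepsilon_0$. Now assume $\mathcal{P}$ has a tester with query complexity $q(\cdot)$, run it with proximity parameter $\varepsilon_0$, and set $\delta:=1/(100\,q(\varepsilon_0))$. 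On input $(G_1,\mathcal{D}_\delta)$ the probability that the tester ever samples $v_0$ is at most $q(\varepsilon_0)\cdot\delta\le 1/100$; and conditioned on this not happening, its samples are i.i.d.\ uniform on $V(G_0)$ and all edge queries are answered as in $G_0$, so (by the obvious coupling) its whole run is distributed exactly as on $(G_0,\mathrm{uniform})$. Hence it accepts $(G_1,\mathcal{D}_\delta)$ with probability at least $\tfrac{99}{100}\cdot\tfrac23>\tfrac13$, contradicting that $(G_1,\mathcal{D}_\delta)$ is $\varepsilon_0$-far from $\mathcal{P}$.

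\medskip
\noindent\emph{Sufficiency.} Let $\mathcal{P}$ be hereditary and extendable and let $\mathcal{F}$ be its (possibly infinite) family of minimal forbidden induced subgraphs, so a graph lies in $\mathcal{P}$ iff it contains no induced copy of any member of $\mathcal{F}$. The proposed tester is the natural sampling tester: draw $s=s(\varepsilon)$ vertices i.i.d.\ from $\mathcal{D}$, query all pairs among them, and accept iff the resulting induced subgraph lies in $\mathcal{P}$. Since $\mathcal{P}$ is hereditary this tester is one-sided (a graph in $\mathcal{P}$ is always accepted), and all that is needed is soundness, which reduces in the usual way to the following \emph{weighted induced removal lemma}: for every $\varepsilon>0$ there are $\delta=\delta(\varepsilon)>0$ and $M=M(\varepsilon)$ such that if $(G,\mathcal{D})$ is $\varepsilon$-far from $\mathcal{P}$ then there is $F\in\mathcal{F}$ with $|F|\le M$ so that, with probability at least $\delta$, $M$ i.i.d.\ $\mathcal{D}$-samples contain $|F|$ distinct vertices inducing a copy of $F$. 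Given this, setting $s=\Theta(M/\delta)$ and splitting the samples into blocks of size $M$ makes the tester reject an $\varepsilon$-far input with probability $\ge 2/3$.

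\medskip
\noindent To prove the removal lemma, fix a threshold $\eta=\eta(\varepsilon)$ and split $V(G)$ into the \emph{heavy} vertices $H=\{v:\mathcal{D}(v)\ge\eta\}$ (so $|H|\le1/\eta$) and the \emph{light} vertices $L=V(G)\setminus H$. If the total $\mathcal{D}$-weight of $L$ is less than $\varepsilon/2$, then \textbf{here extendability is used}: if $G[H]$ were in $\mathcal{P}$, extendability would allow us to grow $G[H]$ into some $G'\in\mathcal{P}$ on all of $V(G)$ that still induces $G[H]$ on $H$, so $G$ and $G'$ would differ only on $L$-incident pairs, of total weight below $\varepsilon/2<\varepsilon$ — contradicting $\varepsilon$-farness. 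Hence $G[H]\notin\mathcal{P}$, so $G[H]$ contains an induced copy of some $F\in\mathcal{F}$ with $|F|\le|H|\le1/\eta$, which is captured by $\mathcal{D}$-sampling with probability at least $\eta^{1/\eta}$, and we are done. If instead $L$ has weight at least $\varepsilon/2$, one passes to an unweighted graph: for a huge integer $N$, replace each vertex $v$ by an independent set of about $\mathcal{D}(v)N$ clones, joining two clusters completely iff the corresponding pair is an edge of $G$. One then shows that this blow-up is $\Omega_\varepsilon(1)$-far from $\mathcal{P}$, applies the infinite-family induced removal lemma for the standard model (from \cite{AFNS,AS_hereditary}) to obtain $\Omega_\varepsilon(N^{|F|})$ induced copies of some bounded $F\in\mathcal{F}$, and observes that copies meeting some light cluster more than once are negligible, so the remaining transversal copies lift back to $|F|$ distinct $\mathcal{D}$-samples in $(G,\mathcal{D})$.

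\medskip
\noindent \textbf{The main obstacle} is the last regime — heavy vertices coexisting with a light part of non-negligible weight — since the heavy clusters of the blow-up are too large for the transversality count above to discard the copies passing twice through them. My plan for this is to first condition on the adjacency patterns of light vertices toward $H$: this partitions $L$ into at most $2^{1/\eta}$ classes, freezes the heavy part into a fixed template of bounded size, and reduces the task to a removal statement for the light classes relative to that template, where the blow-up/transversality argument does go through. The delicate point — arguing that a near-optimal edit of the blow-up may be assumed to respect both the cluster structure and the heavy template up to negligible error, hence to project down to an edit of $(G,\mathcal{D})$, so that $\varepsilon$-farness of $(G,\mathcal{D})$ indeed forces the blow-up to be far — is the technical heart of the proof and the step I expect to require the most work.
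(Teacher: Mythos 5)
Your \emph{necessity} argument is correct and essentially identical to the paper's (Propositions~\ref{prop:hard_hereditary} and~\ref{prop:hard_extendable}); the only cosmetic difference is that you give the extra vertex weight $\delta\to 0$ calibrated to the query complexity, while the paper puts weight $0$ on it and relegates the positive-weight variant to a footnote. Your \emph{sufficiency} argument, however, has a genuine gap at exactly the place you flag, and the flagged step is in fact the entire mathematical content of the paper's Section~\ref{sec:positive}.

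The easy cases you handle (all heavy: use extendability to force $G[H]\notin\mathcal{P}$; all light: blow up and apply the unweighted removal lemma, discarding the negligibly many non-transversal copies) are fine, and the all-light case is essentially the paper's Lemma~\ref{lem:reduction}/Lemma~\ref{lem:no_high_weight_vertices_tester}. But the hard regime — heavy vertices together with a light part of non-negligible weight — is where your sketch stops being a proof. Two things are off. First, the ``delicate point'' you name (that an edit of the blow-up can be pushed down to an edit of $(G,\mathcal{D})$, so that $\varepsilon$-farness of $(G,\mathcal{D})$ forces the blow-up to be $\varepsilon$-far) is actually the \emph{easy} part: the paper's Lemma~\ref{lem:reduction} proves it in a few lines by taking a random representative from each blow-up cluster, and the argument is oblivious to cluster structure and heavy templates. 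Second, and more seriously, the genuine obstacle is hidden in the clause ``where the blow-up/transversality argument does go through.'' It does not, because of a circularity you do not break. You fix one threshold $\eta=\eta(\varepsilon)$, condition the light part on adjacency to $H$ (at most $2^{1/\eta}$ classes), and want to run a regularity/removal argument on the light part; the transversality count needs each light cluster $V_v$ (of size $<\eta N$) to be much smaller than the resulting regularity clusters, whose size is roughly $N_L/T$ with $T=T(\varepsilon,2^{1/\eta})$. This requires $\eta\cdot T(\varepsilon,2^{1/\eta})\ll\varepsilon$, and since $T$ grows tower-fast in $1/\eta$, no single choice of $\eta$ satisfies this. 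A light vertex whose weight is just below $\eta$ can therefore swallow an entire regularity cluster, which is exactly the failure mode the paper describes in Subsection~\ref{subsec:intu}: ``clusters of the regular partition might contain only a single vertex of high weight.''

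The paper's resolution is not a blow-up argument at all. It proves a bespoke weighted regularity lemma, Lemma~\ref{lem:reg_main}, whose proof (Lemma~\ref{lem:iterations}) iterates the threshold: it defines a rapidly growing sequence $s_1<s_2<\cdots$ and peels off vertices by weight ranges $[1/s_{i+1},1/s_i)$, discarding the first range of small total weight and thereby opening a huge gap between the minimum weight of a ``heavy'' vertex ($\ge 1/s$) and the maximum weight of a ``light'' vertex ($<1/S''(s)$, with $S''$ chosen to dominate the downstream regularity bound). This iterated choice, not a single fixed $\eta$, is what breaks the circularity and lets properties (ii), (iv) and (v) from the overview coexist. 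You would need this idea (or a substitute) to make your Case~2 go through; your conditioning-on-the-heavy-template idea is in the right spirit (it mirrors property~(v) and the paper's embedding-scheme set $A_K=X$), but without the iterated threshold the quantitative side collapses. As you yourself say, this ``step I expect to require the most work'' is unresolved, and it is precisely the step that makes the theorem a theorem rather than a routine reduction to the unweighted case.
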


It is interesting to compare the above (rather) simple characterization of the
properties that are testable in the VDF model, with the (very) complicated characterization of \cite{AFNS} of the properties that are testable
in the standard model.

Let us mention some immediate consequences of Theorem \ref{thm:main}.
Since a graph cannot contain both an isolated vertex and a vertex connected to all other vertices, we infer that
for every fixed $H$ the (hereditary) property of being induced $H$-free is extendable. We thus infer that:

\begin{corollary}\label{cor:indHfree}
The property of being induced $H$-free is testable in the VDF model for every fixed $H$.
\end{corollary}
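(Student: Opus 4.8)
The plan is to derive the corollary essentially immediately from Theorem~\ref{thm:main}: since being induced $H$-free is manifestly hereditary (deleting a vertex cannot create a new induced copy of $H$), it only remains to check that this property is \emph{extendable}. I would first set aside the degenerate case $|V(H)|=1$ and assume henceforth that $H$ has at least two vertices.

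The heart of the matter is the elementary observation already flagged in the text: a graph on at least two vertices cannot contain both an isolated vertex and a vertex adjacent to all of its other vertices, since such a pair would have to be non-adjacent and adjacent at once. Applying this to $H$, at least one of the following holds (possibly both): (i) $H$ has no isolated vertex, or (ii) $H$ has no vertex adjacent to all of its other vertices. Now, given any induced-$H$-free graph $G$ and a new vertex $v\notin V(G)$, I would build $G'$ on $V(G)\cup\{v\}$ by making $v$ isolated if (i) holds, and by joining $v$ to every vertex of $G$ if (i) fails (so that (ii) holds). In either case $|V(G')|=|V(G)|+1$ and $G$ is an induced subgraph of $G'$, so it suffices to prove that $G'$ is again induced $H$-free.

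For this final step, I would take any $S\subseteq V(G')$ with $G'[S]\cong H$. If $v\notin S$ then $S\subseteq V(G)$ and $G'[S]=G[S]\cong H$, contradicting the choice of $G$. If $v\in S$ then, since $|S|=|V(H)|\ge 2$, the vertex $v$ is an isolated vertex of $G'[S]$ when (i) holds --- forcing $H$ to contain an isolated vertex --- and is a vertex of $G'[S]$ adjacent to all the remaining $|V(H)|-1$ vertices of $S$ when (i) fails --- forcing $H$ to contain a vertex adjacent to all of its other vertices; both outcomes contradict the case assumption. Hence $G'$ is induced $H$-free, so the property is extendable, and Theorem~\ref{thm:main} yields the corollary.

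I do not expect any genuine obstacle here: granted Theorem~\ref{thm:main}, the corollary reduces to this short combinatorial check, and the one point needing care is the recurring use of $|V(H)|\ge 2$, which is exactly what makes the dichotomy (i)/(ii) nonvacuous and what guarantees that a degree-$0$ (respectively, full-degree) vertex inside the copy $G'[S]$ really witnesses an isolated (respectively, universal) vertex of $H$.
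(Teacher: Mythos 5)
Your proof is correct and follows the same argument the paper gives (compressed into a single sentence there): $H$ cannot have both an isolated vertex and a universal vertex, so extending any induced $H$-free graph by adding an isolated (respectively, universal) vertex preserves induced $H$-freeness, establishing extendability, while hereditariness is immediate. The one case you flag but leave unresolved, $|V(H)|=1$, is harmless: under the usual convention that graphs have at least one vertex, induced $K_1$-freeness is the empty property, which is vacuously hereditary and extendable, so Theorem~\ref{thm:main} still applies.
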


It is also clear that the property of being $H$-free is extendable if and only if $H$ has no isolated vertices.
We thus infer that:

\begin{corollary}\label{cor:Hfree}
The property of being $H$-free is testable in the VDF model if and only if $H$ has no isolated vertices.
\end{corollary}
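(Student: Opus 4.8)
The plan is to derive Corollary~\ref{cor:Hfree} directly from Theorem~\ref{thm:main}, so the whole task reduces to deciding, for the property $\mathcal{P}$ of being $H$-free (containing no copy of $H$ as a, not necessarily induced, subgraph), whether $\mathcal{P}$ is hereditary and extendable. First I would note that $\mathcal{P}$ is always hereditary: deleting a vertex from a graph cannot create a copy of $H$, so an $H$-free graph remains $H$-free after vertex deletions. By Theorem~\ref{thm:main} it then remains to prove that $\mathcal{P}$ is extendable if and only if $H$ has no isolated vertices.

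For the ``if'' direction, assume $H$ has no isolated vertex. Given an $H$-free graph $G$ and a new vertex $v\notin V(G)$, I would take $G'$ to be $G$ together with $v$ added as an isolated vertex, so that $G$ is an induced subgraph of $G'$. To see $G'$ is $H$-free, consider a hypothetical embedding $\phi\colon V(H)\to V(G')$, i.e.\ an injective map sending edges of $H$ to edges of $G'$. If $v\notin\Image(\phi)$ then $\phi$ is an embedding of $H$ into $G$, contradicting $H$-freeness of $G$; and if $\phi(u)=v$ for some $u\in V(H)$, then $u$ has a neighbour $w$ in $H$ (as $H$ has no isolated vertex), so $\{v,\phi(w)\}=\{\phi(u),\phi(w)\}$ is an edge of $G'$ incident to $v$, contradicting that $v$ is isolated. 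Hence $\mathcal{P}$ is extendable.

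For the ``only if'' direction, assume $H$ has an isolated vertex and let $H_0$ be $H$ with one such vertex removed, so $|V(H_0)|=|V(H)|-1$. I would take $G=H_0$, which is $H$-free because it has fewer vertices than $H$ and hence admits no injective map from $V(H)$. Now any graph $G'$ on $|V(G)|+1=|V(H)|$ vertices containing $G$ as an induced subgraph consists of a copy of $H_0$ together with one extra vertex $v$; mapping the removed isolated vertex of $H$ to $v$ extends the embedding of $H_0$ to an embedding of $H$ into $G'$ (the isolated vertex imposes no edge constraints), so $G'\notin\mathcal{P}$. Thus $\mathcal{P}$ admits no valid one-vertex extension of $G$, so it is not extendable. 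Together with heredity and Theorem~\ref{thm:main} this yields the corollary.

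There is no genuine obstacle here beyond Theorem~\ref{thm:main} itself, which we may assume; the only point worth a second glance is the degenerate case where $H$ is edgeless, say $H=\overline{K_m}$, in which ``$H$-free'' just means ``having fewer than $m$ vertices'', and the argument above goes through with $G$ taken to be any graph on $m-1$ vertices and $H_0=\overline{K_{m-1}}$.
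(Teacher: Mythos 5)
Your proof is correct and follows exactly the route the paper intends: invoke Theorem~\ref{thm:main} and observe that $H$-freeness is always hereditary and is extendable precisely when $H$ has no isolated vertex. The paper simply asserts this equivalence as "clear"; your argument (add an isolated vertex for the "if" direction, take $G = H_0$ for the "only if") is the natural fleshing-out of that remark.
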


It is easy to see that most (natural) hereditary graph properties are extendable, so Theorem \ref{thm:main} immediately
implies that they are all testable in the VDF model. These include the properties of being Perfect, Interval, Chordal and $k$-Colorable.
In the other direction, Theorem \ref{thm:main} implies that if $H$ has an isolated vertex then $H$-freeness is not testable in the VDF model.
If one is interested in a more ``natural'' non-extendable hereditary property, then it is not hard to see that another such example is the
property ${\cal P}$ of being induced $\{A,B\}$-free, where $A$ (resp. $B$) is the graph obtained from the $2$-edge path $P_2$ by adding a new vertex which is adjacent to all $3$ vertices of $P_2$ (resp. not adjacent to any vertex of $P_2$). It is easy to see that $C_5$ satisfies ${\cal P}$
but is not extendable. It was proved in \cite{Goldreich_VDF} that the properties of being Hamiltonian, Eulerian and Connected are not testable in the VDF model. Those three results follow immediately from our Theorem \ref{thm:main} since these properties are not hereditary.

\subsection{The combinatorial interpretation of Theorem \ref{thm:main}}\label{subsec:LS}

Let us discuss the combinatorial implications of Theorem \ref{thm:main} and its relation to other results in the area of extremal combinatorics. 
The famous triangle removal lemma of Ruzsa and Szemer\'edi \cite{RuzsaSz76} states that if a graph $G$ is $\varepsilon$-far from being triangle free (with respect to the uniform distribution), then a (uniform) sample of $s(\varepsilon)$ vertices from $G$ contains a triangle with probability at least $\frac{2}{3}$.
We refer the reader to \cite{CF1} for more background on this lemma and its variants.
The result of \cite{AS_hereditary} mentioned above, can be thought of as a generalization of this lemma to arbitrary hereditary properties. It can be stated
as saying that for every hereditary graph property $\mathcal{P}$ there is a function $s_{\mathcal{P}} : (0,1) \rightarrow \mathbb{N}$ such that the following holds for every $\varepsilon > 0$. If a graph $G$ is $\varepsilon$-far from $\mathcal{P}$ (with respect to the uniform distribution) then a (uniform) sample
of $s_{\mathcal{P}}(\varepsilon)$ vertices from $G$ induces a graph not satisfying $\mathcal{P}$ with probability at least $2/3$.

To prove (the ``if" direction of) Theorem \ref{thm:main}, we will actually prove the following combinatorial statement, which can be thought of as a vertex-weighted version of the graph removal lemma.

\begin{theorem}\label{statement:ours}
	For every hereditary and extendable graph property $\mathcal{P}$ there is a function $s_{\mathcal{P}} : (0,1) \rightarrow \mathbb{N}$ such that the following holds for every $\varepsilon > 0$ and for every vertex-weighted graph $(G,\mathcal{D})$ which is $\varepsilon$-far from $\mathcal{P}$. Let $u_1,\dots,u_s$, $s = s_{\mathcal{P}}(\varepsilon)$, be a sequence of random vertices of $G$, sampled according to $\mathcal{D}$ and independently. Then $G[\{u_1,\dots,u_s\}]$ does not satisfy $\mathcal{P}$ with probability at least $\frac{2}{3}$.
\end{theorem}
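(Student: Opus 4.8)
The plan is to establish Theorem~\ref{statement:ours} by adapting the regularity-based proof of \cite{AS_hereditary} for hereditary properties to the vertex-weighted setting, using extendability at the one point where the weighted argument would otherwise fail. Write $\mathcal{P}$ as the family of graphs containing no induced copy of any member of a (possibly infinite) family $\mathcal{F}$ of graphs. It is enough to prove a \emph{weighted induced removal lemma for infinite families}: for every $\mathcal{F}$ and $\varepsilon>0$ there are a \emph{finite} subfamily $\mathcal{F}_0\subseteq\mathcal{F}$ and $\delta=\delta(\varepsilon)>0$ so that if $(G,\mathcal{D})$ is $\varepsilon$-far from $\mathcal{P}$ then the total $\mathcal{D}$-weight $\sum_{S}\prod_{v\in S}\mathcal{D}(v)$ of vertex sets $S$ with $G[S]\in\mathcal{F}_0$ is at least $\delta$. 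Granting this, a $\mathcal{D}$-sample of $r:=\max_{F\in\mathcal{F}_0}|V(F)|$ vertices contains such an $S$ with probability at least $\delta'(\varepsilon)$, and partitioning a sample of $s=s_{\mathcal{P}}(\varepsilon)=O(r/\delta')$ vertices into disjoint blocks of size $r$ amplifies this to $2/3$; since $\mathcal{P}$ is hereditary, one forbidden induced copy inside the sample already gives $G[\{u_1,\dots,u_s\}]\notin\mathcal{P}$.

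Extendability enters only through the following \emph{robustness} fact: if $\mathcal{P}$ is hereditary and extendable and $(G,\mathcal{D})$ is $\varepsilon$-far from $\mathcal{P}$, then for every $U\subseteq V(G)$ with $\mathcal{D}(U)\le\varepsilon/2$ the instance $(G-U,\mathcal{D}|_{V(G)\setminus U})$ is $(\varepsilon/2)$-far from $\mathcal{P}$. Indeed, if some $H\in\mathcal{P}$ on $V(G)\setminus U$ were within $\mathcal{D}$-distance $\varepsilon/2$ of $G-U$, then by extendability we could add the vertices of $U$ to $H$ one at a time and obtain some $H^+\in\mathcal{P}$ on all of $V(G)$; the total cost of the edits turning $G$ into $H^+$ that are incident to $U$ is at most $\mathcal{D}(U)\le\varepsilon/2$, so $(G,\mathcal{D})$ would be $\varepsilon$-close to $\mathcal{P}$. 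In particular all vertices of $\mathcal{D}$-weight $0$ may be discarded at no cost, and any further vertex set of $\mathcal{D}$-weight at most $\varepsilon/2$ may be discarded while keeping the instance $(\varepsilon/2)$-far. This robustness is exactly what non-extendable properties lack, and it is what keeps all forbidden induced configurations from being concentrated on a set of negligible $\mathcal{D}$-weight; the counterexamples discussed after Theorem~\ref{thm:main} are precisely instances where this concentration does occur.

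The main argument then proceeds as in \cite{AS_hereditary}. One first proves a $\mathcal{D}$-weighted version of the strong (``regularity within regularity'') regularity lemma used there, in which part weights, densities and $\varepsilon$-regularity are all measured with respect to $\mathcal{D}$: the parts have nearly equal $\mathcal{D}$-weight, each vertex of $\mathcal{D}$-weight exceeding the typical part weight forms a singleton part, and there is an exceptional part $V_0$ with $\mathcal{D}(V_0)\le\varepsilon/4$. Apply this to $(G,\mathcal{D})$, then use the robustness fact to delete $V_0$: the resulting instance is $(\varepsilon/2)$-far from $\mathcal{P}$, and after renormalizing $\mathcal{D}$ every remaining part has $\mathcal{D}$-weight bounded below in terms of $\varepsilon$ alone, while the regularity data is unaffected. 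Clean this graph — delete the edges inside sparse and inside irregular pairs, make dense pairs complete — at $\mathcal{D}$-cost below $\varepsilon/4$, obtaining a graph $G'$ on the same vertex set; since the deleted-$V_0$ instance is $(\varepsilon/2)$-far from $\mathcal{P}$, we get $G'\notin\mathcal{P}$, so $G'$ contains an induced copy of some $F\in\mathcal{F}$. As in \cite{AS_hereditary}, the fact that $G'$ is, apart from the surviving quasirandom pairs, a blow-up of a reduced colored graph on boundedly many parts forces $F$ to come from a \emph{finite} subfamily $\mathcal{F}_0\subseteq\mathcal{F}$, via a copy meeting each (sufficiently refined) part at most once. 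Finally a $\mathcal{D}$-weighted counting lemma — available because regularity is now phrased in terms of $\mathcal{D}$-weight, so that ``almost all vertices of a part'' becomes ``almost all of the $\mathcal{D}$-weight of a part'' — shows that varying each of the at most $r$ vertices of this copy within its own part produces induced $F$-copies of total $\mathcal{D}$-weight at least the product of the relevant part weights and densities, which is bounded below by some $\delta(\varepsilon)>0$. This is the desired weight-abundance.

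Two steps should be the crux. The conceptual one is to see that extendability is needed only for the robustness fact, and that this single use is genuinely necessary: without it the reduced weighted graph can be forced to have an exceptional part whose $\mathcal{D}$-weight is far below the regularity parameter, with every forbidden induced copy running through that part, so that no $\delta(\varepsilon)$-abundant copy exists — which is how the non-extendable counterexamples are built. The technical one is to carry the \cite{AS_hereditary} machinery out in a completely $\mathcal{D}$-weighted form — the strong regularity lemma, the reduction to a finite forbidden subfamily via blow-ups of the reduced colored graph, and the counting lemma — and in particular to accommodate singleton heavy parts and parts whose internal $\mathcal{D}$-weight is very non-uniform; the key point that makes this go through is that, once everything is measured in $\mathcal{D}$-weight, a $\mathcal{D}$-distributed sample automatically lands on $\mathcal{D}$-typical vertices and so detects the abundant copy.
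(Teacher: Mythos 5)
Your high-level outline matches the paper's strategy closely: extendability enters exactly through the robustness fact you state (the paper phrases it as ``$G$ is $\frac{3\varepsilon}{4}$-far from any $G'$ with $G'[X\cup Y]\in\mathcal{P}$'' and uses it the same way), heavy vertices are isolated as singletons, the remaining clusters are cleaned and reduced to a colored scheme, and a weighted counting lemma gives abundance of some forbidden $F$ of size bounded in terms of the number of parts. So the conceptual use of extendability is correctly identified.

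The gap is in what you call the ``technical crux'' and then wave past. Your plan requires each non-singleton part $P_i$ to be subdivided, via Ramsey/Tur\'an inside $P_i$ (the paper's Lemma~\ref{lem:Turan_Ramsey}), into roughly $t=\Psi_{\mathcal{F}}(m)$ mutually regular pieces of $\mathcal{D}$-weight bounded below; this only works if \emph{every vertex in $P_i$ has weight much smaller than $\mathcal{D}(P_i)/t$}. A single threshold ``heavy means weight above typical part weight'' cannot achieve this: the cluster weights produced by the strong regularity lemma and the Ramsey parameter $t$ both depend on the number of parts $m=|X|+r$, which depends on how many singletons you declared, which depends on the threshold --- a genuine circularity, not a bookkeeping issue. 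The paper breaks this cycle with a pigeonhole iteration over a rapidly decreasing sequence of thresholds $1/s_1>1/s_2>\cdots$ (Lemma~\ref{lem:iterations}): it finds an index $i\le 2/\varepsilon$ where the ``buffer band'' $X_i$ of vertices with weight in $[1/s_i,1/s_{i-1})$ has total $\mathcal{D}$-weight at most $\varepsilon/2$, takes $X=X_1\cup\cdots\cup X_{i-1}$, discards $Z'=X_i$, and then the recurrence $s_i=S''(s_{i-1})$ with $S''(s)\ge \frac{2S'(s)}{\varepsilon}\Psi\bigl(s+S'(s)\bigr)$ is \emph{precisely} what guarantees that every remaining light vertex has weight at most $\frac{1}{\Psi(|X|+r)}\cdot\mathcal{D}(Q_i)$. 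Without this threshold-selection mechanism, Item~(b) of Lemma~\ref{lem:iterations} --- and hence the existence of the abundant copy --- is not established. Your closing remark, that a $\mathcal{D}$-sample ``automatically lands on $\mathcal{D}$-typical vertices and so detects the abundant copy,'' is true but addresses detection rather than existence, which is where the real work lies.
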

\noindent
The following similar-looking result\footnote{We note that the results of \cite{AT} and \cite{LS} are more general. The authors of \cite{LS} actually prove that the conclusion of Theorem \ref{statement:LS} holds for all graphons. The authors of \cite{AT} prove extensions of Theorem \ref{statement:LS} in several directions, including a version for uniform hypergraphs, and a strengthening in which the notion of testability is replaced with the stronger notion of repairability.} was (implicitly) proved by Austin and Tao \cite{AT} and Lov\'{a}sz and \nolinebreak Szegedy \nolinebreak \cite{LS}.

\begin{theorem}[\cite{AT,LS}]\label{statement:LS}
	For every hereditary graph property $\mathcal{P}$ there is a function $s_{\mathcal{P}} : (0,1) \rightarrow \mathbb{N}$ such that the following holds for every $\varepsilon > 0$ and for every vertex-weighted graph $(G,\mathcal{D})$ which is $\varepsilon$-far from $\mathcal{P}$. Let $u_1,\dots,u_s$, $s = s_{\mathcal{P}}(\varepsilon)$, be a sequence of random vertices of $G$, sampled according to $\mathcal{D}$ and independently. Construct a graph $S$ on $[s]$ by letting $\{i,j\} \in E(S)$ if and only if $\{u_i,u_j\} \in E(G)$. Then $S$ does not satisfy $\mathcal{P}$ with probability at least $\frac{2}{3}$.
\end{theorem}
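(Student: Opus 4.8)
The plan is to derive the theorem from a vertex‑weighted version of the induced removal lemma and to establish that version by a compactness argument in the space of graphons. Let $\mathcal{F}$ be the (possibly infinite) family of minimal forbidden induced subgraphs of $\mathcal{P}$, so a finite graph $S$ satisfies $\mathcal{P}$ iff it contains no induced copy of a member of $\mathcal{F}$. To a vertex‑weighted graph $(G,\mathcal{D})$ associate the step graphon $W_{(G,\mathcal{D})}$ obtained by partitioning $[0,1]$ into intervals of lengths $\mathcal{D}(v)$ and putting value $1$ on $I_u\times I_v$ exactly when $uv\in E(G)$; then the sample graph $S$ of the theorem is distributed exactly as the $W_{(G,\mathcal{D})}$‑random graph $\mathbb{G}(s,W_{(G,\mathcal{D})})$, and for any fixed $F$ the quantity $t_{\mathrm{ind}}(F,U):=\Pr[\mathbb{G}(|V(F)|,U)\cong F]$ is its induced density in $U$. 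The reduction is: it suffices to prove that for every $\varepsilon>0$ there are $\delta>0$ and $k_0\in\mathbb{N}$, depending only on $\mathcal{P}$ and $\varepsilon$, such that if $(G,\mathcal{D})$ is $\varepsilon$‑far from $\mathcal{P}$ then $t_{\mathrm{ind}}(F,W_{(G,\mathcal{D})})\ge\delta$ for some $F\in\mathcal{F}$ with $|V(F)|\le k_0$. Granting this, take $s=s_\mathcal{P}(\varepsilon)$ large in terms of $k_0$ and $1/\delta$, split the $s$ samples into $\lfloor s/k_0\rfloor$ disjoint blocks of size $k_0$, observe that each block induces a copy of the relevant $F$ (on its first $|V(F)|$ samples) with probability at least $\delta$, and conclude that $S$ contains an induced copy of $F$ — hence $S\notin\mathcal{P}$ — with probability at least $1-(1-\delta)^{\lfloor s/k_0\rfloor}\ge 2/3$.

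For the weighted removal statement I would argue by contradiction. If it failed for some $\varepsilon$, then for each $k$ there would be a weighted graph $(G_k,\mathcal{D}_k)$, $\varepsilon$‑far from $\mathcal{P}$, with $t_{\mathrm{ind}}(F,W_k)<1/k$ for every $F\in\mathcal{F}$ with $|V(F)|\le k$, where $W_k=W_{(G_k,\mathcal{D}_k)}$. By compactness of the space of graphons under the cut distance $\delta_\square$ (Lovász–Szegedy), pass to a subsequence with $W_k\to W_\infty$; since each induced density $U\mapsto t_{\mathrm{ind}}(F,U)$ is $\delta_\square$‑continuous, $t_{\mathrm{ind}}(F,W_\infty)=0$ for all $F\in\mathcal{F}$. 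Union‑bounding over the finitely many forbidden graphs of size $\le m$, this gives that $\mathbb{G}(m,W_\infty)$ satisfies $\mathcal{P}$ almost surely for every $m$; combined with the sampling lemma (a large sample of a graphon is cut‑close to it with high probability), for large $m$ we may fix a \emph{single} graph $H\in\mathcal{P}$ on $m$ vertices with $\delta_\square(W_H,W_\infty)$ as small as we wish. Hence for large $k$ the weighted graph $W_k=W_{(G_k,\mathcal{D}_k)}$ is cut‑close to $W_H$, the graphon of a genuine member of $\mathcal{P}$.

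Turning this cut‑closeness into an actual low‑cost repair of $(G_k,\mathcal{D}_k)$ — which contradicts $\varepsilon$‑farness — is the heart of the argument and the step I expect to be the main obstacle. What is needed is a robustness lemma: if $W_{(G,\mathcal{D})}$ is cut‑close to $W_H$ for some $H\in\mathcal{P}$, then $(G,\mathcal{D})$ is \emph{not} $\varepsilon$‑far from $\mathcal{P}$, i.e.\ there is $G'\in\mathcal{P}$ on $V(G)$ with $d_{\mathcal{D}}(G,G')$ small. The intended proof is a rounding argument: cut‑closeness provides, after aligning the underlying probability spaces, a fractional assignment of the vertices of $G$ to the vertices of $H$ that approximately preserves the bipartite densities between blocks; a vertex‑weighted regularity partition of $(G,\mathcal{D})$ then lets one round this to an honest map $h\colon V(G)\to V(H)$ whose induced modification $G'$ (the pullback of $H$ along $h$) lies within $\mathcal{D}$‑distance $o(1)$ of $G$. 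The delicate point is that $\mathcal{P}$ need not be closed under duplicating vertices, so one must take $m$ large and arrange $h$ to be injective on the support of $\mathcal{D}$ — using that a large $W_\infty$‑random graph already ``hosts'' every weighted graph cut‑close to $W_\infty$ — so that $G'$ restricted to that support is literally an induced subgraph of $H$ and hence lies in $\mathcal{P}$. Making this embedding quantitative (bounding the $\mathcal{D}$‑cost of $h$ while keeping it injective on the support) is the technically heaviest part; everything else is either soft (compactness and continuity of densities) or routine (the final sampling estimate). An alternative route that sidesteps graphons is to imitate the Alon–Shapira proof of the induced removal lemma directly on $(G,\mathcal{D})$, via a vertex‑weighted strong regularity lemma and its reduced weighted graph, showing that either a bounded‑complexity ``cleaning'' of the regular partition already yields a member of $\mathcal{P}$ within $\mathcal{D}$‑distance $\varepsilon$, or the failure to clean forces a positive induced density of some bounded‑size $F\in\mathcal{F}$; but the core difficulty there — the property‑preserving, edit‑efficient cleaning step — is exactly the same.
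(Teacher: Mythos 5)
Your route is genuinely different from the paper's, but it has a real gap at exactly the point you flag as the ``heart'': the robustness lemma asserting that if $W_{(G,\mathcal{D})}$ is cut-close to $W_H$ for some $H\in\mathcal{P}$ then $(G,\mathcal{D})$ is not $\varepsilon$-far from $\mathcal{P}$ is not a technicality to be rounded away --- it \emph{is} the nontrivial content of the Lov\'asz--Szegedy theorem you are trying to reprove, and your sketch of it does not go through. Concretely: (i) cut-closeness to the $0/1$ step graphon $W_H$ only converts to an $L_1$ (edit-type) bound by summing the cut norm over the $|V(H)|^2$ rectangles, i.e.\ after losing a factor $m^2$ with $m=|V(H)|$; but $m$ must be taken large, since the sampling lemma only gives $\delta_\square(W_H,W_\infty)=O(1/\sqrt{\log m})$, so no useful edit bound on $(G_k,\mathcal{D}_k)$ follows. (ii) Even granting an alignment, the repaired graph $G'$ you build is the pullback of $H$ along a map $h\colon V(G)\to V(H)$, i.e.\ a blowup of an induced subgraph of $H$, and a hereditary $\mathcal{P}$ need not contain blowups of its members; your proposed fix --- making $h$ injective on the support of $\mathcal{D}$ --- is impossible whenever that support has more than $m$ vertices, which is the typical case (and letting $m$ grow with $|V(G_k)|$ reintroduces problem (i) and destroys the uniformity in $k$ that the compactness argument needs). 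So the compactness, continuity-of-densities and final blocking steps are indeed soft, but the one step carrying the content is missing.

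For contrast, the paper proves Theorem \ref{statement:LS} without any graphon machinery, by a short reduction to the unweighted result of \cite{AS_hereditary}: replace each vertex $v$ by $\mathcal{D}(v)\cdot N$ clones to get an unweighted blowup $G'$, and show (Lemma \ref{lem:reduction}) that $\varepsilon$-farness is preserved --- if some $H'\in\mathcal{P}$ were $\varepsilon$-close to $G'$, picking one uniformly random clone from each blowup set yields, by heredity and linearity of expectation, a graph on $V(G)$ in $\mathcal{P}$ that is $\varepsilon$-close to $G$ under $\mathcal{D}$, a contradiction. A uniform sample from $G'$ has the same distribution as the graph $S$ in the statement (repetitions inside a blowup set are exactly why $S$ is only a blowup of $G[U]$, which is all Theorem \ref{statement:LS} claims), so the unweighted one-sided testability of hereditary properties finishes the proof. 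The lesson is that the blowup formulation of the conclusion is what makes this theorem cheap; the hard work in the paper (extendability and Lemma \ref{lem:reg_main}) is needed only for the stronger statement that $G[U]$ itself violates $\mathcal{P}$, which neither your argument nor \cite{LS} addresses.
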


Note that Theorem \ref{statement:LS} holds for all hereditary properties, while Theorem \ref{statement:ours} only holds for hereditary properties which are extendable.
Observe that the graph $S$ in Theorem \ref{statement:LS} is a blowup of the graph $G[U]$, where $U = \{u_1,\dots,u_s\}$. Thus, the difference between Theorems \ref{statement:ours} and \ref{statement:LS} is that Theorem \ref{statement:LS} only guarantees that a {\em blowup} of $G[U]$ does not satisfy $\mathcal{P}$ w.h.p., while Theorem \ref{statement:ours} guarantees the stronger assertion that $G[U]$ itself does not satisfy $\mathcal{P}$ w.h.p. This is an important difference: while Theorem \ref{statement:ours} immediately implies the existence of a VDF-tester for every hereditary and extendable property $\mathcal{P}$ (see Subsection \ref{subsec:main_result}), we do not know of any way of using Theorem \ref{statement:LS} to prove the existence of such a tester.
One natural candidate for a tester derived from Theorem \ref{statement:LS} would be the algorithm which accepts if and only if the graph $S$ (defined in Theorem \ref{statement:LS}) does not satisfy $\mathcal{P}$. It turns out, however, that this algorithm often fails to be a valid tester\footnote{For example, if $\mathcal{P} = C_5$-freeness then this tester will reject w.h.p if the input graph is a triangle with uniform vertex distribution (as the graph $S$ will typically contain the 2-blowup of a triangle, and thus contain a copy of $C_5$), even though this input graph clearly satisfies \nolinebreak $\mathcal{P}$.}.

It is worth noting that Theorem \ref{statement:LS} can be deduced from the ``unweighted" case, i.e. the result of \cite{AS_hereditary}, via a simple argument, see Lemma \ref{lem:reduction} and the discussion following it. On the other hand, the proof of Theorem \ref{statement:ours} requires several new ideas on top of those used in \cite{AS_hereditary}.

\subsection{Variants of the VDF model}

The proof of the ``only if'' part of Theorem \ref{thm:main}, showing that if $\mathcal{P}$ is either non-extendable or non-hereditary
then $\mathcal{P}$ is not testable in the VDF model, relies on allowing the input graph to have only $O(1)$ vertices (where the constant is independent of $\varepsilon$); on excluding $|V(G)|$ from the input fed to the tester; and on having distributions $\mathcal{D}$ that assign to some vertices weight $\Theta(1)$ and
to some vertices weight $o(1/|V(G)|)$. This raises the natural question of what happens if we only require the tester to work on sufficiently large graphs; or if the tester receives $|V(G)|$ as part of the input; or if we forbid $\mathcal{D}$ from assigning
very low or very high weights (as above). As the following four theorems show, either one of these variations has a dramatic effect on the model, since it then
allows {\em all} hereditary properties to be testable.

We start with the setting in which the input graph is guaranteed to be large enough. In a revised version of \cite{Goldreich_VDF}, Goldreich asked whether every hereditary property $\mathcal{P}$ is testable (in the VDF model) on graphs of order at least $M = M_{\mathcal{P}}$, for $M$ which is independent of $\varepsilon$. As we show in Proposition \ref{prop:constant_bound_on_num_of_vertices}, this turns out to be false. On the positive side, we show that under the stronger assumption that the input size is at least $M_{\mathcal{P}}(\varepsilon)$ (where $M_{\mathcal{P}} : (0,1) \rightarrow \mathbb{N}$ is a function dependent on $\mathcal{P}$), {\em all} hereditary properties are testable.

\begin{theorem}\label{theo_rest3}
	Under the promise that $|V(G)| \gg 1$, every hereditary property is testable with one-sided error in the VDF model.
\end{theorem}


D. Ron (personal communication) asked what happens if we allow testers to receive $|V(G)|$ (i.e., the number of vertices in the input graph) as part of the input\footnote{We note that in the VDF model as defined in \cite{Goldreich_VDF}, the number of vertices in the input graph is not known to the tester}. Our following theorem answers \nolinebreak this \nolinebreak question.

\begin{theorem}\label{theo_rest4}
	If testers can receive $|V(G)|$ as part of the input, then every hereditary property is testable with one-sided error in the VDF model. 	
\end{theorem}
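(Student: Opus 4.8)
The plan is to combine Theorem~\ref{theo_rest3}, which already handles large inputs, with a brute-force argument for small inputs that crucially exploits the fact that the tester is now given $n:=|V(G)|$. Fix a hereditary property $\mathcal{P}$ and $\varepsilon>0$, and let $M=M_{\mathcal{P}}(\varepsilon)$ be the threshold from Theorem~\ref{theo_rest3}, so that on inputs with $|V(G)|\ge M$ there is a tester $T_{\mathrm{large}}$ for $\mathcal{P}$ whose number of samples and queries is bounded in terms of $\varepsilon$ alone. The tester I would build first reads $n$: if $n\ge M$ it runs $T_{\mathrm{large}}$ and returns its answer, which is correct by Theorem~\ref{theo_rest3}. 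All the work is therefore in the regime $n<M$, where the input graph has a bounded number of vertices (bounded as a function of $\varepsilon$).

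In that regime I would proceed as follows. First, draw $t=t(\varepsilon)$ vertices from $\mathcal{D}$, let $W$ be the set of distinct vertices obtained, and query all $\binom{|W|}{2}\le\binom{t}{2}$ pairs inside $W$, thereby learning $G[W]$ exactly. Since a vertex $v$ is missed by all $t$ samples with probability $(1-\mathcal{D}(v))^t\le e^{-t\mathcal{D}(v)}$, a routine estimate (using $n<M$) shows that for $t$ large enough in terms of $M$ and $\varepsilon$, with probability at least $0.99$ the total $\mathcal{D}$-mass of $V(G)\setminus W$ is at most $\varepsilon/10$; condition on this. Second, using a further number of sampled pairs $(x,y)\sim\mathcal{D}\times\mathcal{D}$ depending only on $\varepsilon$, together with Hoeffding's inequality and a union bound over the (finitely many, since $|W|\le t$) graphs on the vertex set $W$, obtain with probability at least $0.99$ estimates $\widehat f(H)$ of $\sum_{\{x,y\}\subseteq W,\,\{x,y\}\in E(G[W])\triangle E(H)}\mathcal{D}(x)\mathcal{D}(y)$, accurate to within $\varepsilon/10$ simultaneously for all graphs $H$ on $W$; condition on this as well. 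Both steps cost only a function of $\varepsilon$ in samples and queries.

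The decision step is where knowing $n$ is essential. The tester introduces $n-|W|$ ``phantom'' vertices to form a set $V^{*}$ with $|V^{*}|=n$, enumerates all graphs $G'$ on $V^{*}$ with $G'\in\mathcal{P}$, and sets $\alpha$ to be the minimum of $\widehat f\bigl(G'[W]\bigr)$ over these $G'$ (with $\alpha:=+\infty$ if there is no such $G'$); it accepts if and only if $\alpha\le\varepsilon/2$. Correctness: if $G\in\mathcal{P}$ then $G[W]$ extends --- by $G$ itself --- to a graph in $\mathcal{P}$ on $n$ vertices, so $\alpha\le\widehat f(G[W])=0$ and the tester accepts. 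For soundness, observe that since $\mathcal{P}$ is hereditary and isomorphism-closed, the set of graphs on $W$ of the form $G'[W]$ with $G'\in\mathcal{P}$ on $n$ vertices is exactly the set of restrictions to $W$ of the $n$-vertex graphs in $\mathcal{P}$; hence, up to the $\pm\varepsilon/10$ error in $\widehat f$, $\alpha$ equals $\min_{G'\in\mathcal{P},\,V(G')=V(G)}\sum_{\{x,y\}\subseteq W,\,\{x,y\}\in E(G)\triangle E(G')}\mathcal{D}(x)\mathcal{D}(y)$, and this quantity is within $2\cdot\varepsilon/10$ of the true distance of $(G,\mathcal{D})$ to $\mathcal{P}$, because the only part of $G$ not accounted for lies on pairs meeting $V(G)\setminus W$, which carry total $\mathcal{D}$-weight at most $\varepsilon/10$. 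Thus if $(G,\mathcal{D})$ is $\varepsilon$-far from $\mathcal{P}$ then $\alpha\ge\varepsilon-3\varepsilon/10>\varepsilon/2$ and the tester rejects. A union bound over the two conditioning events yields overall success probability at least $2/3$.

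The only genuinely new ingredient --- beyond standard sampling and concentration bookkeeping --- is the phantom-vertex device: because $\mathcal{P}$ need not be extendable, knowing $G[W]$ and an approximation of $\mathcal{D}$ does \emph{not} determine, even approximately, the distance of $(G,\mathcal{D})$ to $\mathcal{P}$; one must minimize only over those $\mathcal{P}$-graphs on $W$ that extend to $\mathcal{P}$-graphs on exactly $n$ vertices, which is possible precisely because the tester is now told $n$. (This is the same feature that makes the ``only if'' direction of Theorem~\ref{thm:main} break down once $|V(G)|$ is revealed to the tester.) The main thing to verify carefully is the quantitative claim that a bounded $\mathcal{D}$-sample captures all but an $\varepsilon$-fraction of the mass when $n<M$, and that the various $\varepsilon/10$-type errors combine as asserted; both are routine given that $n$ is bounded in terms of $\varepsilon$.
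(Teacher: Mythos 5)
Your proof is correct, and the overall strategy (split on whether $n \ge M_{\mathcal P}(\varepsilon)$, delegating the large case to Theorem~\ref{theo_rest3}, and for small $n$ brute-forcing over $n$-vertex graphs in $\mathcal P$ -- which is feasible precisely because $n$ is now given) is exactly the paper's. Where you deviate is in the decision rule for the small-$n$ case: the paper has no distance-estimation step at all. After sampling $t = M\log(3M)/\varepsilon$ vertices to form $U$ and querying all pairs inside $U$, it simply accepts iff there \emph{exists} an $n$-vertex graph in $\mathcal P$ containing $G[U]$ as an induced subgraph. Soundness is then immediate: with probability $\ge 2/3$, every vertex of weight $\ge \varepsilon/M$ lands in $U$, so $\mathcal D(V(G)\setminus U) < n\cdot\varepsilon/M < \varepsilon$; if such an extension $G'$ existed, one could turn $G$ into $G'$ by changing only edges touching $V(G)\setminus U$, of total weight $< \varepsilon$, contradicting $\varepsilon$-farness. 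Your version -- estimating the pair-weights $\mathcal D(x)\mathcal D(y)$ on $W$ and thresholding the minimum estimated distance at $\varepsilon/2$ -- works and in fact also gives one-sided error (since $\widehat f(G[W])$ is identically $0$ when $G\in\mathcal P$), but the Hoeffding/union-bound machinery and the tolerance bookkeeping are unnecessary: the exact extendability check is a pure yes/no question about $G[U]$ and $n$, requiring no knowledge of $\mathcal D$ beyond the fact that a bounded sample captures all but $\varepsilon$ of its mass. The phantom-vertex device you highlight is indeed the key point and matches the paper's.
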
 

\noindent
Finally, we consider settings in which restrictions are posed on the weights that the distribution $\mathcal{D}$ can \nolinebreak assign.

\begin{theorem}\label{theo:rest1}
	Under the promise that $\max_{v \in V(G)}{\mathcal{D}}(v) = o(1)$, every hereditary property is testable with one-sided error in the VDF model.
\end{theorem}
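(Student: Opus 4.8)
The plan is to show that the candidate tester mentioned right after Theorem~\ref{statement:LS} -- which was noted there to fail in general -- becomes a valid tester once heavy vertices are forbidden. Let $s=s_{\mathcal P}(\varepsilon)$ be the function supplied by Theorem~\ref{statement:LS}, and consider the tester that samples $u_1,\dots,u_s$ from $\mathcal D$, queries all $\binom s2$ pairs to build the graph $S$ on vertex set $[s]$ with $\{i,j\}\in E(S)$ iff $\{u_i,u_j\}\in E(G)$, and accepts if and only if $S$ satisfies $\mathcal P$. Its sample and query complexity is $s+\binom s2$, a function of $\varepsilon$ alone. I read the hypothesis ``$\max_v\mathcal D(v)=o(1)$'' as asserting the existence of a function $\delta_{\mathcal P}:(0,1)\to(0,1)$ such that this tester is correct with probability at least $2/3$ on every input $(G,\mathcal D)$ with $\max_v\mathcal D(v)\le\delta_{\mathcal P}(\varepsilon)$ (the tester itself is oblivious to $\mathcal D$ and never needs to know this bound); it will suffice to take $\delta_{\mathcal P}(\varepsilon)$ small enough that $\binom s2\,\delta_{\mathcal P}(\varepsilon)\le\tfrac1{10}$.

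For soundness, assume $(G,\mathcal D)$ is $\varepsilon$-far from $\mathcal P$. Then Theorem~\ref{statement:LS} directly gives that $S\notin\mathcal P$ with probability at least $2/3$, so the tester rejects with probability at least $2/3$; this step uses nothing about the weights.

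For completeness, assume $G\in\mathcal P$. The key point is that whenever the sampled vertices $u_1,\dots,u_s$ are pairwise distinct, the map $i\mapsto u_i$ is a graph isomorphism from $S$ onto the induced subgraph $G[\{u_1,\dots,u_s\}]$, which lies in $\mathcal P$ since $\mathcal P$ is hereditary; hence $S\in\mathcal P$ and the tester accepts. So it only remains to bound the probability of a repeated coordinate: two fixed coordinates collide with probability $\sum_v\mathcal D(v)^2\le\max_v\mathcal D(v)\le\delta_{\mathcal P}(\varepsilon)$, so a union bound over the $\binom s2$ pairs shows that $u_1,\dots,u_s$ are distinct except with probability at most $\binom s2\,\delta_{\mathcal P}(\varepsilon)\le\tfrac1{10}$. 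Thus the tester accepts with probability at least $9/10\ge2/3$.

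I expect no genuine obstacle here: this is the mildest of the three variants in this subsection. The conceptual content is simply that the ``blowup versus induced subgraph'' discrepancy which prevents Theorem~\ref{statement:LS} from yielding a tester in general is precisely the discrepancy produced by heavy vertices, and forbidding heavy vertices makes $S$ a genuine induced subgraph of $G$ with high probability -- after which Theorem~\ref{statement:LS} does all of the work. In particular, and unlike Theorem~\ref{statement:ours}, no extendability hypothesis is needed, since Theorem~\ref{statement:LS} holds for all hereditary properties. The one point requiring a little care is formalizing the asymptotic hypothesis, which we did above; one could alternatively phrase the conclusion as: for every hereditary $\mathcal P$ and every $\varepsilon$ there is $\delta=\delta_{\mathcal P}(\varepsilon)>0$ and a tester of query complexity $q_{\mathcal P}(\varepsilon)$ that is correct on all inputs with $\max_v\mathcal D(v)\le\delta$.
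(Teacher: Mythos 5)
Your proof is correct and is essentially the paper's argument reorganized: both rest on Theorem~\ref{statement:LS} (which the paper derives in this very subsection from Lemma~\ref{lem:reduction} and~\cite{AS_hereditary}) together with the observation that a small $\max_v\mathcal D(v)$ makes repeated samples unlikely, so that $S$ is w.h.p.\ isomorphic to the induced subgraph $G[\{u_1,\dots,u_s\}]$. The only difference is where the NHW hypothesis enters the argument: your tester accepts iff $S\in\mathcal P$ and uses the no-collision bound in the \emph{completeness} step (yielding a two-sided-error tester), whereas the paper's tester accepts iff $G[\{u_1,\dots,u_s\}]\in\mathcal P$, so completeness is automatic by heredity (one-sided error) and the no-collision bound is used instead in the \emph{soundness} step when passing from the blowup $G'$ back to $G$ -- mathematically the two packagings are equivalent.
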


\begin{theorem}\label{theo:rest2}
	Under the promise that $\min_{v \in V(G)}{\mathcal{D}}(v) = \Omega\left(1/|V(G)|\right)$, every hereditary property is testable with one-sided error in the VDF model.
\end{theorem}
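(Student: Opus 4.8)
The plan is to derive Theorem~\ref{theo:rest2} from Theorem~\ref{theo_rest3}, by combining the tester promised there for ``large'' inputs with a brute-force ``learn the whole graph'' procedure for ``small'' ones. The hypothesis $\min_v\mathcal{D}(v)=\Omega(1/n)$, say $\mathcal{D}(v)\ge c/n$ for all $v$ (here, as is standard for such promise models, we allow the tester to depend on the constant $c$ hidden in the $\Omega(\cdot)$, and this is the only place it is used), is exactly what makes it possible both to detect which of these two regimes we are in, and, in the small regime, to recover $G$ exactly from $O_{\varepsilon,c}(1)$ samples.

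Concretely, let $M_{\mathcal P}(\varepsilon)$ be the threshold from Theorem~\ref{theo_rest3} and let $N_0$ be $M_{\mathcal P}(\varepsilon)$, enlarged so that the inequalities below hold; put $m^\ast:=\lceil 5N_0/c\rceil$. The tester first draws $m^\ast$ vertices from $\mathcal{D}$ and lets $D$ be the number of distinct vertices seen. If $D>N_0$, then $n:=|V(G)|\ge D>N_0\ge M_{\mathcal P}(\varepsilon)$ for certain, so the tester runs the Theorem~\ref{theo_rest3} tester (amplified to error $1/10$) and returns its answer, spending only $O_\varepsilon(1)$ further queries. If $D\le N_0$, the tester concludes that $n\le 4N_0=:N_1$: indeed $\mathbb{E}[D]=\sum_v\bigl(1-(1-\mathcal{D}(v))^{m^\ast}\bigr)\ge\tfrac12\min(cm^\ast,n)$ by $\mathcal{D}(v)\ge c/n$, so if $n>4N_0$ then $\mathbb{E}[D]\ge 2N_0$, and since $D$ changes by at most $1$ under a change of one sample, McDiarmid's inequality gives $\Pr[D\le N_0]\le e^{-2N_0^2/m^\ast}\le 1/100$. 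In that case the tester draws a further $\lceil\tfrac{N_1}{c}\ln(100N_1)\rceil$ vertices, which (each vertex having weight $\ge c/N_1$) reveal all of $V(G)$ except with probability $1/100$, queries all $\le\binom{N_1}{2}$ pairs, and accepts iff the recovered graph $G$ lies in $\mathcal{P}$.

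Correctness is then immediate. If $G\in\mathcal{P}$: in the large branch the Theorem~\ref{theo_rest3} tester accepts with probability $\ge 9/10$, and in the small branch the recovered $G$ is seen to lie in $\mathcal{P}$. If $(G,\mathcal{D})$ is $\varepsilon$-far from $\mathcal{P}$: in the large branch we are in the regime $n\ge M_{\mathcal P}(\varepsilon)$ where that tester rejects with probability $\ge 9/10$, and in the small branch $G\notin\mathcal{P}$ so the recovered graph is rejected. Reaching the wrong branch is possible only when $n>4N_0$ and then only with probability $\le1/100$, and the coupon-collector failure probability is $\le1/100$, so the overall error stays below $1/3$; the total query cost is $O_{\varepsilon,c}(1)$, independent of $\mathcal{D}$ and of $n$. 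The degenerate case where $\mathcal{P}$ contains graphs of only boundedly many orders needs no special treatment: then $M_{\mathcal P}(\varepsilon)$ may be taken one more than that bound and the large branch simply rejects.

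The only step carrying real content, and the only place the distributional hypothesis is essential, is the implication ``$D\le N_0\Rightarrow n\le 4N_0$''. Its meaning is that when $\min_v\mathcal{D}(v)\ge c/n$ a vertex cannot simultaneously be present and all but invisible to sampling, so seeing few distinct vertices genuinely witnesses a small ground set and the brute-force recovery becomes affordable; this is precisely the feature that fails in the unrestricted VDF model, where the lower-bound constructions for non-extendable properties rely on vertices of weight $o(1/n)$. Everything else in the argument — the concentration bound for $D$, the coupon-collector estimate, and splicing the two branches into a single $O_{\varepsilon,c}(1)$-query tester — is routine bookkeeping.
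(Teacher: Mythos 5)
Your proposal is correct, but it takes a genuinely different route from the paper's. The paper's tester is the single canonical one — sample $t_{\mathcal P}(\varepsilon,\delta)$ vertices from $\mathcal D$ and accept iff the induced subgraph lies in $\mathcal P$ — and all of the case analysis is done in the proof of correctness (Lemma~\ref{lem:no_small_weight_vertices_tester}): if $n<2R$ the sample is shown to contain every vertex with high probability, and if $n\ge 2R$ the sample is shown to contain both a $\mathcal P$-bad subgraph $F$ (via the auxiliary extendable hereditary property $\mathcal H(\mathcal P)$ and Theorem~\ref{thm:main}) and at least $R\ge r_{\mathcal P}(F)$ distinct vertices (via a fixed partition of $V(G)$ into $R$ pieces and a per-piece coupon estimate), whence $G[U]\notin\mathcal P$. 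You instead build an \emph{adaptive} two-phase tester that first \emph{detects} the regime by counting distinct vertices in a pilot sample, and then either invokes Theorem~\ref{theo_rest3} as a black box (large branch) or brute-force reconstructs $G$ (small branch). Your detection step is sound — the bound $\mathbb{E}[D]\ge\tfrac12\min(cm^\ast,n)$ is correct since $1-(1-p)^{m^\ast}\ge\tfrac12\min(1,pm^\ast)$ and $\mathcal D(v)\ge c/n$, and the McDiarmid step works — and both branches have one-sided error for $G\in\mathcal P$ (the small branch because $\mathcal P$ is hereditary, so even a partial recovery yields a graph in $\mathcal P$; the large branch because $D>N_0$ certifies $n>N_0\ge M_{\mathcal P}(\varepsilon)$), so the overall error bookkeeping goes through. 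What the paper's route buys is a structurally simpler tester — a non-adaptive, canonical sample-and-check for $\mathcal P$ itself, with no pre-detection and no dependence on which branch one is in — at the cost of needing the $\mathcal P$-bad/$r_{\mathcal P}(F)$ argument explicitly inside the correctness proof; what your route buys is modularity (you reuse Theorem~\ref{theo_rest3} as a black box rather than re-running the $\mathcal H(\mathcal P)$ machinery) and a cleaner separation of the two phenomena (``see the whole small graph'' versus ``apply the large-input tester''), at the cost of an adaptive, branching tester.
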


We note that the implied constant in the $\Omega$-notation in Theorem \ref{theo:rest2} is allowed to depend on $\varepsilon$.
We refer the reader to Section \ref{sec:restricted_models} for the precise statements of 
Theorems \ref{theo_rest3}--\ref{theo:rest2}.
Let us mention that the proofs of Theorems \ref{theo_rest3}, \ref{theo_rest4} and \ref{theo:rest2} rely on reductions to our main result in this paper,
Theorem 1. The proof of Theorem \ref{theo:rest1} proceeds by a reduction to the standard model (i.e. to the result of [5]). As part of this
proof, we solve another problem raised in [19]. 

\subsection{Paper overview}
The rest of the paper is organized as follows. Section \ref{sec:preliminary} is devoted to proving vertex-weighted analogues of several lemmas that were used in prior works (most notably regularity and counting lemmas, and corollaries thereof). Some more routine parts of these proofs are deferred to the appendix.
In Section \ref{sec:positive} we prove the ``if'' direction of Theorem \ref{thm:main} (i.e. Theorem \ref{statement:ours}). This is by far the most challenging (and interesting) part of this paper.
The main step towards proving Theorem \ref{thm:main} is establishing Lemma \ref{lem:reg_main}, which is the key lemma of this paper.
For the reader's convenience, we give in Subsection \ref{subsec:intu} an overview of the key ideas of the proof. As the proofs in Section \ref{sec:preliminary} are somewhat routine, we encourage readers who are familiar with the regularity method to skip Section \ref{sec:preliminary} (at least on their first read), and go directly to Section \ref{sec:positive}.

The ``only if'' direction of Theorem \ref{thm:main} is proved in Section \ref{sec:negative}.
In Section \ref{sec:restricted_models} we prove Theorems 
\ref{theo_rest3}, \ref{theo_rest4}, \ref{theo:rest1} and \ref{theo:rest2}. 
We also
raise two additional problems related to the VDF model; one is to what extent can one extend the results of
Theorems \ref{theo_rest3}-\ref{theo:rest2} 
beyond hereditary properties, and the other asks if the sample complexity in the VDF model is the same as in the standard model (for properties that are testable
in the VDF model), see Subsection \ref{subsec:reduction}. Along the way we resolve another open problem raised in \cite{Goldreich_VDF} (see Lemma
\ref{lem:reduction}).
Throughout the paper, when we say that a function is increasing/decreasing we mean weakly increasing/decreasing (i.e. non-decreasing/non-increasing).



\section{Preliminary Lemmas}\label{sec:preliminary}
In this section we introduce vertex-weighted analogues of some key tools of the regularity method, most notable Szemer\'{e}di's regularity lemma \cite{Szemeredi}, the strong regularity lemma \cite{AFKS}, and the counting lemma, as well as some standard corollaries thereof. We also prove some other auxiliary lemmas needed for the proof of Theorem \nolinebreak \ref{thm:main}.

We start with two simple lemmas regarding probability distributions\footnote{Throughout the paper, we will simply write ``distribution" to mean ``probability distribution".} on a finite set. Given a distribution $\mathcal{D}$ on a set $U$ and a subset $W \subseteq U$, we use the notation $\mathcal{D}(W) := \sum_{w \in W}{\mathcal{D}(w)}$, and call $\mathcal{D}(W)$ the {\em weight} of $W$. We denote by $\mathcal{D}_W$ the distribution $\mathcal{D}$ conditioned on $W$, namely $\mathcal{D}_W(w) = \frac{\mathcal{D}(w)}{\mathcal{D}(W)}$ for every $w \in W$.
\begin{lemma}\label{lem:heavy_cut}
	For every set $U$, for every $\eta \in (0,1)$ and for every distribution $\mathcal{D}$ on $U$, there is a partition $\mathcal{P}$ of $U$ into $\lceil 1/\eta \rceil$ parts such that
	$\sum_{W \in \mathcal{P}}
	{\sum_{\{x,y\} \in \binom{W}{2}}}{\mathcal{D}(x)\mathcal{D}(y)} \leq \eta$.
\end{lemma}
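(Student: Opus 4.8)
The goal is, given a distribution $\mathcal{D}$ on $U$ and $\eta \in (0,1)$, to partition $U$ into $1/\eta$ parts so that the total $\mathcal{D}$-weight of pairs landing in a common part is at most $\eta$. The plan is to build the partition greedily by a ``first-fit'' / bin-packing argument ordered by the weights $\mathcal{D}(x)$. First I would sort the vertices so that $\mathcal{D}(x_1) \geq \mathcal{D}(x_2) \geq \cdots$, and process them one at a time, maintaining $k := 1/\eta$ bins (parts) $W_1, \dots, W_k$. When a new vertex $x$ arrives, place it into a part $W_i$ of currently minimum total weight $\mathcal{D}(W_i)$. At the end, bound $\sum_{W \in \mathcal{P}} \sum_{\{x,y\} \in \binom{W}{2}} \mathcal{D}(x)\mathcal{D}(y)$.

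The key step is the weight-balancing estimate. If at every moment we insert into the lightest part, then at the time $x$ is inserted into $W_i$, we have $\mathcal{D}(W_i) \leq \frac{1}{k}\sum_j \mathcal{D}(W_j) \le \frac{1}{k} = \eta$ (the current total weight placed so far is at most $1$). Hence each vertex $x$ contributes at most $\mathcal{D}(x) \cdot \eta$ to the ``bad pair weight'' coming from vertices placed before it in the same part, so summing over all $x$ gives at most $\eta \sum_x \mathcal{D}(x) = \eta$. Actually one should be slightly careful: the bad-pair sum $\sum_{W}\sum_{\{x,y\}\subseteq W}\mathcal{D}(x)\mathcal{D}(y)$ counts each unordered pair once, and for a pair $\{x,y\}$ with $y$ inserted after $x$ into the same $W_i$, its contribution $\mathcal{D}(x)\mathcal{D}(y)$ is at most $\mathcal{D}(y)\cdot \mathcal{D}(W_i^{\text{at time }y})$ summed appropriately; summing the $\mathcal{D}(x)$ over all earlier $x$ in $W_i$ is exactly $\mathcal{D}(W_i)$ just before $y$ is inserted, which is $\le \eta$. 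So the total is $\le \sum_y \mathcal{D}(y)\cdot \eta = \eta$, as desired. (If $1/\eta$ is not an integer one takes $k = \lceil 1/\eta \rceil$ parts, which only decreases the bound, and then the statement as written with ``$1/\eta$ parts'' is understood with this rounding, or one pads with empty parts.)

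I expect the only mild obstacle to be bookkeeping: making sure the telescoping/summation correctly accounts for each unordered bad pair exactly once and that the ``lightest bin has weight $\le \eta$'' invariant is stated at the right moment (just before insertion). There is no real combinatorial depth here — it is essentially the classical greedy bin-balancing bound — so the proof should be short, and no regularity machinery is needed.
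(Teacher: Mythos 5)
Your proof is correct, but it takes a genuinely different route from the paper. The paper's proof is probabilistic: it assigns each element of $U$ to one of $k = 1/\eta$ parts independently and uniformly at random, notes that any fixed pair $\{x,y\}$ lands in a common part with probability $1/k$, and concludes by linearity of expectation that the expected value of $\sum_{W}\sum_{\{x,y\}\in\binom{W}{2}}\mathcal{D}(x)\mathcal{D}(y)$ is at most $\frac{1}{k}\sum_{\{x,y\}}\mathcal{D}(x)\mathcal{D}(y)\leq\eta$, so some realization of the random partition works. Your argument is deterministic and greedy: maintain $k$ bins, always insert the next element into the currently lightest bin, observe that the lightest bin has weight at most $\frac{1}{k}(\text{total weight inserted so far})\leq\eta$ just before insertion, and charge each new same-part pair to the newly inserted element; summing $\mathcal{D}(x)\cdot\eta$ over all $x$ gives $\eta$. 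Both yield the same bound with comparable brevity. Your approach has the small advantage of being explicit and constructive, and it doesn't even need the initial sort by weight that you propose (the lightest-bin invariant holds regardless of processing order), so you can drop that step. The paper's probabilistic version is marginally slicker to write because it avoids the per-step bookkeeping, but there is no substantive difference; either proof is acceptable.
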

\begin{proof}
	Let $\mathcal{P}$ be a random partition of $U$ into $k := \lceil 1/\eta \rceil$ parts, where each element is assigned to one of the parts uniformly at random and independently of all other elements. Then for every pair of distinct elements $x,y \in U$, the probability that $x$ and $y$ belong to the same part is exactly $\frac{1}{k}$. By linearity of expectation we \nolinebreak have
	$$
	\mathbb{E}\left[ \sum_{W \in \mathcal{P}}
	{\sum_{\{x,y\} \in \binom{W}{2}}}{\mathcal{D}(x)\mathcal{D}(y)} \right] =
	\sum_{\{x,y\} \in \binom{U}{2}}
	{\mathcal{D}(x)\mathcal{D}(y) \cdot \frac{1}{k}}
	< \frac{1}{2} \cdot \frac{1}{k} < \eta,
	$$
	so there is a choice of $\mathcal{P}$ with the required property.
\end{proof}
\begin{lemma}\label{prop:balanced_partition}
	Let $a > 0$ be an integer, let $U$ be a finite set and let
	$\mathcal{D}$ be a distribution on $U$ such that $\mathcal{D}(u) \leq \frac{1}{2a}$ for every $u \in U$. Then there is a partition $U = U_1 \cup \dots \cup U_a$ such that $\mathcal{D}(U_i) \geq \frac{1}{2a}$ for every $1 \leq i \leq a$.
\end{lemma}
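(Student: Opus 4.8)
The plan is to build the partition greedily, one part at a time, letting the last part absorb whatever is left over. Concretely, I would maintain a pool $R$ of not-yet-assigned elements, initialized to $R = U$, and for $i = 1, \dots, a-1$ repeatedly move an arbitrary element of $R$ into $U_i$ until $\mathcal{D}(U_i)$ first reaches $\tfrac{1}{2a}$ (or $R$ becomes empty), and finally set $U_a := R$. The core structural observation is a two-sided estimate on each of the first $a-1$ parts: by construction $\mathcal{D}(U_i) \ge \tfrac{1}{2a}$, and since immediately before its last element was inserted the weight of $U_i$ was still strictly below $\tfrac{1}{2a}$, while every single element contributes at most $\tfrac{1}{2a}$ by hypothesis, we also get $\mathcal{D}(U_i) < \tfrac{1}{a}$.

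The one point that genuinely needs an argument — and the main (minor) obstacle — is that this process never aborts prematurely, i.e. $R$ is never exhausted while some $U_i$ with $i \le a-1$ still has weight below $\tfrac{1}{2a}$. I would prove this by contradiction: let $i^*$ be the smallest index for which $U_{i^*}$ ends with $\mathcal{D}(U_{i^*}) < \tfrac{1}{2a}$; then necessarily $R = \emptyset$ at that moment, so $U = U_1 \cup \dots \cup U_{i^*}$. For each $j < i^*$ the part $U_j$ is nonempty (when its construction started $R$ was still nonempty, as otherwise $j$ would be a bad index smaller than $i^*$), hence $\mathcal{D}(U_j) < \tfrac{1}{a}$ by the bound above; and $\mathcal{D}(U_{i^*}) < \tfrac{1}{2a} < \tfrac{1}{a}$. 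Summing, $1 = \mathcal{D}(U) = \sum_{j=1}^{i^*} \mathcal{D}(U_j) < i^*/a \le (a-1)/a < 1$, a contradiction.

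Once premature termination is ruled out, each of $U_1, \dots, U_{a-1}$ has $\mathcal{D}(U_i) \in [\tfrac{1}{2a}, \tfrac{1}{a})$, and therefore $\mathcal{D}(U_a) = 1 - \sum_{i=1}^{a-1} \mathcal{D}(U_i) > 1 - \tfrac{a-1}{a} = \tfrac{1}{a} \ge \tfrac{1}{2a}$, so all $a$ parts satisfy the required lower bound; the case $a = 1$ is trivial since then $U_1 = U$ has weight $1$. (An alternative route is an extremal argument — take a partition into $a$ parts maximizing $\min_i \mathcal{D}(U_i)$ and push an element from a heavy part to a lightest part — but that needs an extra tie-breaking potential to handle several parts attaining the minimum, so the greedy construction above is cleaner.)
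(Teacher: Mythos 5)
Your proof is correct and is essentially the same greedy-peeling argument as the paper's: build each part until its weight just crosses $\tfrac{1}{2a}$, and use the hypothesis $\mathcal{D}(u) \le \tfrac{1}{2a}$ to conclude that each such part has weight strictly below $\tfrac{1}{a}$. The paper packages this as an induction on $a$ (extract a minimal part of weight $\ge \tfrac{1}{2a}$, then condition on the complement and recurse), whereas you run an explicit loop and rule out premature exhaustion of the pool by a weight-counting contradiction — interchangeable bookkeeping for the same idea.
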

\begin{proof}
	We proof is by induction on $a$. The base case $a=1$ is trivial, so we assume from now on that $a \geq 2$. Let $U_1 \subseteq U$ be a set of minimal size satisfying $\mathcal{D}(U_1) \geq \frac{1}{2a}$. Then
	$\mathcal{D}(U_1) \leq \frac{1}{a}$, because otherwise we could remove an arbitrary element of $U_1$ (whose weight by assumption is at most $\frac{1}{2a}$) and thus get a proper subset of $U_1$ having weight at least $\frac{1}{2a}$, in contradiction the minimality of $U_1$. Now set
	$U' := U \setminus U_1$, noting that $\mathcal{D}(U') \geq 1 - \frac{1}{a}$.
	Then every $u \in U'$ satisfies
	$$
	\mathcal{D}_{U'}(u) = \frac{\mathcal{D}(u)}{\mathcal{D}(U')} \leq
	\frac{\frac{1}{2a}}{1 - \frac{1}{a}} = \frac{1}{2(a-1)} \; .
	$$
	So by the induction hypothesis for $(U',\mathcal{D}_{U'})$, there is a partition $U' = U_2 \cup \dots \cup U_a$ such that
	$$
	\mathcal{D}(U_i) = \mathcal{D}_{U'}(U_i) \cdot \mathcal{D}(U') \geq \frac{1}{2(a-1)} \cdot \mathcal{D}(U') \geq
	\frac{1}{2(a-1)} \cdot \left(1 - \frac{1}{a}\right) =
	\frac{1}{2a}
	$$
	for every $2 \leq i \leq a$. This completes the proof.
\end{proof}

We consider vertex-weighted graphs, i.e. pairs $(G,\mathcal{D})$ such that $G$ is a graph and $\mathcal{D}$ is a distribution on $V(G)$.
For a set $X \subseteq V(G)$, the {\em subgraph of $(G,\mathcal{D})$ induced by $X$} is defined to be $(G[X],\mathcal{D}_X)$, where $\mathcal{D}_X$ is the distribution $\mathcal{D}$ conditioned on $X$. The weight of an edge/non-edge $\{x,y\}$ (with respect to $\mathcal{D}$) is defined as $\mathcal{D}(x)\mathcal{D}(y)$. For a pair of disjoint sets $X,Y \subseteq V(G)$ with $\mathcal{D}(X),\mathcal{D}(Y) > 0$, the {\em density} of $(X,Y)$ is denoted by $d(X,Y)$ and defined to be
$d(X,Y) =
\frac{1}{\mathcal{D}(X)\mathcal{D}(Y)}
\sum_{(x,y) \in E(X,Y)}{\mathcal{D}(x)\mathcal{D}(y)}$,
where $E(X,Y)$ is the set of edges with one endpoint in $X$ and one endpoint in $Y$. If $\mathcal{D}(X) = 0$ or $\mathcal{D}(Y) = 0$ then define $d(X,Y) = 0$. 
A pair of disjoint vertex-sets $(X,Y)$ is called {\em $\varepsilon$-regular} if for every $X' \subseteq X$ and $Y' \subseteq Y$ with $\D(X') \geq \varepsilon \D(X)$ and $\D(Y') \geq \varepsilon \D(Y)$, it holds that $|d(X',Y') - d(X,Y)| \leq \varepsilon$. The following lemma describes some basic properties of $\varepsilon$-regular pairs.
\begin{lemma}\label{lem:regular_pair}
	Let $(G,\mathcal{D})$ be a vertex-weighted graph, and let $X,Y \subseteq V(G)$ be disjoint vertex-sets such that $\mathcal{D}(X),\mathcal{D}(Y) > 0$, and such that the pair $(X,Y)$ is $\varepsilon$-regular with density $d$. Then the following holds.
	\begin{enumerate}
		\item For every $\alpha \geq \varepsilon$ and $X' \subseteq X$, $Y' \subseteq Y$ with $\mathcal{D}(X') \geq \alpha \mathcal{D}(X)$ and $\mathcal{D}(Y') \geq \alpha \mathcal{D}(Y)$, the pair $(X',Y')$ has density at least $d - \varepsilon$ and at most $d + \varepsilon$, and is $\varepsilon'$-regular with
		$\varepsilon' = \max\{\varepsilon/\alpha, 2\varepsilon\}$.
		\item The set of vertices $x \in X$ which satisfy $|d(x,Y) - d| > \varepsilon$ has weight less than $2\varepsilon \cdot \mathcal{D}(X)$.
	\end{enumerate}
\end{lemma}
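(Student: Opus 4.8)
The plan is to derive both parts directly from the definition of $\varepsilon$-regularity, exactly as in the classical unweighted setting, with the weights $\mathcal{D}(x)$ playing the role of vertex cardinalities. The only new feature is that sizes are measured by $\mathcal{D}$, so one has to be a little careful with degenerate (weight-zero) sets and with the two-term bound $\varepsilon' = \max\{\varepsilon/\alpha, 2\varepsilon\}$.

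\emph{Part 1.} The density bound is immediate: since $\mathcal{D}(X') \geq \alpha \mathcal{D}(X) \geq \varepsilon \mathcal{D}(X)$ and likewise $\mathcal{D}(Y') \geq \varepsilon \mathcal{D}(Y)$, applying the defining property of $\varepsilon$-regularity of $(X,Y)$ to the sub-pair $(X',Y')$ gives $|d(X',Y') - d| \leq \varepsilon$, i.e. $d-\varepsilon \leq d(X',Y') \leq d+\varepsilon$. For the regularity of $(X',Y')$, I would take arbitrary $X'' \subseteq X'$ and $Y'' \subseteq Y'$ with $\mathcal{D}(X'') \geq \varepsilon' \mathcal{D}(X')$ and $\mathcal{D}(Y'') \geq \varepsilon' \mathcal{D}(Y')$, and observe that then $\mathcal{D}(X'') \geq \varepsilon' \alpha \mathcal{D}(X) \geq \varepsilon \mathcal{D}(X)$ because $\varepsilon' \geq \varepsilon/\alpha$, and similarly $\mathcal{D}(Y'') \geq \varepsilon \mathcal{D}(Y)$. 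Hence both $(X'',Y'')$ and $(X',Y')$ are ``legal test pairs'' witnessing the $\varepsilon$-regularity of $(X,Y)$, so each of their densities lies within $\varepsilon$ of $d$, and therefore $|d(X'',Y'') - d(X',Y')| \leq 2\varepsilon \leq \varepsilon'$. (If $\varepsilon' \geq 1$ the regularity assertion is vacuous, so we may assume $\varepsilon' < 1$.)

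\emph{Part 2.} Here I would use the standard averaging argument. Write $B^+ = \{x \in X : d(x,Y) > d+\varepsilon\}$ and $B^- = \{x \in X : d(x,Y) < d-\varepsilon\}$, where $d(x,Y)$ abbreviates $d(\{x\},Y)$. The key point is the identity: for any $Z \subseteq X$ with $\mathcal{D}(Z)>0$, rearranging the definition of $d(Z,Y)$ gives $d(Z,Y) = \frac{1}{\mathcal{D}(Z)}\sum_{x \in Z}\mathcal{D}(x)\,d(x,Y)$, i.e. $d(Z,Y)$ is the $\mathcal{D}$-weighted average of the individual densities $d(x,Y)$ over $x \in Z$. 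Now suppose for contradiction that $\mathcal{D}(B^+) \geq \varepsilon \mathcal{D}(X)$; then $(B^+,Y)$ is a legal test pair (note $\mathcal{D}(Y) \geq \varepsilon \mathcal{D}(Y)$ trivially), so $\varepsilon$-regularity gives $d(B^+,Y) \leq d+\varepsilon$, whereas the identity forces $d(B^+,Y) > d+\varepsilon$ — a contradiction. Hence $\mathcal{D}(B^+) < \varepsilon \mathcal{D}(X)$, and symmetrically $\mathcal{D}(B^-) < \varepsilon \mathcal{D}(X)$, so the exceptional set $B^+ \cup B^-$ has weight less than $2\varepsilon \mathcal{D}(X)$.

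I do not expect a genuine obstacle in either part; this is a routine transcription of the unweighted proof. The only points needing mild attention are the bookkeeping that produces the two terms of $\varepsilon' = \max\{\varepsilon/\alpha, 2\varepsilon\}$, and the degenerate cases (vertices of weight $0$, or $B^\pm$ of weight $0$), which are harmless: weight-$0$ vertices contribute nothing to any of the relevant sums, and the convention $d(X,Y)=0$ when one side has zero weight means such cases never generate a false counterexample.
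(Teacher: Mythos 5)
Your proof is correct and follows essentially the same route as the paper's: Part 1 reduces to the same chain $\mathcal{D}(X'') \geq \varepsilon'\mathcal{D}(X') \geq \varepsilon'\alpha\,\mathcal{D}(X) \geq \varepsilon\,\mathcal{D}(X)$ and the triangle inequality, and Part 2 is the same weighted-averaging/test-pair contradiction the paper uses for $X^{\pm}$. The only differences are cosmetic (you state the averaging identity explicitly rather than carrying it inside a display, and you flag the zero-weight degeneracies, which the paper leaves implicit).
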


\begin{proof}
	Starting with Item 1, let $X' \subseteq X$ and $Y' \subseteq Y$ be such that $\mathcal{D}(X') \geq \alpha \mathcal{D}(X)$ and $\mathcal{D}(Y') \geq \alpha \mathcal{D}(Y)$. Since $\alpha \geq \varepsilon$, the $\varepsilon$-regularity of $(X,Y)$ implies that $d - \varepsilon \leq d(X',Y') \leq d + \varepsilon$. Now let us show that $(X',Y')$ is $\varepsilon'$-regular with
	$\varepsilon' = \max\{\varepsilon/\alpha, 2\varepsilon\}$. Let $X'' \subseteq X'$ and $Y'' \subseteq Y'$ be such that
	$\mathcal{D}(X'') \geq \varepsilon' \mathcal{D}(X')$ and
	$\mathcal{D}(Y'') \geq \varepsilon' \mathcal{D}(Y')$. Then
	$\mathcal{D}(X'') \geq \frac{\varepsilon}{\alpha} \mathcal{D}(X') \geq \varepsilon \mathcal{D}(X)$ and similarly
	$\mathcal{D}(Y'') \geq \varepsilon \mathcal{D}(Y)$. So by the $\varepsilon$-regularity of $(X,Y)$ we have $|d(X'',Y'') - d(X,Y)| \leq \varepsilon$ and hence $|d(X'',Y'') - d(X',Y')| \leq 2\varepsilon \leq \varepsilon'$, as required.
	
	We now prove Item 2. Let $X^+$ (resp. $X^-$) be the set of all $x \in X$ satisfying $d(x,Y) > d + \varepsilon$ (resp. $d(x,Y) < d - \varepsilon$). We have
	\begin{align*}
	d(X^+,Y) &=
	\frac{1}{\mathcal{D}(X^+)\mathcal{D}(Y)} \cdot
	\sum_{x \in X^+}\sum_{y \in N_Y(x)}{\mathcal{D}(x)\mathcal{D}(y)}
	=
	\frac{1}{\mathcal{D}(X^+)\mathcal{D}(Y)} \cdot
	\sum_{x \in X^+}{\mathcal{D}(x) \cdot \mathcal{D}(Y) \cdot d(x,Y)}
	\\ &>
	\frac{1}{\mathcal{D}(X^+)\mathcal{D}(Y)} \cdot \mathcal{D}(X^+)\mathcal{D}(Y) \cdot (d + \varepsilon) = d + \varepsilon.
	\end{align*}
	So unless $\mathcal{D}(X^+) < \varepsilon \mathcal{D}(X)$, we get a contradiction to the $\varepsilon$-regularity of $(X,Y)$. Similarly, we must have $\mathcal{D}(X^-) < \varepsilon \mathcal{D}(X)$. The assertion follows.
\end{proof}
\noindent
The following is a vertex-weighted counting lemma.
\begin{lemma}[Counting lemma for vertex-weighted graphs]\label{lem:counting}
	For every integer $h \geq 2$ and $\eta \in (0,1)$ there is
	$\delta = \delta_{\ref{lem:counting}}(h,\eta)$ such that the following holds. Let $H$ be a graph on $[h]$ and let $U_1,\dots,U_h$ be pairwise-disjoint vertex-sets in a vertex-weighted graph $(G,\mathcal{D})$, such that the following holds.
	\begin{enumerate}
		\item For every $1 \leq i < j \leq h$, if $\{i,j\} \in E(H)$ then $d(U_i,U_j) \geq \eta$, and if $\{i,j\} \notin E(H)$ then $d(U_i,U_j) \leq 1 - \eta$.
		\item For every $1 \leq i < j \leq h$, the pair $(U_i,U_j)$ is $\delta$-regular.
	\end{enumerate}
	Let $\mathcal{U}$ be the set of all $(u_1,\dots,u_h) \in U_1 \times \dots \times U_h$ such that $u_1,\dots,u_h$ induce a copy of $H$ in which $u_i$ plays the role of $i$ for every $1 \leq i \leq h$. Then
	$\sum_{(u_1,\dots,u_h) \in \mathcal{U}}{\prod_{i=1}^{h}{\mathcal{D}(u_i)}} \geq
	\delta \prod_{i=1}^{h}{\mathcal{D}(U_i)}$.
\end{lemma}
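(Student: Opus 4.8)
The plan is to run the standard sequential embedding argument behind the counting lemma, but with vertex weights taking the place of set sizes, using Item~2 of Lemma~\ref{lem:regular_pair} to control ``typical'' vertices and Item~1 to keep the restricted pairs regular. We may assume $\mathcal{D}(U_i)>0$ for all $i$, as otherwise the right-hand side is $0$ and there is nothing to prove. Fix $H$ on $[h]$ and the sets $U_1,\dots,U_h$, and build a copy of $H$ one vertex at a time: having committed to $u_1,\dots,u_k$ with $u_i\in U_i$, for each $j>k$ let $U_j^{(k)}$ be the set of $v\in U_j$ that are adjacent to exactly those $u_i$ ($i\le k$) with $\{i,j\}\in E(H)$. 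Thus $U_j^{(0)}=U_j$, and any tuple $(u_1,\dots,u_h)$ with $u_k\in U_k^{(k-1)}$ for all $k$ lies in $\mathcal{U}$. Writing $\gamma:=\eta/2$, I will maintain by induction on $k$ the invariant: (a) $\mathcal{D}(U_j^{(k)})\ge \gamma^{k}\mathcal{D}(U_j)$ for every $j>k$; (b) every pair $(U_i^{(k)},U_j^{(k)})$ with $k<i<j$ is $\varepsilon_k$-regular, where $\varepsilon_0=\delta$ and $\varepsilon_{k+1}=\max\{\varepsilon_k/\gamma,2\varepsilon_k\}$; and (c) $|d(U_i^{(k)},U_j^{(k)})-d(U_i,U_j)|\le \sum_{i'<k}\varepsilon_{i'}$. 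Choosing $\delta=\delta_{\ref{lem:counting}}(h,\eta)$ small enough---one can take $\delta=(\eta/2)^{Ch^2}$ for a suitable absolute constant $C$---ensures that throughout the process $\varepsilon_k\le\min\{\gamma,\,\tfrac1{4h}\}$ and $\sum_{i<h}\varepsilon_i\le \eta/4$, which is all that is needed below. (Since $\gamma=\eta/2\le\tfrac12$, one in fact has $\varepsilon_k=(2/\eta)^k\delta$.)

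For the inductive step, assume $u_1,\dots,u_k$ satisfy the invariant and pick $u_{k+1}\in U_{k+1}^{(k)}$, weighted by $\mathcal{D}$. For each $j>k+1$, Item~2 of Lemma~\ref{lem:regular_pair} applied to the $\varepsilon_k$-regular pair $(U_{k+1}^{(k)},U_j^{(k)})$ says the set of $v\in U_{k+1}^{(k)}$ with $|d(v,U_j^{(k)})-d(U_{k+1}^{(k)},U_j^{(k)})|>\varepsilon_k$ has weight $<2\varepsilon_k\mathcal{D}(U_{k+1}^{(k)})$; union-bounding over the at most $h$ values of $j$, a subset $g_{k+1}\subseteq U_{k+1}^{(k)}$ of weight at least $(1-2h\varepsilon_k)\mathcal{D}(U_{k+1}^{(k)})\ge\tfrac12\mathcal{D}(U_{k+1}^{(k)})$ is ``good'', meaning $d(u_{k+1},U_j^{(k)})$ is within $\varepsilon_k$ of $d(U_{k+1}^{(k)},U_j^{(k)})$ for every $j>k+1$. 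By (c) and the hypothesis, for a good $u_{k+1}$ and $j>k+1$ we get $d(u_{k+1},U_j^{(k)})\ge \eta-\tfrac{\eta}{4}-\varepsilon_k\ge\gamma$ if $\{k+1,j\}\in E(H)$, and $d(u_{k+1},U_j^{(k)})\le 1-\eta+\tfrac{\eta}{4}+\varepsilon_k\le 1-\gamma$ if $\{k+1,j\}\notin E(H)$; hence the candidate set $U_j^{(k+1)}$ (the neighbourhood, resp. non-neighbourhood, of $u_{k+1}$ within $U_j^{(k)}$) has $\mathcal{D}(U_j^{(k+1)})\ge\gamma\,\mathcal{D}(U_j^{(k)})\ge\gamma^{k+1}\mathcal{D}(U_j)$, re-establishing (a). Finally, since $U_i^{(k+1)}\subseteq U_i^{(k)}$ and $U_j^{(k+1)}\subseteq U_j^{(k)}$ each have relative weight at least $\gamma\ge\varepsilon_k$ inside $U_i^{(k)}$, resp. $U_j^{(k)}$, Item~1 of Lemma~\ref{lem:regular_pair} shows $(U_i^{(k+1)},U_j^{(k+1)})$ is $\varepsilon_{k+1}$-regular and its density changed by at most $\varepsilon_k$, re-establishing (b) and (c).

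It remains to count. Every tuple obtained by choosing $u_{k+1}\in g_{k+1}$ for $k=0,\dots,h-2$ (where $g_{k+1}$ depends on $u_1,\dots,u_k$) and then $u_h\in U_h^{(h-1)}$ arbitrary lies in $\mathcal{U}$, so
\[
\sum_{(u_1,\dots,u_h)\in\mathcal{U}}\prod_{i=1}^{h}\mathcal{D}(u_i)\;\ge\;\sum_{u_1\in g_1}\mathcal{D}(u_1)\sum_{u_2\in g_2}\mathcal{D}(u_2)\cdots\sum_{u_{h-1}\in g_{h-1}}\mathcal{D}(u_{h-1})\cdot\mathcal{D}\!\left(U_h^{(h-1)}\right).
\]
Evaluating from the inside out and using at level $k$ both $\mathcal{D}(g_k)\ge\tfrac12\mathcal{D}(U_k^{(k-1)})$ and the fact (from (a)) that each candidate set shrinks by a factor at least $\gamma$ when $u_k$ is inserted, an easy downward induction gives a lower bound of $c_1\prod_{i=1}^{h}\mathcal{D}(U_i)$ with $c_1=(\tfrac12)^{h-1}\gamma^{\binom h2}$, the exponent $\binom h2=\sum_{k=1}^{h-1}(h-k)$ counting the total number of restriction operations performed. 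Hence the weighted number of copies of $H$ is at least $(\tfrac12)^{h-1}(\eta/2)^{\binom h2}\prod_{i}\mathcal{D}(U_i)\ge\delta\prod_i\mathcal{D}(U_i)$, as $\delta$ was chosen at most this quantity. The one place where care is genuinely needed in the weighted setting is the bookkeeping in the invariant: one must track \emph{weights} of the good and candidate sets rather than their cardinalities, and one must verify that their relative weights never drop below the fixed constant $\gamma=\eta/2$---this is precisely what prevents the regularity parameters $\varepsilon_k$ from exploding over the $h$ rounds, and it is where the density-consistency hypothesis is used essentially (via Item~1 of Lemma~\ref{lem:regular_pair}). Everything else is a routine adaptation of the classical argument.
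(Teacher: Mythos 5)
Your proof is correct and is essentially the paper's argument presented in unrolled form: the paper proves the lemma by induction on $h$, fixing a ``good'' $u_1 \in U'_1$ (via Item~2 of Lemma~\ref{lem:regular_pair}), restricting the remaining $U_i$ to neighbourhoods/non-neighbourhoods, invoking Item~1 of Lemma~\ref{lem:regular_pair} to keep the restricted pairs regular with controlled density drift, and applying the inductive hypothesis to $H[\{2,\dots,h\}]$ with parameter $\eta/2$; you simply perform all $h-1$ restriction steps sequentially while tracking the regularity parameter $\varepsilon_k$ and the weight decay $\gamma^k$ explicitly rather than hiding them inside a recursively defined $\delta(h,\eta)$. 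The bookkeeping differs slightly (the paper halves $\eta$ at each recursive level, whereas you keep $\eta$ fixed and bound the accumulated drift $\sum_{i'<k}\varepsilon_{i'}$ by $\eta/4$), but both yield the same quantitative conclusion via the same two ingredients.
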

\begin{proof}
	If $\mathcal{D}(U_i) = 0$ for some $1 \leq i \leq h$ then there is nothing to prove, so suppose that $\mathcal{D}(U_i) > 0$ for every $1 \leq i \leq h$. 
	The proof is by induction on $h$. The base case $h=2$ trivially holds with $\delta = \delta(2,\eta) = \eta$. So from now on we assume that $h \geq 3$, and set
	$$
	\delta = \delta(h,\eta) =
	\min\left\{
	\frac{1}{4(h-1)}, \; \frac{\eta}{2}, \; \frac{1}{2} \cdot \left( \frac{\eta}{2} \right)^{h-1} \cdot \delta(h-1,\eta/2) \right\}.
	$$
	For each $2 \leq i \leq h$, let $W_i$ be the set of all vertices $u_1 \in U_1$ for which $|d(u_1,U_i) - d(U_1,U_i)| > \delta$. By Item 2 of Lemma \ref{lem:regular_pair}, we have
	$\mathcal{D}(W_i) < 2\delta \cdot \mathcal{D}(U_1)$. Hence, the set
	$U'_1 := U_1 \setminus \bigcup_{i=2}^{h}{W_i}$ satisfies
	$\mathcal{D}(U'_1) >
	\mathcal{D}(U_1) - (h-1) \cdot 2\delta \cdot \mathcal{D}(U_1) \geq
	\frac{1}{2} \mathcal{D}(U_1)$,
	where in the last inequality we used our choice of $\delta$. Now fix any $u_1 \in U'_1$. We define sets $U'_2,\dots,U'_h$ as follows: for $2 \leq i \leq h$, if $\{1,i\} \in E(H)$ then set
	$U'_i = N_{U_i}(u_1)$, and if $\{1,i\} \notin E(H)$ then set $U'_i = U_i \setminus N_{U_i}(u_1)$. By using Item 1 and the fact that $u_1 \in U'_1$, we get that $\mathcal{D}(U'_i) \geq (\eta - \delta)\mathcal{D}(U_i) \geq \frac{\eta}{2} \cdot \mathcal{D}(U_i)$ for every $2 \leq i \leq h$. By Item 1 of Lemma \ref{lem:regular_pair}, and by Conditions 1-2 of the current lemma, we get that for every $2 \leq i < j \leq h$, the pair $(U'_i,U'_j)$ is $\delta'$-regular with
	$\delta' = 2\delta/\eta \leq \delta(h-1,\eta/2)$, and that if $\{i,j\} \in E(H)$ then $d(U'_i,U'_j) \geq \eta - \delta \geq \eta/2$ and if $\{i,j\} \notin E(H)$ then $d(U'_i,U'_j) \leq 1 - \eta + \delta \leq 1 - \frac{\eta}{2}$.
	
	We now see that the sets $U'_2,\dots,U'_h$ satisfy the requirements of the lemma with respect to the graph $H' = H[\{2,\dots,h\}]$ and with $\frac{\eta}{2}$ in place of $\eta$. Let $\mathcal{U}'$ be the set of all $(u_2,\dots,u_h) \in U'_2 \times \dots \times U'_h$ such that $u_2,\dots,u_h$ induce a copy of $H'$ with $u_i$ playing the role of $i$ for every $2 \leq i \leq h$. By the induction hypothesis, we have
	$$
	\sum_{(u_2,\dots,u_h) \in \mathcal{U}'}{\prod_{i=2}^{h}{\mathcal{D}(u_i)}} \geq \delta(h-1,\eta/2) \cdot \prod_{i=2}^{h}{\mathcal{D}(U'_i)} \geq
	\delta(h-1,\eta/2) \cdot (\eta/2)^{h-1} \cdot \prod_{i=2}^{h}{\mathcal{D}(U_i)} \geq
	2\delta \prod_{i=2}^{h}{\mathcal{D}(U_i)}.
	$$
	For every $(u_2,\dots,u_h) \in \U'$, the tuple $(u_1,\dots,u_h)$ induces a copy of $H$ with $u_i$ playing the role of $i$ for every $1 \leq i \leq h$. Hence, for every $(u_2,\dots,u_h) \in \U'$ we have $(u_1,\dots,u_h) \in \U$ (where $\U$ is defined in the statement of the lemma). Since this is true for every $u_1 \in U'_1$, we get that
	$$
	\sum_{(u_1,\dots,u_h) \in \U}{\prod_{i=1}^{h}{\mathcal{D}(u_i)}} \geq
	\sum_{u_1 \in U'_1}{\mathcal{D}(u_1) \cdot
		2\delta \prod_{i=2}^{h}{\mathcal{D}(U_i)}} =
	\mathcal{D}(U'_1) \cdot 2\delta \prod_{i=2}^{h}{\mathcal{D}(U_i)} \geq
	\delta \prod_{i=1}^{h}{\mathcal{D}(U_i)},
	$$
	as required.
\end{proof}

A partition $\mathcal{P} = \{V_1,\dots,V_r\}$ of the vertex-set of a vertex-weighted graph $(G,\mathcal{D})$ is called {\em $\varepsilon$-regular} if the sum of $\D(V_i)\D(V_j)$ over all pairs $1 \leq i < j \leq r$ for which $(V_i,V_j)$ is not $\varepsilon$-regular, is at most $\varepsilon$.
We now state vertex-weighted versions\footnote{We note that a weighted version of Szemer\'{e}di's regularity lemma, where both vertex-weights and edge-weights are allowed, was proved in \cite{CP}, but only under the assumption that all vertex-weights are $o(1)$. Hence this lemma is unsuitable in our setting.} of Szemer\'{e}di's regularity lemma \cite{Szemeredi} and of the strong regularity lemma \cite{AFKS}.
The proofs of these lemmas appear in the appendix.

\begin{lemma}[Szemer\'{e}di's regularity lemma for vertex-weighted graphs]\label{lem:reg}
	For every $\varepsilon \in (0,1)$ and $m \geq 0$ there is $T = T_{\ref{lem:reg}}(\varepsilon,m)$ such that for every vertex-weighted graph $(G,\mathcal{D})$ and for every partition $\mathcal{P}_0$ of $V(G)$ of size not larger than $m$, there is an $\varepsilon$-regular partition $\mathcal{P}$ of $V(G)$ which has at most $T$ parts and refines $\mathcal{P}_0$.
\end{lemma}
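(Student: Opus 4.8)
The plan is to run the classical mean-square-index (energy increment) argument of Szemer\'edi, with vertex cardinalities replaced by $\mathcal{D}$-weights everywhere. For a partition $\mathcal{P}=\{V_1,\dots,V_r\}$ of $V(G)$ define its \emph{index} $q(\mathcal{P})=\sum_{1\le i<j\le r}\mathcal{D}(V_i)\mathcal{D}(V_j)\,d(V_i,V_j)^2$. Since $d(\cdot,\cdot)\in[0,1]$ and $\sum_{i<j}\mathcal{D}(V_i)\mathcal{D}(V_j)\le 1$, we have $0\le q(\mathcal{P})\le 1$. A useful preliminary observation is that parts of $\mathcal{D}$-weight $0$ are harmless: if $\mathcal{D}(X)=0$ then every admissible $X'\subseteq X$ also has $\mathcal{D}(X')=0$, so $d(X',Y')=0=d(X,Y)$ and $(X,Y)$ is trivially $\varepsilon$-regular; such parts may simply be ignored throughout. (This, together with the fact that the statement does not ask for an equitable partition, is why the weighted proof is, if anything, a little cleaner than the unweighted one.)

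First I would record the two standard consequences of the convexity of $t\mapsto t^2$ applied with the measure $\mathcal{D}$. For a refinement of a pair $(V_i,V_j)$, writing $V_i$ as the disjoint union of the $V_i^a$, writing $V_j$ as the disjoint union of the $V_j^b$, and setting $W=\mathcal{D}(V_i)\mathcal{D}(V_j)$, one checks the identity $\sum_{a,b}\mathcal{D}(V_i^a)\mathcal{D}(V_j^b)\,d(V_i^a,V_j^b)^2 = W\,d(V_i,V_j)^2 + \sum_{a,b}\mathcal{D}(V_i^a)\mathcal{D}(V_j^b)\big(d(V_i^a,V_j^b)-d(V_i,V_j)\big)^2$, using that $d(V_i,V_j)$ is the weighted average $\frac1W\sum_{a,b}\mathcal{D}(V_i^a)\mathcal{D}(V_j^b)\,d(V_i^a,V_j^b)$. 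Summing the resulting inequality over $i<j$ gives \emph{monotonicity}: $q(\mathcal{P}')\ge q(\mathcal{P})$ whenever $\mathcal{P}'$ refines $\mathcal{P}$. Moreover, if $(V_i,V_j)$ is not $\varepsilon$-regular — so in particular $\mathcal{D}(V_i),\mathcal{D}(V_j)>0$ — then, taking a witness $V_i'\subseteq V_i$, $V_j'\subseteq V_j$ with $\mathcal{D}(V_i')\ge\varepsilon\mathcal{D}(V_i)$, $\mathcal{D}(V_j')\ge\varepsilon\mathcal{D}(V_j)$ and $|d(V_i',V_j')-d(V_i,V_j)|>\varepsilon$, and refining $V_i$ into $\{V_i',V_i\setminus V_i'\}$ and $V_j$ into $\{V_j',V_j\setminus V_j'\}$, the identity above (keeping just the $(V_i',V_j')$ term on the right) shows that this $2\times2$ refinement raises the contribution of the pair to the index by at least $\mathcal{D}(V_i')\mathcal{D}(V_j')\varepsilon^2\ge\varepsilon^4\,\mathcal{D}(V_i)\mathcal{D}(V_j)$ — an \emph{energy increment} bounded below by a fixed power of $\varepsilon$ times the weight of the pair, uniformly in the (possibly tiny) weights.

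With these in hand the iteration is routine. Start with $\mathcal{P}=\mathcal{P}_0$, which refines $\mathcal{P}_0$ and has at most $m$ parts. If the current $\mathcal{P}$ is $\varepsilon$-regular, stop. Otherwise the total weight $\sum\mathcal{D}(V_i)\mathcal{D}(V_j)$ over irregular pairs exceeds $\varepsilon$; pick a witness for each such pair and let $\mathcal{P}'$ be the common refinement of $\mathcal{P}$ and all the resulting two-set cuts. Each part of $\mathcal{P}$ is cut by at most $|\mathcal{P}|-1$ witnesses, hence into at most $2^{|\mathcal{P}|}$ pieces, so $|\mathcal{P}'|\le|\mathcal{P}|\cdot 2^{|\mathcal{P}|}$, and $\mathcal{P}'$ still refines $\mathcal{P}_0$. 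Combining monotonicity (applied pair by pair, for the regular or untouched pairs) with the energy increment (applied to each irregular pair) gives $q(\mathcal{P}')\ge q(\mathcal{P})+\varepsilon^4\cdot\varepsilon=q(\mathcal{P})+\varepsilon^5$. Since $q$ stays in $[0,1]$, this refinement step can be repeated at most $\lceil\varepsilon^{-5}\rceil$ times, after which an $\varepsilon$-regular partition refining $\mathcal{P}_0$ is produced. Iterating the map $k\mapsto k\cdot 2^k$ for $\lceil\varepsilon^{-5}\rceil$ steps starting from $m$ yields the desired bound $T_{\ref{lem:reg}}(\varepsilon,m)$ on the number of parts.

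The argument presents no real obstacle; the only step meriting a moment's care is the energy-increment bound of the previous paragraph — verifying that the weighted defect Cauchy--Schwarz inequality really produces an increment bounded away from $0$ by a fixed polynomial in $\varepsilon$ (scaled by the pair's weight), regardless of how small the individual part-weights are. As observed above, the absence of an equitability requirement in the conclusion means there is nothing further to worry about: we never need to re-balance the parts after refining, so no auxiliary partition step is needed and the only weighted subtlety, zero-weight parts, was already disposed of.
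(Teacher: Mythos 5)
Your proof is correct and follows essentially the same route as the paper's: it establishes the weighted mean--variance identity, derives index monotonicity under refinement and an $\varepsilon^4$-per-pair energy increment for irregular pairs (this is the paper's Lemmas \ref{lem:density_mean_variance}, \ref{lem:index_refinement} and \ref{lem:index_increase}), and then iterates the $\varepsilon^5$ total index gain at most $\varepsilon^{-5}$ times with the part count growing as $k\mapsto k\cdot 2^k$. The remark that zero-weight parts are trivially regular (because $d$ is declared to be $0$ on such pairs) and that equitability is not required is a nice explicit acknowledgement of points the paper handles implicitly through its definitions, but the substance of the argument is the same.
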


\begin{lemma}[Strong regularity lemma for vertex-weighted graphs]\label{lem:strong_reg}
	For every function $\mathcal{E} : \mathbb{N} \rightarrow (0,1)$ and for every integer $m$, there is $S = S_{\ref{lem:strong_reg}}(\mathcal{E},m)$ such that for every vertex-weighted graph $(G,\mathcal{D})$ and for every partition $\mathcal{P}_0$ of $V(G)$ of size at most $m$, there is a refinement $\mathcal{P}$ of $\mathcal{P}_0$, and a refinement $\mathcal{Q}$ of $\mathcal{P}$, such that the following holds.
	\begin{enumerate}
		\item $|\mathcal{Q}| \leq S$.
		\item The partition $\mathcal{Q}$ is $\mathcal{E}(|\mathcal{P}|)$-regular.
		\item $\sum_{P_1,P_2 \in \mathcal{P}}\sum_{Q_1 \subseteq P_1, Q_2 \subseteq P_2}{\mathcal{D}(Q_1)\mathcal{D}(Q_2) \cdot |d(Q_1,Q_2) - d(P_1,P_2)|} \leq \mathcal{E}(0)$. Here the outer sum is over all unordered pairs of distinct $P_1,P_2 \in \mathcal{P}$, and the inner sum is over all $Q_1,Q_2 \in \mathcal{Q}$ such that $Q_i \subseteq P_i$ for $i = 1,2$.
	\end{enumerate}
\end{lemma}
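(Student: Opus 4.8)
The plan is to adapt the standard iterative proof of the strong regularity lemma from \cite{AFKS} to the vertex-weighted setting, the only new ingredient being the vertex-weighted Szemer\'edi regularity lemma (Lemma \ref{lem:reg}). The potential I will track is the mean-square-density index: for a partition $\mathcal{R}=\{V_1,\dots,V_r\}$ of $V(G)$ put $q(\mathcal{R})=\sum_{1\le i<j\le r}\mathcal{D}(V_i)\mathcal{D}(V_j)\,d(V_i,V_j)^2$. Since $\sum_{i<j}\mathcal{D}(V_i)\mathcal{D}(V_j)\le\frac12(\sum_i\mathcal{D}(V_i))^2=\frac12$ and $0\le d\le 1$, we always have $0\le q(\mathcal{R})\le\frac12$. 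The crucial estimate is the defect inequality: if $\mathcal{Q}$ refines $\mathcal{R}$ and $L(\mathcal{R},\mathcal{Q})$ denotes the quantity appearing in Item 3 (with $\mathcal{R},\mathcal{Q}$ in place of $\mathcal{P},\mathcal{Q}$), then $q(\mathcal{Q})-q(\mathcal{R})\ge 2\,L(\mathcal{R},\mathcal{Q})^2$. To prove it I would, for each unordered pair of distinct parts $R_1,R_2$ of $\mathcal{R}$ with positive $\mathcal{D}$-weight, view $d(Q_1,Q_2)$ as a random variable on the pairs $(Q_1,Q_2)$ with $Q_1\subseteq R_1$, $Q_2\subseteq R_2$, weighted by $\mathcal{D}(Q_1)\mathcal{D}(Q_2)/\big(\mathcal{D}(R_1)\mathcal{D}(R_2)\big)$; its mean is $d(R_1,R_2)$ because edge weights are additive over the refinement, so the contribution of this pair to $q(\mathcal{Q})$ exceeds its contribution to $q(\mathcal{R})$ by the corresponding unnormalized variance $\sum_{Q_1,Q_2}\mathcal{D}(Q_1)\mathcal{D}(Q_2)(d(Q_1,Q_2)-d(R_1,R_2))^2$. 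Summing over pairs (discarding the nonnegative contribution of $\mathcal{Q}$-pairs lying inside a single $\mathcal{R}$-part, and recalling the convention $d(X,Y)=0$ when $\mathcal{D}(X)=0$ or $\mathcal{D}(Y)=0$), and then applying Cauchy--Schwarz twice --- once inside each pair $(R_1,R_2)$ using $\sum_{Q_1\subseteq R_1,Q_2\subseteq R_2}\mathcal{D}(Q_1)\mathcal{D}(Q_2)=\mathcal{D}(R_1)\mathcal{D}(R_2)$, and once over the pairs using $\sum_{\{R_1,R_2\}}\mathcal{D}(R_1)\mathcal{D}(R_2)\le\frac12$ --- yields the claimed bound.

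With this in hand I would iterate. Set $\mathcal{P}^{(0)}:=\mathcal{P}_0$, and given $\mathcal{P}^{(i)}$ apply Lemma \ref{lem:reg} with input partition $\mathcal{P}^{(i)}$ and parameters $\varepsilon=\mathcal{E}(|\mathcal{P}^{(i)}|)$, $m=|\mathcal{P}^{(i)}|$, obtaining a refinement $\mathcal{P}^{(i+1)}$ of $\mathcal{P}^{(i)}$ which is $\mathcal{E}(|\mathcal{P}^{(i)}|)$-regular and has at most $T_{\ref{lem:reg}}(\mathcal{E}(|\mathcal{P}^{(i)}|),|\mathcal{P}^{(i)}|)$ parts. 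I stop as soon as $L(\mathcal{P}^{(i)},\mathcal{P}^{(i+1)})\le\mathcal{E}(0)$ and output $\mathcal{P}:=\mathcal{P}^{(i)}$, $\mathcal{Q}:=\mathcal{P}^{(i+1)}$; then $\mathcal{Q}$ refines $\mathcal{P}$ refines $\mathcal{P}_0$, Item 2 holds because $\mathcal{Q}$ is $\mathcal{E}(|\mathcal{P}|)$-regular, and Item 3 holds by the stopping condition. The process cannot run forever: while it has not stopped, each step increases $q$ by more than $2\mathcal{E}(0)^2$, so $q(\mathcal{P}^{(k)})>2k\,\mathcal{E}(0)^2$ after $k$ steps, which is impossible once $k>1/(4\mathcal{E}(0)^2)$; hence it stops within $k_0:=\lceil 1/(4\mathcal{E}(0)^2)\rceil$ steps. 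Finally, for Item 1, defining $g(0)=m$ and $g(t+1)=\max\{T_{\ref{lem:reg}}(\mathcal{E}(j),j):1\le j\le g(t)\}$ (a finite maximum, non-decreasing in $t$ once we use a regularity bound with $T_{\ref{lem:reg}}(\varepsilon,m)\ge m$), a trivial induction gives $|\mathcal{P}^{(i)}|\le g(i)$, so one may take $S_{\ref{lem:strong_reg}}(\mathcal{E},m):=g(k_0)$.

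I do not anticipate any substantial obstacle here: all the tools used in \cite{AFKS} are available in weighted form, and the weighted arithmetic is essentially identical to the unweighted one. The three points that do require (routine) care are: checking that the index $q$ stays bounded, handled by $\sum_{i<j}\mathcal{D}(V_i)\mathcal{D}(V_j)\le\frac12$; carrying parts of zero $\mathcal{D}$-weight through all the sums, where one simply falls back on the conventions fixed when $d(X,Y)$ was defined; and bounding $|\mathcal{Q}|$ despite $\mathcal{E}$ being an arbitrary, possibly non-monotone, function --- which is exactly why the recursion defining $g$ takes a maximum over all $j\le g(t)$ rather than just substituting $j=g(t)$.
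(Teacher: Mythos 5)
Your proof is correct and follows essentially the same route as the paper's: iterate the vertex-weighted Szemer\'edi lemma, track the mean-square index, and stop once a refinement step fails to produce a large index increment. The only cosmetic differences are that you phrase the stopping criterion directly as $L(\mathcal{P}^{(i)},\mathcal{P}^{(i+1)}) \leq \mathcal{E}(0)$ with an explicit defect inequality $q(\mathcal{Q})-q(\mathcal{R})\ge 2L^2$ (the paper instead stops when $q(\mathcal{P}_{i+1}) \leq q(\mathcal{P}_i)+\mathcal{E}^2(0)$ and derives the $L\le\mathcal{E}(0)$ bound afterwards via Cauchy--Schwarz), and you handle a non-monotone $\mathcal{E}$ by maximizing over all $j \le g(t)$ in the recursion rather than assuming WLOG that $\mathcal{E}$ is decreasing; there is also a harmless off-by-one in the final bound $S$ (one should take $g(k_0+1)$ rather than $g(k_0)$, since the output $\mathcal{Q}$ is $\mathcal{P}^{(i+1)}$ and the stopping index $i$ can be as large as $k_0$).
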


Our last two lemmas are vertex-weighted analogues of well-known corollaries to Szemer\'edi's regularity lemma and the strong regularity lemma, respectively. The ``unweighted" versions of these corollaries were used in \cite{AS_hereditary} in order to prove that every hereditary property is testable in the standard model.

\begin{lemma}\label{lem:Turan_Ramsey}
	For every integer $t \geq 1$ and for every $\delta > 0$ there is $\zeta = \zeta_{\ref{lem:Turan_Ramsey}}(t,\delta) > 0$, such that the following holds. Let $(G,\mathcal{D})$ be a vertex-weighted graph such that every vertex in $G$ has weight less than $\zeta$. Then there are pairwise-disjoint vertex-sets $Q_1,\dots,Q_t \subseteq V(G)$ with the following properties.
	\begin{enumerate}
		\item $\mathcal{D}(Q_i) \geq \zeta$ for every $1 \leq i \leq t$.
		\item $(Q_i,Q_j)$ is $\delta$-regular for every $1 \leq i < j \leq t$.
		\item Either all pairs $(Q_i,Q_j)$ have density at least $\frac{1}{2}$, or all pairs $(Q_i,Q_j)$ have density less than $\frac{1}{2}$.
	\end{enumerate}
\end{lemma}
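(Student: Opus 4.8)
The plan is to combine the vertex-weighted regularity lemma (Lemma~\ref{lem:reg}) with the classical Ramsey theorem. The only real difficulty is that Lemma~\ref{lem:reg} does not produce an ``equitable'' partition (one in which all parts carry roughly equal $\mathcal{D}$-weight), so the usual Tur\'an-type argument on the reduced ``cluster graph'' is unavailable --- in fact, if one part is extremely heavy the reduced graph can look like a star, which contains no large clique at all. I will get around this by first refining a balanced initial partition so that \emph{every} part of the regular partition is light, and then replacing the Tur\'an step by a weighted random selection.

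Concretely, let $s = R(t,t)$ be the two-color Ramsey number guaranteeing a monochromatic $K_t$ in every red/blue coloring of $K_s$, put $a := 4\binom{s}{2}+1$, set $\varepsilon := \min\{\delta,\tfrac{1}{100 s^2}\}$, let $T := T_{\ref{lem:reg}}(\varepsilon, 2a)$, and finally set $\zeta := \tfrac{1}{4T}$, so that $\zeta$ depends only on $t$ and $\delta$ and $\zeta \le \tfrac{1}{2a}$ (as $T \ge 2a$). Given $(G,\mathcal{D})$ with all vertex-weights below $\zeta$, I would first greedily build a partition $\mathcal{P}_0$ of $V(G)$ by repeatedly collecting vertices into a part until its weight first exceeds $\tfrac{1}{2a}$; since each vertex adds less than $\zeta\le\tfrac{1}{2a}$, every part so formed has weight in $(\tfrac{1}{2a},\tfrac1a)$, with at most one leftover part of weight $\le\tfrac{1}{2a}$, so $|\mathcal{P}_0|\le 2a$ and every part of $\mathcal{P}_0$ has weight at most $\tfrac1a$. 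Applying Lemma~\ref{lem:reg} with parameter $\varepsilon$ and initial partition $\mathcal{P}_0$ yields an $\varepsilon$-regular partition $\mathcal{P}=\{V_1,\dots,V_r\}$ with $r\le T$ that refines $\mathcal{P}_0$; since it refines $\mathcal{P}_0$, every part satisfies $\mathcal{D}(V_i)\le\tfrac1a$. Let $H=\{i:\mathcal{D}(V_i)\ge\zeta\}$. Then $\sum_{i\notin H}\mathcal{D}(V_i)<r\zeta\le T\zeta=\tfrac14$, so $\mathcal{D}(H):=\sum_{i\in H}\mathcal{D}(V_i)>\tfrac34$, and combined with $\mathcal{D}(V_i)\le\tfrac1a$ this gives $|H|>\tfrac{3a}{4}\ge s$.

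The selection step is probabilistic. I would sample $i_1,\dots,i_s$ from $H$ independently, with $\Pr[i_k=i]=\mathcal{D}(V_i)/\mathcal{D}(H)$. For fixed $k\ne\ell$, the probability that $i_k=i_\ell$ is $\sum_{i\in H}\mathcal{D}(V_i)^2/\mathcal{D}(H)^2\le\max_i\mathcal{D}(V_i)/\mathcal{D}(H)<\tfrac2a$, and the probability that $(V_{i_k},V_{i_\ell})$ fails to be $\varepsilon$-regular is at most $2\,\mathcal{D}(H)^{-2}\sum\mathcal{D}(V_i)\mathcal{D}(V_j)<4\varepsilon$, where the last sum is over all pairs $\{i,j\}$ that are not $\varepsilon$-regular and we used the defining property of an $\varepsilon$-regular partition. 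A union bound over the $\binom{s}{2}$ index-pairs, together with the choices of $a$ and $\varepsilon$, shows that with positive probability the $i_k$ are pairwise distinct \emph{and} every pair $(V_{i_k},V_{i_\ell})$ is $\varepsilon$-regular; I would fix such a sample. Coloring $\{k,\ell\}$ red if $d(V_{i_k},V_{i_\ell})\ge\tfrac12$ and blue otherwise, Ramsey's theorem produces $t$ indices spanning a monochromatic clique. The corresponding $t$ parts are pairwise disjoint, each has $\mathcal{D}$-weight at least $\zeta$ (they lie in $H$), each pair among them is $\varepsilon$-regular hence $\delta$-regular (since $\varepsilon\le\delta$), and all of these pairs have density $\ge\tfrac12$ or all have density $<\tfrac12$ --- precisely properties 1--3.

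The step I expect to be the crux is not any single calculation but the overall design: realizing that one must first force all parts of the regular partition to be light (via the balanced initial partition), and then realizing that the Tur\'an step has to be replaced by a weighted sample whose analysis hinges on the small collision probability $\sum_i\mathcal{D}(V_i)^2\le\max_i\mathcal{D}(V_i)\le\tfrac1a$. Once these two ideas are in place, the rest --- choosing $\zeta$ small enough, \emph{after} $T$ has been fixed, so that the parts of weight below $\zeta$ together carry weight less than $\tfrac14$, and verifying $\varepsilon\le\delta$ --- is routine bookkeeping.
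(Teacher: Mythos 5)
Your proof is correct and follows essentially the same strategy as the paper's: build a balanced initial partition whose parts all have weight $O(1/a)$, apply the vertex-weighted regularity lemma refining it, select representative parts by weighted random sampling, and finish with Ramsey's theorem. The only differences are bookkeeping --- the paper invokes Lemma~\ref{prop:balanced_partition} to get exactly $a=4^t$ blocks and samples one part from each block (so distinctness is automatic), whereas you build the balanced partition greedily, pool all heavy parts, sample $s=R(t,t)$ indices with replacement, and add a collision bound --- but the underlying argument is the same.
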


\begin{proof}
	Setting
	$a = 4^t$ and
	$\varepsilon = \frac{\delta}{4a^4}$,
	we will prove the lemma with
	$$
	\zeta = \zeta_{\ref{lem:Turan_Ramsey}}(t,\delta) =
	\frac{1}{4a^2 \cdot T_{\ref{lem:reg}}(\varepsilon,a)} \; .
	$$	
	Let $(G,\mathcal{D})$ satisfying $\mathcal{D}(v) < \zeta$ for every $v \in V(G)$. Apply Lemma \ref{prop:balanced_partition} with $U = V(G)$, with the distribution $\mathcal{D}$, and with $a$ as defined above. Lemma \ref{prop:balanced_partition} supplies a partition $V(G) = U_1 \cup \dots \cup U_a$ such that $\mathcal{D}(U_i) \geq \frac{1}{2a}$ for every $1 \leq i \leq a$. Now apply Lemma \ref{lem:reg} to $(G,\mathcal{D})$ with parameter $\varepsilon$ and with the partition $\mathcal{P}_0 := \{U_1,\dots,U_a\}$, to obtain an $\varepsilon$-regular partition $\mathcal{P}$ which refines $\mathcal{P}_0$. For each $1 \leq i \leq a$, put $\mathcal{P}_i = \{P \in \mathcal{P} : P \subseteq U_i\}$, and sample $P_i \in \mathcal{P}_i$ with probability proportional to the weight of the parts, i.e.
	$P_i = P$ with probability $\frac{\mathcal{D}(P)}{\mathcal{D}(U_i)}$ for every $P \in \mathcal{P}_i$. We claim that with positive probability, $\mathcal{D}(P_i) \geq \zeta$ for every $1 \leq i \leq a$, and all pairs $(P_i,P_j)$ are $\delta$-regular. For every $1 \leq i \leq a$, the probability that
	$\mathcal{D}(P_i) < \zeta$ is less than
	$\frac{\zeta \cdot |\mathcal{P}|}{\mathcal{D}(U_i)} \leq
	\frac{\zeta \cdot T_{\ref{lem:reg}}(\varepsilon,a)}{1/2a} \leq \frac{1}{2a}$,
	where in the first inequality we used the guarantees of Lemma \ref{lem:reg}. By the union bound, with probability at least $\frac{1}{2}$ we have $\mathcal{D}(P_i) \geq \zeta$ for every $1 \leq i \leq a$. Next, observe that since $\mathcal{P}$ is $\varepsilon$-regular and as $\varepsilon \leq \delta$, the probability that $(P_i,P_j)$ is not $\delta$-regular (for some specific $1 \leq i < j \leq a$) is at most $\frac{\varepsilon}{\mathcal{D}(U_i)\mathcal{D}(U_j)} \leq
	4a^2 \varepsilon \leq \frac{1}{a^2}$.
	So by taking the union bound over all pairs $1 \leq i < j \leq a$, we get that with probability at least
	$1 - \binom{a}{2} \cdot \frac{1}{a^2} > \frac{1}{2}$, all pairs $(P_i,P_j)$ are $\delta$-regular. This proves our assertion.
	
	We thus showed that there is a choice of $P_1,\dots,P_a$ such that $\mathcal{D}(P_i) \geq \zeta$ for every $1 \leq i \leq a$ and such that $(P_i,P_j)$ is $\delta$-regular for every $1 \leq i < j \leq a$. Now consider an auxiliary graph on $[a]$ in which $\{i,j\}$ is an edge if $d(P_i,P_j) \geq \frac{1}{2}$ and $\{i,j\}$ is a non-edge if $d(P_i,P_j) < \frac{1}{2}$. As $a = 4^t$, a well-known bound on Ramsey numbers implies that this graph contains either a clique or an independent set $\{i_1,\dots,i_t\}$. Then
	$Q_1 = P_{i_1},\dots,Q_t = P_{i_t}$ satisfy the requirements of the lemma.
\end{proof}

\begin{lemma}\label{lem:representatives}
	For every function $\mathcal{E} : \mathbb{N} \rightarrow (0,1)$ and for every integer $m$, there is $S = S_{\ref{lem:representatives}}(\mathcal{E},m) > 0$ such that for every vertex-weighted graph $(G,\mathcal{D})$ and for every partition $\mathcal{P}_0$ of $V(G)$ having size at most $m$, there is a partition $\mathcal{P} = \{P_0,P_1,\dots,P_r\}$ of $V(G)$ and vertex-sets $Q_i \subseteq P_i$ for $1 \leq i \leq r$, such that the following holds:
	\begin{enumerate}
		\item $\mathcal{D}(P_0) < \mathcal{E}(0)$.
		\item For every $1 \leq i \leq r$, $P_i$ is contained in some part of $\mathcal{P}_0$.
		\item $\mathcal{D}(Q_i) \geq 1/S$ for every $1 \leq i \leq r$. In particular, $r \leq S$.
		\item For every $1 \leq i < j \leq r$, the pair $(Q_i,Q_j)$ is $\mathcal{E}(r)$-regular.
		\item $\sum_{1 \leq i < j \leq r}{\mathcal{D}(P_i)\mathcal{D}(P_j) \cdot |d(Q_i,Q_j) - d(P_i,P_j)|} \leq \mathcal{E}(0)$.
	\end{enumerate}
\end{lemma}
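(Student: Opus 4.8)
The plan is to derive this from the strong regularity lemma for vertex-weighted graphs (Lemma \ref{lem:strong_reg}) together with two first-moment arguments. As a harmless reduction I would first assume $\mathcal{E}$ is decreasing, since replacing $\mathcal{E}$ by its decreasing minorant $n\mapsto\min\{\mathcal{E}(0),\dots,\mathcal{E}(n)\}$ only strengthens the conclusion (an $\mathcal{E}'(r)$-regular pair is $\mathcal{E}(r)$-regular whenever $\mathcal{E}'\le\mathcal{E}$, and the minorant agrees with $\mathcal{E}$ at $0$). Then I would set $\mathcal{E}'(0):=\mathcal{E}(0)/20$ and $\mathcal{E}'(n):=\min\{\mathcal{E}(n),\,\mathcal{E}(0)^2/(40n^2)\}$ for $n\ge 1$, put $S':=S_{\ref{lem:strong_reg}}(\mathcal{E}',m)$, and declare the constant of the present lemma to be $S:=S_{\ref{lem:representatives}}(\mathcal{E},m):=\lceil 20S'/\mathcal{E}(0)\rceil$. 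Note there is no circularity here: $\mathcal{E}'$ is defined outright, then $S'$, then $S$.

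Next I would apply Lemma \ref{lem:strong_reg} to $(G,\mathcal{D})$ with the partition $\mathcal{P}_0$ and the function $\mathcal{E}'$, obtaining a refinement $\mathcal{R}=\{V_1,\dots,V_k\}$ of $\mathcal{P}_0$ and a refinement $\mathcal{Q}$ of $\mathcal{R}$ with $|\mathcal{Q}|\le S'$ (hence $k\le S'$), with $\mathcal{Q}$ being $\mathcal{E}'(k)$-regular, and with $\sum\mathcal{D}(Q)\mathcal{D}(Q')\,|d(Q,Q')-d(V,V')|\le\mathcal{E}'(0)$ over unordered pairs $V\ne V'$ in $\mathcal{R}$ and parts $Q,Q'\in\mathcal{Q}$ with $Q\subseteq V$, $Q'\subseteq V'$. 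Then, independently for each $i$ with $\mathcal{D}(V_i)>0$, I would sample a representative $Q_i\in\mathcal{Q}$ with $Q_i\subseteq V_i$ with probability proportional to weight, $\Pr[Q_i=Q]=\mathcal{D}(Q)/\mathcal{D}(V_i)$ (parts with $\mathcal{D}(V_i)=0$ will simply go into $P_0$). The weight-proportional sampling is chosen so that three quantities are small in expectation. First, expanding $\mathbb{E}\big[\sum_{\{i,j\}}\mathcal{D}(V_i)\mathcal{D}(V_j)\,|d(Q_i,Q_j)-d(V_i,V_j)|\big]$ reproduces exactly the left-hand side of Item 3 of Lemma \ref{lem:strong_reg}, so this expectation is $\le\mathcal{E}'(0)=\mathcal{E}(0)/20$. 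Second, the same expansion shows that the expected total $\mathcal{D}(V_i)\mathcal{D}(V_j)$-weight of the pairs $\{i,j\}$ for which $(Q_i,Q_j)$ is \emph{not} $\mathcal{E}'(k)$-regular is at most $\sum\mathcal{D}(Q)\mathcal{D}(Q')$ over the non-$\mathcal{E}'(k)$-regular pairs of $\mathcal{Q}$, which is $\le\mathcal{E}'(k)$. Third, $\mathbb{E}\big[\sum_{i:\,\mathcal{D}(V_i)>0,\ \mathcal{D}(Q_i)<1/S}\mathcal{D}(V_i)\big]=\sum_{Q\in\mathcal{Q}:\,\mathcal{D}(Q)<1/S}\mathcal{D}(Q)\le|\mathcal{Q}|/S\le S'/S\le\mathcal{E}(0)/20$. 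By Markov's inequality and a union bound, with positive probability one has simultaneously: (i) $\sum_{\{i,j\}}\mathcal{D}(V_i)\mathcal{D}(V_j)\,|d(Q_i,Q_j)-d(V_i,V_j)|<\mathcal{E}(0)$; (ii) the non-$\mathcal{E}'(k)$-regular sampled pairs carry total $\mathcal{D}(V_i)\mathcal{D}(V_j)$-weight at most $5\mathcal{E}'(k)$; (iii) $\sum_{i:\,\mathcal{D}(V_i)>0,\ \mathcal{D}(Q_i)<1/S}\mathcal{D}(V_i)<\mathcal{E}(0)/4$. Fix such an outcome.

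Now I would build the output. Let $\mathcal{R}_1$ be the parts $V_i$ with $\mathcal{D}(V_i)>0$ and $\mathcal{D}(Q_i)\ge 1/S$; by (iii), the parts of $\mathcal{R}$ outside $\mathcal{R}_1$ have total weight $<\mathcal{E}(0)/4$. On $\mathcal{R}_1$ form the ``bad graph'' $H$ with an edge $\{i,j\}$ whenever $(Q_i,Q_j)$ is not $\mathcal{E}'(k)$-regular; by (ii), $\sum_{\{i,j\}\in E(H)}\mathcal{D}(V_i)\mathcal{D}(V_j)\le 5\mathcal{E}'(k)$. I would then run the greedy procedure on $H$: process the parts of $\mathcal{R}_1$ in non-increasing weight order, adding a part to an independent set unless it has a previously-added (hence at least as heavy) neighbor. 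The added parts form an independent set, and each skipped part $j$ can be charged to an edge $\{i,j\}$ of $H$ joining it to an added part $i$ (so $\mathcal{D}(V_j)\le\mathcal{D}(V_i)$); no edge is charged twice, so the skipped parts have total weight at most $\sum_{\{i,j\}\in E(H)}\min(\mathcal{D}(V_i),\mathcal{D}(V_j))\le\sum_{\{i,j\}\in E(H)}\sqrt{\mathcal{D}(V_i)\mathcal{D}(V_j)}\le\sqrt{\binom{k}{2}}\cdot\sqrt{5\mathcal{E}'(k)}\le\mathcal{E}(0)/4$, where the last step uses $\mathcal{E}'(k)\le\mathcal{E}(0)^2/(40k^2)$. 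Take $P_1,\dots,P_r$ to be the added parts with representatives $Q_1,\dots,Q_r$, and let $P_0$ be the union of all remaining parts of $\mathcal{R}$. Then: $\mathcal{D}(P_0)<\mathcal{E}(0)/4+\mathcal{E}(0)/4<\mathcal{E}(0)$ (condition 1); each $P_i\in\mathcal{R}$ lies in a part of $\mathcal{P}_0$ (condition 2); $\mathcal{D}(Q_i)\ge 1/S$ and, since the $Q_i$ are disjoint with total weight $\le 1$, $r\le S$ (condition 3); every $(Q_i,Q_j)$ is $\mathcal{E}'(k)$-regular since $\{P_1,\dots,P_r\}$ is independent in $H$, hence $\mathcal{E}(r)$-regular because $\mathcal{E}'(k)\le\mathcal{E}(k)\le\mathcal{E}(r)$ (using $r\le k$ and $\mathcal{E}$ decreasing) and regularity is monotone in the parameter (condition 4); and condition 5's left-hand side is at most $\sum_{\{i,j\}}\mathcal{D}(V_i)\mathcal{D}(V_j)\,|d(Q_i,Q_j)-d(V_i,V_j)|<\mathcal{E}(0)$ by (i).

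I expect the only genuinely delicate point to be estimate (ii) combined with the greedy/Cauchy--Schwarz bound on the weight of the skipped parts: this is precisely what allows one to pass to an independent set in the bad graph without discarding more than $O(\mathcal{E}(0))$ of the weight, and it works only because the bad graph has at most $k\le S'$ vertices, so that $\sqrt{\binom{k}{2}}\cdot\sqrt{\mathcal{E}'(k)}$ is controlled once $\mathcal{E}'(k)$ is pushed below $\mathcal{E}(0)^2/(40k^2)$ — a requirement that depends only on the single value $k$ and hence introduces no circular dependence on $S'$. The remaining ingredients (the strong regularity lemma, the density-approximation expansion, and Markov's inequality) are routine.
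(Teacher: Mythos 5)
Your proof is correct, and it shares the skeleton of the paper's argument — apply the strong regularity lemma (Lemma \ref{lem:strong_reg}) to $(\mathcal{P}_0,\mathcal{E}')$, then pick a weight-proportional random representative $Q_i\in\mathcal{Q}$ inside each part $V_i$ and close by first-moment estimates — but the final ``cleanup'' step is genuinely different. The paper discards parts \emph{before} sampling: it sets $P_0$ to be the union of all parts of $\mathcal{P}'$ of weight below $\varepsilon/|\mathcal{P}'|$, which gives a uniform lower bound $\mathcal{D}(P_i)\ge\varepsilon/|\mathcal{P}'|$ on every surviving part; with $\mathcal{E}'(r)\le\varepsilon^2/(2r^4)$ this makes the per-pair failure probability small enough ($\le\tfrac12 r^{-2}$) that a straight union bound over \emph{all} pairs yields, with positive probability, \emph{no} irregular sampled pair at all. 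You instead allow irregular sampled pairs, control their total $\mathcal{D}(V_i)\mathcal{D}(V_j)$-weight in expectation, and then post-discard a low-weight vertex cover of the ``bad graph'' $H$ via the heaviest-first greedy, bounding the discarded weight with Cauchy--Schwarz ($\sum_{\{i,j\}\in E(H)}\min(\mathcal{D}(V_i),\mathcal{D}(V_j))\le\sqrt{|E(H)|\cdot\sum\mathcal{D}(V_i)\mathcal{D}(V_j)}$). The trade-off: your route only needs $\mathcal{E}'(n)\lesssim\mathcal{E}(0)^2/n^2$ rather than $\lesssim\mathcal{E}(0)^2/n^4$, so the strong regularity lemma is invoked with a slightly gentler regularity function, but at the cost of the extra greedy/independent-set machinery; the paper's route is shorter and needs no post-sampling removal. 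Both are sound, and all the quantitative steps in your argument (the three expectations, the $1/20+1/5+1/5<1$ union bound, the $\sqrt{\binom{k}{2}}\sqrt{5\mathcal{E}'(k)}\le\mathcal{E}(0)/4$ computation, monotonicity of $\varepsilon$-regularity in $\varepsilon$, and $r\le k$ for passing from $\mathcal{E}'(k)$ to $\mathcal{E}(r)$) check out.
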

\begin{proof}
	We may and will assume $\mathcal{E}$ is monotone decreasing\footnote{Indeed, we can replace $\mathcal{E}$ with
		$\mathcal{E}'(r) = \min_{s \leq r}{\mathcal{E}(s)}$, which is clearly monotone decreasing.}.
	For convenience, put $\varepsilon = \mathcal{E}(0)$.
	Let $\mathcal{E}' : \mathbb{N} \rightarrow (0,1)$ be the function $\mathcal{E}'(r) =
	\min
	\left\{
	\mathcal{E}(r), \frac{\varepsilon^2}{2r^4}, \frac{\varepsilon}{3}
	\right\}$. We will show that one can choose 
	$S = S_{\ref{lem:representatives}}(\mathcal{E},m) := \frac{3s^3}{\varepsilon}$, where $s := S_{\ref{lem:strong_reg}}(\mathcal{E}',m)$.
	Apply Lemma \ref{lem:strong_reg} to $(G,\mathcal{D})$ with parameter $\mathcal{E}'$ and with the given partition $\mathcal{P}_0$, to obtain partitions $\mathcal{P}'$ and $\mathcal{Q}$ such that $\mathcal{P}'$ refines $\mathcal{P}_0$, $\mathcal{Q}$ refines $\mathcal{P}'$, and Items 1-3 in Lemma \ref{lem:strong_reg} hold.
	Let $P_0$ be the union of all parts of $\mathcal{P}'$ of weight less than $\varepsilon/|\mathcal{P}'|$, and let $P_1,\dots,P_r$ be the parts of $\mathcal{P}'$ of weight at least $\varepsilon/|\mathcal{P}'|$. Then we have $\mathcal{D}(P_0) < |\mathcal{P}'| \cdot \varepsilon/|\mathcal{P}'| = \varepsilon$, establishing Item 1.  Now set $\mathcal{P} = \{P_0,P_1,\dots,P_r\}$. It is evident that Item 2 holds.
	
	For each $1 \leq i \leq r$, denote
	$\mathcal{Q}_i = \{Q \in \mathcal{Q} : Q \subseteq P_i\}$, and sample $Q_i \in \mathcal{Q}_i$ with probability proportional to the weight of the parts; in other words, for each $Q \in \mathcal{Q}_i$, the probability that $Q_i = Q$ is $\frac{\mathcal{D}(Q)}{\mathcal{D}(P_i)}$. We will show that with positive probability, $Q_1,\dots,Q_r$ satisfy Items 3-5. For each $1 \leq i \leq r$, the probability that $\mathcal{D}(Q_i) < \frac{\mathcal{D}(P_i)}{3r|\mathcal{Q}|}$ is less than $|\mathcal{Q}| \cdot \frac{1}{3r|\mathcal{Q}|} = \frac{1}{3r}$. By the union bound, the probability that there is $1 \leq i \leq r$ for which
	$\mathcal{D}(Q_i) < \frac{\mathcal{D}(P_i)}{3r|\mathcal{Q}|}$ is less than $\frac{1}{3}$. So with probability larger than $\frac{2}{3}$, for every $1 \leq i \leq r$ we have
	$$
	\mathcal{D}(Q_i) \geq \frac{\mathcal{D}(P_i)}{3r|\mathcal{Q}|} \geq \frac{\varepsilon}{3|\mathcal{P}'|^2|\mathcal{Q}|} \geq \frac{\varepsilon}{3|\mathcal{Q}|^3} \geq
	\frac{\varepsilon}{3s^3} = \frac{1}{S},
	$$
	where the last inequality is due to our choice of $\mathcal{Q}$ via Lemma \ref{lem:strong_reg}.
	
	We now prove that Item 4 holds with probability greater than $\frac{2}{3}$. Fix any $1 \leq i < j \leq r$.
	Since $\mathcal{Q}$ is
	$\varepsilon'$-regular with
	$\varepsilon' = \mathcal{E}'(|\mathcal{P}|') \leq
	\min
	\left\{
	\mathcal{E}(|\mathcal{P}'|), \frac{\varepsilon^2}{2|\mathcal{P}'|^4}
	\right\}$, and since $\mathcal{E}(|\mathcal{P}'|) \leq \mathcal{E}(r)$ (by the monotonicity of $\mathcal{E}$),
	the probability that the pair $(Q_i,Q_j)$ is not $\mathcal{E}(r)$-regular is at most $\frac{\varepsilon^2/(2|\mathcal{P}'|^4)}{\mathcal{D}(P_i)\mathcal{D}(P_j)} \leq \frac{1}{2}|\mathcal{P}'|^{-2} \leq \frac{1}{2}r^{-2}$, where the first inequality holds because $\mathcal{D}(P_i),\mathcal{D}(P_j) \geq \varepsilon/|\mathcal{P}'|$. By the union bound over all pairs $1 \leq i < j \leq r$, the probability that there is $1 \leq i < j \leq r$ for which $(Q_i,Q_j)$ is not $\mathcal{E}(r)$-regular is at most
	$\binom{r}{2} \cdot \frac{1}{2}r^{-2} < \frac{1}{3}$.
	
	It remains to show that Item 5 holds with probability at least $\frac{2}{3}$. Observe that
	\begin{align*}
	&\mathbb{E}\left[
	\sum_{1 \leq i < j \leq r}{\mathcal{D}(P_i)\mathcal{D}(P_j) \cdot |d(Q_i,Q_j) - d(P_i,P_j)|} \right]
	= \\&
	\sum_{1 \leq i < j \leq r}{
		\sum_{Q'_i \in \mathcal{Q}_i, Q'_j \in \mathcal{Q}_j}
		{\mathcal{D}(Q'_i)\mathcal{D}(Q'_j) \cdot \left| d(Q'_i,Q'_j) - d(P_i,P_j) \right|}}
	\leq \frac{\varepsilon}{3},
	\end{align*}
	where in the inequality we used Item 3 of Lemma \ref{lem:strong_reg}, our choice of $\mathcal{E}'$, and the fact that $P_1,\dots,P_r \in \mathcal{P}'$. So by Markov's inequality, the probability that Item 5 fails is at most $\frac{1}{3}$, as required.
\end{proof}

\section{The Main Proof}\label{sec:positive}
In this section we prove the ``if'' direction of Theorem \ref{thm:main}.
In Subsection \ref{subsec:intu} we give a high-level overview of the main obstacle one needs to overcome in proving Theorem \ref{thm:main}, and
the main idea behind the way we overcome it. In Subsection \ref{subsec:main_lemma} we state and prove
Lemma \ref{lem:reg_main}, which constitutes the main ingredient in the proof of Theorem \ref{thm:main}. Finally, we prove (the ``if" direction of) Theorem \ref{thm:main} in Subsection \ref{subsec:main_result}.
%
%

\subsection{Proof overview}\label{subsec:intu}

\paragraph{The main difficulty:} Suppose ${\cal P }$ is an extendable hereditary graph property. We are given a graph $G$ and a distribution ${\cal D}$ so that $G$ is $\varepsilon$-far from ${\cal P}$ with respect to ${\cal D}$. Our goal is to show that a sample of $O(1)$ vertices\footnote{Throughout this subsection, $\Omega(1)$ and $O(1)$ mean positive quantities that depend only on $\varepsilon$ and not on $n$ or ${\cal D}$.} from $G$ finds with high probability (whp) an induced subgraph $F$ of $G$ which does not satisfy
${\cal P}$. There are two ways one can try to tackle this problem. First, one can take a blowup $G'$ of $G$, in which a vertex is replaced by a cluster of vertices whose size is proportional to the vertex's weight under ${\cal D}$, and thus (try to) ``reduce'' the problem to the non-weighted case. While this approach can allow one to handle some properties\footnote{Indeed, this is the approach used in \cite{Goldreich_VDF}.}, it seems that the main bottleneck is that a copy of $F$ in $G'$ does not correspond necessarily to a copy of $F$ in $G$, since $F$ might contain several of the vertices that replaced a vertex of $G$. Moreover, if this vertex $v$ has weight $\Omega(1)$ then even a sample of size $O(1)$ will very likely contain several of the vertices of $G'$  that replaced $v$.

A second approach would be to just reprove the result of \cite{AS_hereditary}, while replacing the regularity lemmas used there with regularity lemmas for vertex-weighted graphs. While such lemmas are indeed not hard to prove (see e.g. Lemmas \ref{lem:counting}-\ref{lem:representatives}), the main problem is again vertices of high weight. Now the issue is that clusters of the regular partition might
contain only a single vertex of high weight, a situation in which one would not be able to embed graphs $F$ that need to use more than one vertex from
the same cluster.

\paragraph{The key new idea:}
The main idea is then to prove a lemma that allows one to partition $G$ into three sets $X,Y,Z$
with the following properties: $(i)$ $Z$ will have total weight at most $\varepsilon/2$, $(ii)$ all vertices in $X$ will have weight at least
$\Omega(1)$, $(iii)$ $Y$ will have a highly regular Szemer\'edi partition, that is, there will be a partition of the vertices of $Y$ into sets $P_1,\ldots,P_r$ so that the bipartite graphs between all pairs $(P_i,P_j)$
are pseudo-random (or regular in the sense of the regularity lemma), $(iv)$ each of the clusters $P_i$ will have ``enough'' vertices, and $(v)$ for each $x \in X$ and set $P_i$, either $x$ will be connected to all vertices of $P_i$ or to none of them. We will now see how a partition with properties $(i)$--$(v)$ can allow one to test $\mathcal{P}$. Let us note that the actual structure we will use is much more complicated than is described in the above five properties (cf. Lemma \ref{lem:reg_main}), and that in the present discussion we intentionally oversimplify some technical aspects in order to highlight our main new idea. For example, we will not actually be able to guarantee that all pairs $(P_i,P_j)$ are pseudo-random (or that the measure of pseudo-randomness of these pairs is sufficient for our purposes); instead, as is common in this type of proofs, we will have ``representative sets" $Q_i \subseteq P_i$ such that all pairs $(Q_i,Q_j)$ are pseudo-random and most have roughly the same density as $(P_i,P_j)$. 

We first claim that $G[X \cup Y]$ (i.e. the graph induced by $X \cup Y$) is $\varepsilon/2$-far from satisfying ${\cal P}$.
Indeed, if this is not the case, then we can first turn the graph induced by these sets into a graph satisfying ${\cal P}$ by making
changes of total weight less than $\varepsilon/2$, and then use the fact that ${\cal P}$ is extendable and the fact that the
total weight of $Z$ is at most $\varepsilon/2$ in order to reconnect the vertices of $Z$ to $X \cup Y$ (and amongst themselves) so that the resulting graph will be in ${\cal P}$. The total weight of edges we thus change is less than $\varepsilon$, a contradiction.

We now examine the partition $P_1,\ldots,P_r$ of $Y$ and perform a ``cleaning'' procedure analogous to the one performed in applications of the
regularity lemma. By this we mean that we make (only!) within $Y$ changes of total weight less than $\varepsilon/2$ so that if after
these changes the set $Y$ contains an induced copy of some (bounded-size) graph $F$, then in the original graph, a sample of $O(1)$ vertices from $Y$ finds one such copy with high probability (whp). Here we will also rely on property $(iv)$ of the partition. The fact that $G[X \cup Y]$ is $\varepsilon/2$-far from satisfying ${\cal P}$ and that we made changes of total weight less than $\varepsilon/2$
when cleaning $Y$, means that $G[X \cup Y]$ (after the cleaning) indeed has an induced copy of a graph $F$ that does not satisfy ${\cal P}$. We now claim that a sample of size
$O(1)$ from $G$ (before the cleaning) finds a copy of $F$ whp. First, since the total weight of $Z$ is small, then sampling from $G$ is (effectively) like sampling from $G[X \cup Y]$. Let now $F_X$ (resp. $F_Y$) be the subgraph of $F$ induced by $X$ (resp. $Y$). By the above discussion, a sample of size
$O(1)$ finds a copy of $F_Y$ whp. Now, and this is the first crucial point, property $(v)$ mentioned above guarantees that the vertices of $X$ which form the copy of
$F_X$, form a copy of $F$ with {\em every} set of vertices in $Y$ which forms a copy of $F_Y$. Now, and this is the second crucial point, property $(ii)$ above guarantees that a sample of $O(1)$ vertices finds {\em the\footnote{By ``the'' we mean that $X$ might contain only a single copy of $F_X$, but this copy has to be of weight $\Omega(1)$. This is in sharp contrast to the situation within $Y$, where each copy of $F_Y$ might have very small weight, but the {\em total} weight of such copies must be $\Omega(1)$.}} copy of $F_X$ contained in $X$ whp. Altogether, the algorithm finds an induced copy of $F$ using $O(1)$ queries.

\paragraph{The new regularity lemma:}
As it turns out, one cannot hope to partition $G$ as described in the first paragraph above, and instead we will have to define a partition
with a much more complicated set of features. This is stated in Lemma \ref{lem:reg_main} in the next subsection. One of the main difficulties
is making sure that parts $P_i$ of the partition of $Y$ will not contain only few (or even a single) vertices of high weight (i.e. we want to guarantee property $(iv)$ stated above). This is done by making sure that the weight of the vertices in $Y$ is very small compared to the weight of the parts $P_1,\dots,P_r$. This in itself is challenging, because at the same time we need to have many parts $P_i$ in order to satisfy property $(v)$ above.
The proof of Lemma \ref{lem:reg_main} will use some of the lemmas of Section \ref{sec:preliminary}, most notably Lemma \ref{lem:representatives}, which we will need to iterate (at least implicitly) in order to find the sought-after partition in the statement of Lemma \ref{lem:reg_main}.

\subsection{The Key Lemma}\label{subsec:main_lemma}

In this subsection we state and prove Lemma \ref{lem:reg_main}, which is the main ingredient in the proof of the ``if'' direction of Theorem \ref{thm:main}.

\begin{lemma}\label{lem:reg_main}
	For every function $\Psi : \mathbb{N} \rightarrow \mathbb{N}$ and $\varepsilon > 0$ there is $S = S_{\ref{lem:reg_main}}(\Psi,\varepsilon) > 0$ such that for every vertex-weighted graph $(G,\mathcal{D})$ there is a partition $V(G) = X \cup Y \cup Z$, a partition $\mathcal{P} = \{P_1,\dots,P_r\}$ of $Y$, vertex-sets $Q_i \subseteq P_i$, and pairwise-disjoint vertex-sets $Q_{i,1}, \dots, Q_{i,t} \subseteq Q_i$, where $t = \Psi(|X|+r)$, such that the following holds:
	\begin{enumerate}
		\item $\mathcal{D}(Z) < \varepsilon$.
		\item Every vertex in $X$ has weight at least $1/S$.
		\item For every $x \in X$ and for every $1 \leq i \leq r$, either $x$ is adjacent to all vertices of $P_i$, or to none of the vertices of $P_i$.
		\item $\sum_{1 \leq i \leq r}\sum_{\{x,y\} \in \binom{P_i}{2}}{\mathcal{D}(x)\mathcal{D}(y)} \leq \varepsilon$.
		\item
		$\sum_{1 \leq i < j \leq r}{
			{\mathcal{D}(P_i)\mathcal{D}(P_j) \cdot |d(Q_i,Q_j) - d(P_i,P_j)|}
		} \leq \varepsilon$.
		\item For every $1 \leq i \leq r$, all pairs $(Q_{i,k},Q_{i,\ell})$ are $\frac{1}{\Psi(|X|+r)}$-regular, and either all pairs $(Q_{i,k},Q_{i,\ell})$ have density at least $\frac{1}{2}$, or all pairs $(Q_{i,k},Q_{i,\ell})$ have density less than $\frac{1}{2}$.
		\item For every $1 \leq i < j \leq r$ and $1 \leq k,\ell \leq t$, the pair $(Q_{i,k},Q_{j,{\ell}})$ is $\frac{1}{\Psi(|X|+r)}$-regular and $|d(Q_{i,k},Q_{j,\ell}) - d(Q_i,Q_j)| \leq \frac{1}{\Psi(|X|+r)}$.
		\item For every $1 \leq i \leq r$ and $1 \leq k \leq t$, $\mathcal{D}(Q_{i,k}) \geq 1/S$.
	\end{enumerate}
\end{lemma}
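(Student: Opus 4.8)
The plan is to build the decomposition $V(G)=X\cup Y\cup Z$ as follows. The set $X$ will consist of carefully chosen high-weight vertices, which directly gives property~2; the set $Z$ will be a low-weight ``garbage'' set, giving property~1; and $Y$, together with its partition $\mathcal{P}=\{P_1,\dots,P_r\}$ and the representatives $Q_i$, will be produced by the vertex-weighted regularity machinery of Section~\ref{sec:preliminary}. To get property~3 the partition of $Y$ must refine the partition of $V(G)\setminus X$ into neighborhood-types over $X$ (vertices $u,v$ having the same type iff $N(u)\cap X=N(v)\cap X$), which forces an initial partition of size roughly $2^{|X|}$; to get property~4 we also refine this initial partition by one supplied by Lemma~\ref{lem:heavy_cut} with parameter $\approx\varepsilon$, so that any further refinement has internal weight at most $\varepsilon$. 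Feeding this initial partition into Lemma~\ref{lem:representatives} with a suitably small function $\mathcal{E}$ produces $P_0,P_1,\dots,P_r$ and $Q_i\subseteq P_i$; then $Y:=P_1\cup\dots\cup P_r$ and $P_0\subseteq Z$ give properties~1 and~3, while properties~4 and~5 follow from the choice of the initial partition together with Item~5 of Lemma~\ref{lem:representatives}.

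Next I would, inside each $Q_i$, extract the pairwise-disjoint sets $Q_{i,1},\dots,Q_{i,t}$, $t=\Psi(|X|+r)$. Property~6 is exactly the output of Lemma~\ref{lem:Turan_Ramsey} applied to the vertex-weighted graph $(G[Q_i],\mathcal{D}_{Q_i})$ with $t$ parts and regularity parameter $\tfrac{1}{\Psi(|X|+r)}$ --- \emph{provided} every vertex of $Q_i$ has weight below $\zeta_{\ref{lem:Turan_Ramsey}}$ with respect to $\mathcal{D}_{Q_i}$, i.e.\ weight $\ll\mathcal{D}(Q_i)$ with respect to $\mathcal{D}$. Granting this, Lemma~\ref{lem:Turan_Ramsey} also yields $\mathcal{D}(Q_{i,k})\ge\zeta_{\ref{lem:Turan_Ramsey}}\cdot\mathcal{D}(Q_i)$, which combined with $\mathcal{D}(Q_i)\ge 1/S_{\ref{lem:representatives}}$ gives property~8; and since each $Q_{i,k}$ is then a fixed positive fraction of $Q_i$ by weight, property~7 follows from Item~1 of Lemma~\ref{lem:regular_pair} applied to the $\mathcal{E}(r)$-regular pairs $(Q_i,Q_j)$, provided $\mathcal{E}$ was taken small enough relative to $\tfrac{1}{\Psi(|X|+r)}$ and $\zeta_{\ref{lem:Turan_Ramsey}}$.

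Everything thus reduces to choosing $X$ so that no $Q_i$ has a \emph{dominating vertex} --- one of weight comparable to $\mathcal{D}(Q_i)$ --- and this is where the real work lies. One cannot take $X$ to be all vertices above a fixed weight threshold, since enlarging $X$ refines the initial partition (about $2^{|X|}$ classes), which shrinks $\mathcal{D}(Q_i)$, which in turn forces the threshold to be even smaller: a vicious circle. Instead $X$ is built \emph{iteratively} (this is the point at which Lemma~\ref{lem:representatives} is iterated, at least implicitly): after each application of Lemma~\ref{lem:representatives} one moves into $X$ every vertex that is too heavy relative to the $Q_i$ containing it, re-refines by the neighborhood-types of the enlarged $X$, applies Lemma~\ref{lem:representatives} again, and repeats; if at any stage the total weight of the current non-$X$ vertices drops below $\varepsilon$, one puts all of them into $Z$ and stops (making properties~3--8 trivially hold). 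Two things must be checked with care: (a) the function $\mathcal{E}$ fed to Lemma~\ref{lem:representatives}, the ``too heavy'' threshold, and the final constant $S=S_{\ref{lem:reg_main}}(\Psi,\varepsilon)$ have to be fixed in an order that breaks the circular dependence between them; and (b) the iteration must terminate after a number of rounds bounded in terms of $\varepsilon$ and $\Psi$ alone, so that $|X|$ (and hence $r$, and hence $S$) stays bounded and every vertex moved into $X$ has weight at least $1/S$ --- using that such a vertex is moved in only while the non-$X$ mass is still $\ge\varepsilon$. I expect (b) --- the termination and the boundedness of $|X|$, which should require a careful accounting/potential argument --- to be by far the hardest part; granting the decomposition, verifying properties~1--8 is a routine assembly of the lemmas above.
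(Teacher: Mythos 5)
Your high-level plan is the right one, and the second half (extracting the $Q_{i,k}$ inside each $Q_i$ via Lemma~\ref{lem:Turan_Ramsey}, deducing Item~8 from $\mathcal{D}(Q_{i,k})\ge\zeta\cdot\mathcal{D}(Q_i)$, and Item~7 from Item~1 of Lemma~\ref{lem:regular_pair}) is exactly what the paper does in its deduction of Lemma~\ref{lem:reg_main} from Lemma~\ref{lem:iterations}. You also correctly identify the crux of the whole argument: ensuring that every vertex remaining in $Y$ is much lighter than the $Q_i$ that contains it, while $|X|$ (which drives the size of the neighborhood-type refinement, hence the eventual lower bound on $\mathcal{D}(Q_i)$) stays controlled.

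However, the construction of $X$, $Y$, $Z$ that you propose --- repeatedly apply Lemma~\ref{lem:representatives}, move every vertex that is ``too heavy relative to its $Q_i$'' into $X$, refine by the new neighborhood types, and repeat --- is not what the paper does, and the part you flag as ``by far the hardest'' is in fact a genuine gap: as written, there is no argument that this loop terminates after a number of rounds (and with $|X|$) bounded purely in terms of $\varepsilon$ and $\Psi$. Each round can move as few as one vertex, the weight threshold for being moved decays extremely rapidly as $|X|$ grows (since it is tied to $1/S_{\ref{lem:representatives}}$ with an initial partition of size $\approx 2^{|X|}/\varepsilon$), and the stopping rule ``non-$X$ mass $<\varepsilon$'' does not obviously kick in, because the total weight pushed into $X$ can stay bounded away from $1-\varepsilon$ even as the number of rounds grows. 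One would need a careful potential argument, and it is not clear one exists in this formulation.

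The paper (in Lemma~\ref{lem:iterations}) sidesteps this entirely with a pigeonhole over precomputed weight bands, which is the content of the phrase ``iterate (at least implicitly).'' Concretely: define a rapidly growing sequence $s_1 < s_2 < \cdots < s_{2/\varepsilon}$ in advance via $s_i = S''(s_{i-1})$, where $S''(s)$ is chosen so that $1/S''(s)$ is already far below the guaranteed lower bound on $\mathcal{D}(Q_i)$ that Lemma~\ref{lem:representatives} would give when run with $|X|\le s$ (and with the $\Psi$-dependent accuracy and the $2^s/\varepsilon$-part initial partition). Stratify $V(G)$ into the disjoint weight bands $X_i=\{v:\,1/s_i\le\mathcal{D}(v)<1/s_{i-1}\}$. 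Since there are $2/\varepsilon$ bands, pigeonhole gives some band $X_i$ of total weight $\le\varepsilon/2$; put $X=X_1\cup\dots\cup X_{i-1}$, $Z'=X_i$, $Y'=$ everything lighter. Now $|X|\le s_{i-1}$, every vertex of $Y'$ has weight $<1/s_i = 1/S''(s_{i-1})$, and Lemma~\ref{lem:representatives} is applied \emph{once} to $Y'$; the precomputed gap between $s_{i-1}$ and $s_i=S''(s_{i-1})$ guarantees the ``no dominating vertex in $Q_i$'' condition automatically. This breaks the circular dependence and gives termination for free, whereas your formulation leaves termination as an open (and I believe genuinely problematic) step. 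I would encourage you to replace your iterative extraction of $X$ by this one-shot stratification.
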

Note that Items 2 and 8 in Lemma \ref{lem:reg_main} together imply that
$|X| + rt \leq S$.
The following lemma constitutes the main part of the proof of Lemma \ref{lem:reg_main}. After proving Lemma \ref{lem:iterations}, we deduce Lemma \ref{lem:reg_main} from Lemmas \ref{lem:iterations} and \ref{lem:Turan_Ramsey}.

\begin{lemma}\label{lem:iterations}
	For every function $\Psi : \mathbb{N} \rightarrow \mathbb{N}$ and $\varepsilon > 0$ there is $S = S_{\ref{lem:iterations}}(\Psi,\varepsilon) > 0$ such that for every vertex-weighted graph $(G,\mathcal{D})$ there is a partition $V(G) = X \cup Y \cup Z$, a partition $\mathcal{P} = \{P_1,\dots,P_r\}$ of $Y$ and vertex-sets $Q_i \subseteq P_i$ (for $1 \leq i \leq r$) such that Items 1-5 in Lemma \ref{lem:reg_main} hold (with respect to $S = S_{\ref{lem:iterations}}(\Psi,\varepsilon)$), and such that the following two conditions are satisfied.
	\begin{enumerate}[label=(\alph*)]
		\item For every $1 \leq i < j \leq r$, the pair $(Q_i,Q_j)$ is $\frac{1}{\Psi(|X|+r)}$-regular.
		\item For every $1 \leq i \leq r$ the following holds: $\mathcal{D}(Q_i) \geq 1/S$, and all vertices in $Q_i$ have weight less than $\frac{1}{\Psi(|X| + r)} \cdot \mathcal{D}(Q_i)$.
	\end{enumerate}
\end{lemma}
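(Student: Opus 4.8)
The plan is to obtain the desired partition by \emph{iterating} Lemma \ref{lem:representatives}. First I would reduce to the case that $\Psi$ is (weakly) increasing: replacing $\Psi$ by $n\mapsto\max_{k\le n}\Psi(k)$ only strengthens the conclusion, since $\tfrac{1}{\Psi'(|X|+r)}$-regularity implies $\tfrac{1}{\Psi(|X|+r)}$-regularity and likewise for the weight bound in~(b); if $\Psi$ is bounded the statement is easy, so one may also assume $\Psi(n)\to\infty$. As a preprocessing step, apply Lemma \ref{lem:heavy_cut} with parameter $\varepsilon$ to get a partition $\mathcal R$ of $V(G)$ into $1/\varepsilon$ parts with $\sum_{W\in\mathcal R}\sum_{\{x,y\}\in\binom W2}\mathcal D(x)\mathcal D(y)\le\varepsilon$; every partition constructed below will refine $\mathcal R$ on its ground set, so Item~4 of Lemma \ref{lem:reg_main} will hold automatically.

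The iteration will maintain, at step $\ell$, a partition $V(G)=X_\ell\cup Z_\ell\cup Y_\ell$, a partition $\mathcal P_\ell$ of $Y_\ell$ refining $\mathcal R|_{Y_\ell}$ such that every vertex of $X_\ell$ is adjacent to all or to none of each part of $\mathcal P_\ell$, together with representative sets inside the parts. At step $\ell$ one applies Lemma \ref{lem:representatives} to $G$, with $\mathcal P_{\ell-1}$ together with the singletons of $X_{\ell-1}$ and the $\mathcal R$-pieces of $Z_{\ell-1}$ as the input partition (simply discarding, from the output, the parts that fall inside $X_{\ell-1}$ or $Z_{\ell-1}$), and with a function $\mathcal E_\ell$ chosen so small that the output pairs $(Q_i,Q_j)$ are $\tfrac{1}{\Psi(|X_{\ell-1}|+r)}$-regular, the density approximation in its Item~5 is at most $\varepsilon$, and its exceptional part has weight at most $\varepsilon/2^\ell$. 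The key point is that $\mathcal E_\ell$ only has to depend on $|X_{\ell-1}|$, which is already determined by steps $1,\dots,\ell-1$; hence the functions $\mathcal E_1,\mathcal E_2,\dots$ can be defined by a \emph{forward} recursion, and there is no circularity in the parameters. Lemma \ref{lem:representatives} then produces $P_1^{(\ell)},\dots,P_{r_\ell}^{(\ell)}$ and representatives $Q_i^{(\ell)}$ satisfying Items~1--5 of Lemma \ref{lem:reg_main} (setting $Z_\ell$ to be $Z_{\ell-1}$ together with the new exceptional part) as well as condition~(a); what may still fail is the thinness in~(b). If it does not fail we stop; otherwise, for each $i$ with $Q_i^{(\ell)}$ not thin we move into $X$ the (boundedly many, and each of weight at least a positive constant determined by step~$\ell$'s parameters) vertices witnessing the failure, refine $\mathcal P_\ell$ so the new $X$-vertices become homogeneous to every part, and pass to step $\ell+1$. (A re-application is genuinely necessary: enlarging $X$ strengthens the regularity demanded of the $Q_i$'s via Lemma \ref{lem:regular_pair} beyond what the current partition guarantees.)

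Once the iteration halts, at some step $N$, the output is $X:=X_N,\ Z:=Z_N,\ Y:=Y_N$ and $\mathcal P_N$ with its now-thin representatives. Item~1 follows from $\sum_\ell\varepsilon/2^\ell<\varepsilon$; Item~2 from the lower bound on the weights of the vertices placed into $X$, together with a suitable choice of $S$; Item~3 because homogeneity to the parts is preserved under all the refinements; Item~4 from refining $\mathcal R$; Item~5 and condition~(a) directly from Lemma \ref{lem:representatives}; and condition~(b) because we halted only once every $Q_i$ was thin (and of weight $\ge 1/S$, since $\mathcal D(Q_i)\ge 1/S_{\ref{lem:representatives}}(\mathcal E_N,\cdot)$).

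The main obstacle — and the reason this is the technically demanding part of the argument — is to show that the iteration halts after a number of steps $N=N(\varepsilon,\Psi)$ bounded in terms of $\varepsilon$ and $\Psi$ alone, so that the resulting $S$ (got by composing the $N$ regularity-lemma bounds) is finite. The tension to be resolved is that the weight threshold below which a vertex of $Y$ causes no trouble at step $\ell+1$ must beat $\tfrac{1}{\Psi(|X_\ell|+r_{\ell+1})}$, hence must be extremely small as $\Psi$ may grow arbitrarily fast; but making this threshold small forces many vertices into $X$, which enlarges $\Psi(|X|+r)$ still further, so one cannot make $Y$ ``light enough'' in a predetermined number of rounds by a naive argument. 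I expect this is overcome by a delicate monovariant: one arranges that each non-halting step forces a fixed increase of some quantity that is bounded from above — the natural candidates being $\mathcal D(Z)$ or the mean-square density (index) of the partition — from which the bound on $N$, and hence on $S$, follows.
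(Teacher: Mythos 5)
Your outline correctly identifies the key tools (Lemma \ref{lem:representatives}, the heavy-cut lemma for Item 4 of Lemma \ref{lem:reg_main}, refining by the neighborhoods of vertices of $X$ to ensure homogeneity for Item 3), but the core mechanism you propose—iteratively applying Lemma \ref{lem:representatives} and moving offending heavy vertices into $X$—is genuinely different from the paper's, and the termination argument you defer to is the missing idea, not a routine detail. The monovariants you name do not obviously work: at step $\ell$ a vertex is moved into $X$ because its weight exceeds $\tfrac{1}{\Psi(|X_\ell|+r_\ell)}\cdot\mathcal{D}(Q_i)\ge\tfrac{1}{\Psi(|X_\ell|+r_\ell)\,S_\ell}$, but $S_\ell$ grows at each step, so the guaranteed increment to $\mathcal{D}(X)$ shrinks without any fixed lower bound, and the index is not being driven up by the operation of moving vertices out of $Y$. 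As written, there is no bound on the number of rounds, hence no way to bound $S$.

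The paper sidesteps this entirely by never iterating the regularity lemma. What is iterated is only a sequence of numerical thresholds $s_1<s_2<\cdots<s_{2/\varepsilon}$, defined by $s_1=1$ and $s_i=S''(s_{i-1})$, where $S''(s)$ is chosen huge compared to both $s$ and the bound $S'(s)=S_{\ref{lem:representatives}}(\mathcal{E}_s,\tfrac{1}{\varepsilon}2^s)$ that Lemma \ref{lem:representatives} would return if $|X|\le s$; concretely $S''(s)\ge\tfrac{2S'(s)}{\varepsilon}\Psi\bigl(s+S'(s)\bigr)$. The weight-bands $X_i=\{v:\ 1/s_i\le\mathcal{D}(v)<1/s_{i-1}\}$ are pairwise disjoint, so by pigeonhole some $X_i$ with $i\le 2/\varepsilon$ has $\mathcal{D}(X_i)\le\varepsilon/2$. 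One discards that band into $Z'$, sets $X=X_1\cup\cdots\cup X_{i-1}$ and $Y'$ to be the lighter vertices, and applies Lemma \ref{lem:representatives} exactly once to $Y'$. The circularity you flag is broken a priori: since every $v\in X$ has $\mathcal{D}(v)\ge 1/s_{i-1}$ we get $|X|\le s_{i-1}=:s$, and the regularity lemma gives $r\le S'(s)$, so $\Psi(|X|+r)\le\Psi(s+S'(s))$ and $\mathcal{D}(Q_j)\ge\varepsilon/(2S'(s))$, while every vertex of $Y'$ has weight $<1/s_i=1/S''(s)\le\tfrac{1}{\Psi(s+S'(s))}\cdot\tfrac{\varepsilon}{2S'(s)}$; condition (b) follows immediately. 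In short, the separation of $X$ from $Y$ is obtained by choosing thresholds and applying the pigeonhole principle \emph{before} touching the regularity lemma, which is the idea your iterative scheme lacks and, I think, cannot supply without it.
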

\begin{proof}
	We may and will assume that the function $\Psi$ is monotone increasing\footnote{To guarantee that $\Psi$ is monotone increasing, we can simply replace $\Psi$ with the function $\Psi'(s) := \max\{\Psi(0),\dots,\Psi(s) \}$.},
	and that the function $S_{\ref{lem:representatives}}(\mathcal{E},m)$, whose existence is guaranteed by Lemma \ref{lem:representatives}, is monotone decreasing in $\mathcal{E}$ and monotone increasing in $m$. Here, being monotone decreasing in $\mathcal{E}$ means that if a pair of functions $\mathcal{E}_1,\mathcal{E}_2 : \mathbb{N} \rightarrow (0,1)$ satisfy $\mathcal{E}_1(r) \leq \mathcal{E}_2(r)$ for every $r \in \mathbb{N}$, then $S_{\ref{lem:representatives}}(\mathcal{E}_1,m) \geq S_{\ref{lem:representatives}}(\mathcal{E}_2,m)$ for every $m$.
	For each $s \in \mathbb{N}$, define the function $\mathcal{E}_s : \mathbb{N} \rightarrow (0,1)$ \nolinebreak by
	$$
	\mathcal{E}_s(r) =
	\min\left\{ \frac{\varepsilon}{2}, \frac{1}{\Psi(s+r)}\right\}.
	$$
	Now define the functions $S',S'' : \mathbb{N} \rightarrow \mathbb{N}$ by setting:
	$$
	S'(s) = S_{\ref{lem:representatives}}\left( \mathcal{E}_s, 
	2^{s} \cdot \lceil 1/\varepsilon \rceil \right), \; \;
	S''(s) = \max\left\{s, \; \frac{2S'(s)}{\varepsilon} \cdot
	\Psi\left(s + S'(s) \right)\right\}.
	$$
	Note that $S''(s) \geq s$ for every $s \in \mathbb{N}$, and that $S'$ and $S''$ are monotone increasing.
	We define a monotone increasing sequence $s_1,s_2,\dots$ as follows: $s_1 = 1$, and for each $i \geq 2$, $s_i = S''(s_{i-1})$.
	We will show that the lemma holds with
	$$
	S = S_{\ref{lem:iterations}}(\Psi,\varepsilon) =
	s_{\lceil 2/\varepsilon \rceil}\;.
	$$
	
	Let $(G,\mathcal{D})$ be a vertex-weighted graph. We iteratively define a sequence of pairwise-disjoint vertex-sets ${X_1,X_2,\dots}\subseteq V(G)$ as follows:
	let $X_1$ be the set of all vertices of $G$ of weight at least $1/s_1$; for each $i \geq 2$, let $X_i$ be the set of all vertices in $V(G) \setminus (X_1 \cup \dots \cup X_{i-1})$ having weight at least $1/s_i$.
	Since $X_1,X_2,\dots$ are pairwise-disjoint, there must be 
	$1 \leq i \leq \lceil 2/\varepsilon \rceil$ for which
	$\mathcal{D}(X_i) \leq \varepsilon/2$. We now set $Z' = X_i$, $X = X_1 \cup \dots \cup X_{i-1}$ and
	$Y' = V(G) \setminus (X \cup Z') = V(G) \setminus (X_1 \cup \dots \cup X_i)$. Note that $\mathcal{D}(Z') \leq \varepsilon/2$. Setting $s := s_{i-1} \leq s_{\lceil 2/\varepsilon \rceil - 1} \leq S$, note that every vertex in $X$ has weight at least $\frac{1}{s}$ (so in particular $|X| \leq s$), while every vertex in $Y'$ has weight less than $\frac{1}{s_i} = \frac{1}{S''(s)}$.
	
	If $\mathcal{D}(Y') < \frac{\varepsilon}{2}$ then $\mathcal{D}(Y' \cup Z') < \varepsilon$, so the assertion of the lemma holds for $Y = \emptyset$ and $Z = Z' \cup Y'$, and we are done. So we may and will assume from now on that $\mathcal{D}(Y') \geq \frac{\varepsilon}{2}$.
	Let $\mathcal{P}'_0$ be a partition of $Y'$ into $\lceil 1/\varepsilon \rceil$ parts such that
	$\sum_{P \in \mathcal{P}'_0}{\sum_{\{x,y\} \in \binom{P}{2}}{\mathcal{D}(x)\mathcal{D}(y)}} \leq \varepsilon$, as guaranteed by Lemma \ref{lem:heavy_cut}. For every $x \in X$, consider the partition $\mathcal{P}_x := \{N_{Y'}(x), Y' \setminus N_{Y'}(x)\}$ of $Y'$. Let $\mathcal{P}_0$ be the common refinement of the partitions $\mathcal{P}'_0$ and $(\mathcal{P}_x)_{x \in X}$. Then for every $x \in X$ and $P \in \mathcal{P}_0$, either $x$ is adjacent to every vertex of $P$, or $x$ is not adjacent to any vertex of $P$. Moreover, we have
	$|\mathcal{P}_0| \leq 2^{|X|} \cdot \lceil 1/\varepsilon \rceil \leq
	2^{s} \cdot \lceil 1/\varepsilon \rceil$.
	
	Now apply Lemma \ref{lem:representatives} to $(G[Y'],\mathcal{D}_{Y'})$ with parameters $\mathcal{E}_s$ and $m = 2^{s} \cdot \lceil 1/\varepsilon \rceil$, and with the partition $\mathcal{P}_0$ (noting that $|\mathcal{P}_0| \leq m$), to obtain a partition $\mathcal{P} = \{P_0,P_1,\dots,P_r\}$ of $Y'$ and vertex-sets $Q_i \subseteq P_i$ (for $1 \leq i \leq r$), with the properties stated in that lemma.
	Note that in particular we have
	\begin{equation}\label{eq:r_bound}
	r \leq
	S_{\ref{lem:representatives}}
	\left( \mathcal{E}_s, 2^{s} \cdot \lceil 1/\varepsilon \rceil \right) = S'(s).
	\end{equation}
	Set $Z = Z' \cup P_0$ and $Y = Y' \setminus P_0$, noting that
	$\mathcal{D}(P_0) < \mathcal{E}_s(0) \leq \frac{\varepsilon}{2}$, and hence $\mathcal{D}(Z) = \mathcal{D}(Z') + \mathcal{D}(P_0) < \varepsilon$, as required by Item 1 in Lemma \ref{lem:reg_main}. Items 3 and 4 in Lemma \ref{lem:reg_main} hold because each of the sets $P_1,\dots,P_r$ is contained in some part of $\mathcal{P}_0$, and hence also in some part of $\mathcal{P}'_0$. Item 2 of Lemma \ref{lem:reg_main} was already verified above, and Item 5 of Lemma \ref{lem:reg_main} is guaranteed by Lemma \ref{lem:representatives}. Item (a) holds because Lemma \ref{lem:representatives} guarantees that all pairs $(Q_i,Q_j)$ are $\mathcal{E}_s(r)$-regular, and because $\mathcal{E}_s(r) \leq \frac{1}{\Psi(s+r)} \leq \frac{1}{\Psi(|X|+r)}$ (here we used our choice of $\mathcal{E}_s$, the fact that $|X| \leq s$, and the monotonicity of $\Psi$).
	It remains to prove Item (b). For each $1 \leq i \leq r$, \nolinebreak we \nolinebreak have
	\begin{equation}\label{eq:Q_i bound}
	\begin{split} 
	\mathcal{D}(Q_i) &=
	\mathcal{D}_{Y'}(Q_i) \cdot \mathcal{D}(Y') \geq
	\mathcal{D}_{Y'}(Q_i) \cdot \frac{\varepsilon}{2} \geq
	\frac{\varepsilon}{2S_{\ref{lem:representatives}}
		\left( \mathcal{E}_s, 2^{s} \cdot \lceil 1/\varepsilon \rceil \right)}
	\\&=
	\frac{\varepsilon}{2S'(s)} 	
	\geq \frac{1}{S''(s)}
	\geq
	\frac{1}{S''(s_{\lceil 2/\varepsilon \rceil - 1})} = 
	\frac{1}{s_{\lceil 2/\varepsilon \rceil}} = \frac{1}{S} \; ,
	\end{split}
	\end{equation}
	where in the second inequality we used the guarantees of Lemma \ref{lem:representatives}, and later we used our choice of $S'$ and $S''$, the monotonicity of $S''$, and the fact that $s \leq s_{\lceil 2/\varepsilon \rceil - 1}$.
	Next, fix $1 \leq i \leq r$ and recall that all vertices in
	$Q_i \subseteq Y \subseteq Y'$ have weight less than
	\begin{align*}
	\frac{1}{S''(s)} &\leq
	\frac{1}{\Psi\left( s + S'(s) \right)} \cdot
	\frac{\varepsilon}{2S'(s)}
	\\ &\leq
	\frac{1}{\Psi(s + r)} \cdot \mathcal{D}(Q_i) \leq
	\frac{1}{\Psi(|X| + r)} \cdot \mathcal{D}(Q_i),
	\end{align*}
	where in the first inequality we used our choice of $S''$, in the last two inequalities we used the monotonicity of $\Psi$, and in the second inequality we also used \eqref{eq:r_bound} and an intermediate step in \eqref{eq:Q_i bound}.
	This shows that $\mathcal{D}(u) <
	\frac{1}{\Psi(|X| + r)} \cdot \mathcal{D}(Q_i)$ for every $1 \leq i \leq r$ and $u \in Q_i$, \nolinebreak as \nolinebreak required.
\end{proof}

\begin{proof}[Proof of Lemma \ref{lem:reg_main}]
	Define the functions
	$$\zeta : \mathbb{N} \rightarrow (0,1), \;  \zeta(m) = \zeta_{\ref{lem:Turan_Ramsey}}\left( \Psi(m),\frac{1}{\Psi(m)} \right),
	$$
	and
	$$
	\Psi' : \mathbb{N} \rightarrow \mathbb{N}, \;
	\Psi'(m) = \frac{2\Psi(m)}{\zeta(m)}.
	$$
We may and will assume that the function $\zeta_{\ref{lem:Turan_Ramsey}}(t,\delta)$ is monotone decreasing in $t$ and monotone increasing in $\delta$. This assumption implies that the function $\zeta$ defined above is monotone decreasing. We prove the lemma with
	$$
	S = S_{\ref{lem:reg_main}}(\Psi,\varepsilon) :=
	\frac{S_{\ref{lem:iterations}}(\Psi',\varepsilon)}
	{\zeta(S_{\ref{lem:iterations}}(\Psi',\varepsilon))} \geq S_{\ref{lem:iterations}}(\Psi',\varepsilon)\;.
	$$
	
	Let $(G,\mathcal{D})$ be a vertex-weighted graph.
	Apply Lemma \ref{lem:iterations} to $(G,\mathcal{D})$ with parameters $\Psi'$ and $\varepsilon$, to obtain a partition $V(G) = X \cup Y \cup Z$, a partition $\mathcal{P} = \{P_1,\dots,P_r\}$ of $Y$, and subsets $Q_i \subseteq P_i$ (for $1 \leq i \leq r$) such that Items 1-5 of Lemma \ref{lem:reg_main} hold (with respect to $S_{\ref{lem:iterations}}(\Psi',\varepsilon)$), and so do Items (a) and (b) of Lemma \ref{lem:iterations}.
	
	Let us now prove that Items 6-8 (in Lemma \ref{lem:reg_main}) hold. It will be convenient to put
	$m := |X| + r$. By Item (b) in Lemma \ref{lem:iterations} and by our choice of $\Psi'$, we have
	\begin{equation}\label{eq:vertex_weight_bound}
	\mathcal{D}(u) <
	\frac{1}{\Psi'(m)} \cdot \mathcal{D}(Q_i) <
	\frac{\zeta(m)}{\Psi(m)} \cdot \mathcal{D}(Q_i) \leq
	\zeta(m) \cdot \mathcal{D}(Q_i)
	\end{equation}
	for every $1 \leq i \leq r$ and $u \in Q_i$. Recalling our choice of $\zeta$, we see that Lemma \ref{lem:Turan_Ramsey} is applicable to $(G[Q_i],\mathcal{D}_{Q_i})$ with parameters $t = \Psi(m) = \Psi(|X| + r)$ and $\delta = \frac{1}{\Psi(m)} = \frac{1}{\Psi(|X| + r)}$. Applying Lemma \ref{lem:Turan_Ramsey} with this input, we obtain pairwise-disjoint vertex-sets $Q_{i,1},\dots,Q_{i,t} \subseteq Q_i$ satisfying the properties stated in that lemma. The guarantees of Lemma \ref{lem:Turan_Ramsey} immediately establish Item 6, and also imply that for every
	$1 \leq k \leq t$ we have
	$$
	\mathcal{D}(Q_{i,k}) \geq \zeta(m) \cdot \mathcal{D}(Q_i) =
	\zeta(|X| + r) \cdot \mathcal{D}(Q_i) \geq
	\zeta(|X| + r) \cdot \frac{1}{S_{\ref{lem:iterations}}(\Psi',\varepsilon)} \geq
	\frac{\zeta(S_{\ref{lem:iterations}}(\Psi',\varepsilon))}{S_{\ref{lem:iterations}}(\Psi',\varepsilon)} = \frac{1}{S},
	$$
	where in the second and third inequalities we used the fact that
	$|X|+r,\frac{1}{\mathcal{D}(Q_i)} \leq S_{\ref{lem:iterations}}(\Psi',\varepsilon)$, as guaranteed by Item 2 of Lemma \ref{lem:reg_main} and Item (b) of Lemma \ref{lem:iterations}; in the third inequality we also used the monotonicity of $\zeta$. This establishes Item 8.	
	It remains to prove Item 7.
	By Item (a) of Lemma \ref{lem:iterations}, the pair $(Q_i,Q_j)$ is $\frac{1}{\Psi'(m)}$-regular for every $1 \leq i < j \leq r$. Fix any $1 \leq k,\ell \leq t$. Recalling that
	$\frac{1}{\Psi'(m)} = \frac{\zeta(m)}{2\Psi(m)}$ and that $\mathcal{D}(Q_{i,k}) \geq \zeta(m) \cdot \mathcal{D}(Q_i), \, \mathcal{D}(Q_{j,\ell}) \geq \zeta(m) \cdot \mathcal{D}(Q_j)$, we apply Item 1 of Lemma \ref{lem:regular_pair} to $Q_i,Q_j,Q_{i,k},Q_{j,\ell}$ with parameter $\alpha = \zeta(m)$, to conclude that
	$|d(Q_{i,k},Q_{j,\ell}) - d(Q_i,Q_j)| \leq \frac{1}{\Psi'(m)} \leq \frac{1}{\Psi(m)} = \frac{1}{\Psi(|X|+r)}$, and that the pair $(Q_{i,k},Q_{j,\ell})$ is $\frac{1}{\Psi(|X|+r)}$-regular, as required.
\end{proof}

\subsection{Proof of the Main Result}\label{subsec:main_result}
In this subsection we prove (the ``if" direction of) Theorem \ref{thm:main}.
For a hereditary and extendable graph property $\mathcal{P}$, our tester for $\mathcal{P}$ will work as follows: given an input $(G,\mathcal{D})$ and a proximity parameter $\varepsilon$, the tester samples a sequence of vertices $u_1,\dots,u_s \in V(G)$ independently and with distribution $\mathcal{D}$, where $s = s_{\mathcal{P}}(\varepsilon)$ is as in Theorem \ref{statement:ours}; the tester then accepts if and only if $G[\{u_1,\dots,u_s\}]$ satisfies $\mathcal{P}$. Since $\mathcal{P}$ is hereditary, this tester accepts with probability $1$ if the input graph satisfies $\mathcal{P}$. In the other direction, Theorem \ref{statement:ours} immediately implies that if the input $(G,\mathcal{D})$ is $\varepsilon$-far from $\mathcal{P}$ then the tester rejects with probability at least $\frac{2}{3}$. So we see that the ``if" direction of Theorem \ref{thm:main} follows from Theorem \ref{statement:ours}.

From now on our goal is to prove Theorem \ref{statement:ours}.
We start by introducing variants of some definitions from \cite{AS_hereditary}. An {\em embedding scheme} is a complete graph $K$ with a vertex partition $A_K \cup B_K$, such that every vertex in $B_K$ is colored black or white, every edge with an endpoint in $A_K$ is colored black or white, and every edge contained in $B$ is colored black, white or grey. Note that one of $A_k,B_k$ may be empty; that the vertices of $A_K$ are not colored; and that the edges with at least one endpoint in $A_K$ cannot be colored grey.
An {\em embedding} from a graph $F$ to an embedding scheme $K$ is a map
$\varphi : V(F) \rightarrow V(K)$ such that the following \nolinebreak holds:
\begin{enumerate}
	\item For every $a \in A_K$ we have $|\varphi^{-1}(a)| \leq 1$.
	\item For every $b \in B_K$, if $b$ is colored black then $\varphi^{-1}(b)$ induces a complete graph, and if $b$ is colored white then $\varphi^{-1}(b)$ induces an empty graph.
	\item For every $\{x,y\} \in \binom{V(K)}{2}$, if $\{x,y\}$ is colored black then the bipartite graph between $\varphi^{-1}(x)$ and $\varphi^{-1}(y)$ is complete, and if $\{x,y\}$ is colored white then the bipartite graph between $\varphi^{-1}(x)$ and $\varphi^{-1}(y)$ is empty (note that there are no restrictions in the case that $\{x,y\}$ is colored grey).
\end{enumerate}
Note that Condition 3 implies that for every $a \in A_K$ and $x \in V(K) \setminus \{a\}$, the bipartite graph between $\varphi^{-1}(a)$ and $\varphi^{-1}(x)$ is either complete or empty.
We use the notation $F \rightarrow K$ to mean that there is an embedding from $F$ to $K$.
For a graph-family $\mathcal{F}$ and an integer $m$, let $\mathcal{F}_m$ be the family of all embedding schemes $K$ on at most $m$ vertices, such that there is an embedding from some $F \in \mathcal{F}$ to $K$.
We now introduce a variant of the function $\Psi_{\mathcal{F}}$ defined in \cite{AS_hereditary}.
\begin{definition}\label{def:Psi}
	For a graph-family $\mathcal{F}$ and an integer $m$ for which $\mathcal{F}_m \neq \emptyset$, define
	$$
	\Psi_{\mathcal{F}}(m) =
	\max_{K \in \mathcal{F}_m}{
		\min_{F \in \mathcal{F} : F \rightarrow K}{|V(F)|}
	}.
	$$
	If $\mathcal{F}_m = \emptyset$ then define $\Psi_{\mathcal{F}}(m) = 0$.
\end{definition}
\noindent
We are now ready to prove Theorem \ref{statement:ours} (and thus also the ``if" direction of Theorem \ref{thm:main}).

\begin{proof}[Proof of Theorem \ref{statement:ours}]
	Let $\mathcal{P}$ be a hereditary and extendable graph property. 
	Let $\mathcal{F} = \mathcal{F}(\mathcal{P})$ be the family of graphs which do not satisfy $\mathcal{P}$. Fix $\varepsilon \in (0,1)$, and let
	$\Psi : \mathbb{N} \rightarrow \mathbb{N}$ be the function
	$$
	\Psi(m) =
	\max\left\{
	\frac{8}{\varepsilon},
	\Psi_{\mathcal{F}}(m), \frac{1}{\delta_{\ref{lem:counting}}(\Psi_{\mathcal{F}}(m),\frac{\varepsilon}{8})}
	\right\},
	$$
	where $\Psi_{\mathcal{F}}$ is defined in Definition \ref{def:Psi}.
	We may and will assume that the function $\delta_{\ref{lem:counting}}(h,\eta)$ is monotone decreasing in $h$ and monotone increasing in $\eta$. Set
	$S := S_{\ref{lem:reg_main}}(\Psi, \frac{\varepsilon}{4})$. We prove the theorem with
	\begin{equation}\label{eq:choice_of_s}
	s = s_{\mathcal{P}}(\varepsilon) := \frac{2S^{S+1}}{\delta_{\ref{lem:counting}}\left( S,\frac{\varepsilon}{8} \right)} \; .
	\end{equation}
	
	Let $(G,\mathcal{D})$ be a vertex-weighted graph which is $\varepsilon$-far from $\mathcal{P}$. Apply Lemma \ref{lem:reg_main} to $(G,\mathcal{D})$ with parameter $\frac{\varepsilon}{4}$ and with $\Psi$ as above, to obtain a partition
	$V(G) = X \cup Y \cup Z$, a partition $\{P_1,\dots,P_r\}$ of $Y$, subsets $Q_i \subseteq P_i$ (for $1 \leq i \leq r$), and pairwise-disjoint subsets $Q_{i,1},\dots,Q_{i,t} \subseteq Q_i$, such that $t = \Psi(|X| + r)$ and Items 1-8 in Lemma \ref{lem:reg_main} hold.
	
	We claim that $G$ is $\frac{3\varepsilon}{4}$-far from any graph $G'$ on $V(G)$ which satisfies $G'[X \cup Y] \in \mathcal{P}$.
	So suppose by contradiction that there is a graph $G'$ on $V(G)$ such that $G'[X \cup Y]$ satisfies $\mathcal{P}$ and such that $G'$ is $\frac{3\varepsilon}{4}$-close to $G$. Since $\mathcal{P}$ is extendable,
	there is a graph $G''$ on $V(G) = V(G')$ such that
	$G''[X \cup Y] = G'[X \cup Y]$ and such that $G''$ satisfies $\mathcal{P}$. In order to turn $G'$ into $G''$, we only need to add/delete edges which are incident to vertices of $Z$. Therefore, the total weight of edge-changes needed to turn $G'$ into $G''$ is at most $\mathcal{D}(Z) < \frac{\varepsilon}{4}$, as guaranteed by Item 1 of Lemma \ref{lem:reg_main}. So we see that $G$ can be turned into $G''$, which satisfies $\mathcal{P}$, by adding/deleting edges whose total weight is less than $\frac{3\varepsilon}{4} + \frac{\varepsilon}{4} = \varepsilon$, in contradiction the assumption that $(G,\mathcal{D})$ is $\varepsilon$-far from \nolinebreak $\mathcal{P}$.
	
	We thus proved that $G$ is $\frac{3\varepsilon}{4}$-far from any graph $G'$ satisfying $G'[X \cup Y] \in \mathcal{P}$.
	Now, let $G'$ be the graph obtained from $G$ by doing the following changes:
	\begin{enumerate}
		\item For every $1 \leq i \leq r$, if $d(Q_{i,k},Q_{i,\ell}) \geq \frac{1}{2}$ for every $1 \leq k < \ell \leq t$ then turn $P_i$ into a clique, and if $d(Q_{i,k},Q_{i,\ell}) < \frac{1}{2}$ for every $1 \leq k < \ell \leq t$, then turn $P_i$ into an independent set. By Item 6 in Lemma \ref{lem:reg_main}, one of these options has to hold. The total weight of edge-changes
		needed in this item is at most $\frac{\varepsilon}{4}$ by Item 4 of Lemma  \ref{lem:reg_main}.
		\item For every $1 \leq i < j \leq r$, if
		$d(Q_i,Q_j) > 1 - \frac{\varepsilon}{4}$ then add all edges between $P_i$ and $P_j$, and if $d(Q_i,Q_j) < \frac{\varepsilon}{4}$ then remove all edges between $P_i$ and $P_j$ (note that if
		$\frac{\varepsilon}{4} \leq d(Q_i,Q_j) \leq 1 - \frac{\varepsilon}{4}$ then no changes are made in the bipartite graph between $P_i$ and $P_j$). The total weight of edge-changes needed in this item is less than $\frac{\varepsilon}{2}$ by Item 5 of Lemma \ref{lem:reg_main}. Indeed, observe that the total weight of changes between $P_i,P_j$ is less than $\mathcal{D}(P_i)\mathcal{D}(P_j) \cdot \left( |d(Q_i,Q_j) - d(P_i,P_j)| + \frac{\varepsilon}{4} \right)$ by the triangle inequality. Hence, the total weight of changes is less than
		\begin{align*}
		&\sum_{1 \leq i < j \leq r}
		{\mathcal{D}(P_i)\mathcal{D}(P_j) \cdot \left( |d(Q_i,Q_j) - d(P_i,P_j)| + \frac{\varepsilon}{4} \right)} \leq \\& 
		\frac{\varepsilon}{4} + \sum_{1 \leq i < j \leq r}
		{\mathcal{D}(P_i)\mathcal{D}(P_j) \cdot |d(Q_i,Q_j) - d(P_i,P_j)|} \leq \frac{\varepsilon}{2}.
		\end{align*}
	\end{enumerate}
	Note that no edge with an endpoint in $X$ was added/deleted in Items 1-2, so $G'$ and $G$ agree on all edges that are incident to vertices of $X$.
	
	We see that the total weight of edge-changes made in Items 1-2 is less than $\frac{3\varepsilon}{4}$. So $G'[X \cup Y]$ cannot satisfy $\mathcal{P}$, implying that $G'[X \cup Y] \in \mathcal{F}$. 
	Note that by definition (see Items 1-2 above), the graph $G'$ has the following properties: 
	\begin{enumerate}
		\item[(a)] For every $1 \leq i \leq r$, $P_i$ is either a clique or an independent set in $G'$. Moreover, $P_i$ is a clique in $G'$ then
		$d_G(Q_{i,k},Q_{i,\ell}) \geq \frac{1}{2}$ for every $1 \leq k < \ell \leq t$, and if $P_i$ is an independent set in $G'$ then
		$d_G(Q_{i,k},Q_{i,\ell}) < \nolinebreak \frac{1}{2}$ for every $1 \leq k < \ell \leq t$. 
		\item[(b)] For every pair $1 \leq i < j \leq r$, if there is an edge in $G'$ between $P_i$ and $P_j$ then $d_G(Q_i,Q_j) \geq \frac{\varepsilon}{4}$. Then by Item 7 of Lemma \ref{lem:reg_main} we have that
		$d_G(Q_{i,k},Q_{j,\ell}) \geq \frac{\varepsilon}{4} - \frac{1}{\Psi(|X| + r)} \geq \frac{\varepsilon}{8}$ for every $1 \leq k,\ell \leq t$.
		Analogously, if there is a non-edge in $G'$ between $P_i$ and $P_j$ then $d_G(Q_i,Q_j) \leq 1 - \frac{\varepsilon}{4}$, which implies (by Item 7 of Lemma \ref{lem:reg_main}) that
		$d_G(Q_{i,k},Q_{j,\ell}) \leq 1 - \frac{\varepsilon}{4} + \frac{1}{\Psi(|X| + r)} \leq 1 - \frac{\varepsilon}{8}$ for every $1 \leq k,\ell \leq t$.
	\end{enumerate}
	
	Now let $K$ be the following embedding scheme: $A_K = X$ and $B_K = \{b_1,\dots,b_r\}$;
	for each $1 \leq i \leq r$, vertex $b_i$ is colored black if $P_i$ is a clique in $G'$ and white if $P_i$ is an independent set in $G'$; for each $x,x' \in X$, edge $\{x,x'\}$ is colored black if $\{x,x'\} \in E(G)$ and white if $\{x,x'\} \notin E(G)$;
	for each $x \in X$, $1 \leq i \leq r$, edge $\{x,b_i\}$ is colored black if the bipartite graph between $x$ and $P_i$ is complete and white if this bipartite graph is empty (Item 3 in Lemma \ref{lem:reg_main} implies that one of these options must hold); finally, for every $1 \leq i < j \leq r$, edge $\{b_i,b_j\}$ is colored black if the bipartite graph between $P_i$ and $P_j$ is complete in $G'$, white if the bipartite graph between $P_i$ and $P_j$ is empty in $G'$, and grey otherwise.

	Observe that the map $\varphi : X \cup Y \rightarrow V(K)$ which maps $x$ to itself (for every $x \in X = A_K$) and $P_i$ to $b_i$ (for every $1 \leq i \leq r$), is an embedding from $G'[X \cup Y]$ to $K$. Since $|V(K)| = |X| + r$, we have
	$K \in \mathcal{F}_m$ for $m := |X| + r$. By the definition of the function $\Psi_{\mathcal{F}}$ (see Definition \ref{def:Psi}), there is
	$F \in \mathcal{F}$ such that $F \rightarrow K$ and
	$|V(F)| \leq \Psi_{\mathcal{F}}(m) = \Psi_{\mathcal{F}}(|X| + r) \leq \Psi(|X| + r) = t$.
	
	Now, fixing an embedding $\rho$ from $F$ to $K$, write $W_i := \rho^{-1}(b_i) = \{w_{i,1},\dots,w_{i,f_i}\}$ for $1 \leq i \leq r$.
	Put $W = W_1 \cup \dots \cup W_r$ and $H = F[W]$.
	We claim that the sets $(Q_{i,k})_{1 \leq i \leq r, 1 \leq k \leq f_i}$ satisfy the requirements 1-2 in Lemma \ref{lem:counting} with respect to $h = |V(F)| \leq \Psi_{\mathcal{F}}(m)$, $\eta = \frac{\varepsilon}{8}$ and $H$ as above, {\em in the graph $G$}. In other words, we show that one can apply Lemma \ref{lem:counting} with the sets $U_1,\dots,U_h$ being
	$(Q_{i,k})_{1 \leq i \leq r, 1 \leq k \leq f_i}$, and with $G$ as the host graph. We actually already proved that Item 1 in Lemma \ref{lem:counting} holds; indeed, this follows from the fact that $F \rightarrow K$, the definition of the embedding scheme $K$, and Items (a)-(b) above. Item 2 of Lemma \ref{lem:counting} follows from Items 6-7 of Lemma \ref{lem:reg_main}, which together imply that for every $1 \leq i \leq j \leq r$ and $1 \leq k \leq f_i, 1 \leq \ell \leq f_j$ (with the exception of $(i,k)$ = $(j,\ell)$), the pair $(Q_{i,k},Q_{j,\ell})$ is $\delta$-regular with
	$\delta = \frac{1}{\Psi(m)} \leq
	\delta_{\ref{lem:counting}}(\Psi_{\mathcal{F}}(m),\frac{\varepsilon}{8}) \leq \delta_{\ref{lem:counting}}(h,\frac{\varepsilon}{8})$, as required.
	
	We thus showed that Lemma \ref{lem:counting} is applicable to the tuple of sets  $(Q_{i,k})_{1 \leq i \leq r, 1 \leq k \leq f_i}$ and the graph $H = F[W]$ (with the parameters defined above). Let $\mathcal{U}$ be the set of all tuples
	$(u_{i,k})_{1 \leq i \leq r, 1 \leq k \leq f_i}$, where $u_{i,k} \in Q_{i,k}$, which induce (in $G$) a copy of $H = F[W]$ in which $u_{i,k}$ plays the role of $w_{i,k}$ for every $1 \leq i \leq r$ and $1 \leq k \leq f_i$. By Lemma \ref{lem:counting}, we have
	\begin{equation}\label{eq:F'_copy_prob_bound}
	\sum_{(u_{i,k})_{i,k} \in \mathcal{U}}
	\; {\prod_{i=1}^{r}\prod_{k=1}^{f_i}{\mathcal{D}(u_{i,k})}} \geq
	\delta_{\ref{lem:counting}}\left( h,\frac{\varepsilon}{8} \right) \cdot  \prod_{i=1}^{r}\prod_{k=1}^{f_i}{\mathcal{D}(U_{i,k})} \geq
	\delta_{\ref{lem:counting}}\left( \Psi_{\mathcal{F}}(m),\frac{\varepsilon}{8} \right) \cdot S^{-|W|},
	\end{equation}
	where in the last inequality we used the guarantees of Item 8 in Lemma \ref{lem:reg_main} and the monotonicity of the function $\delta_{\ref{lem:counting}}$. Observe that for every $(u_{i,k})_{i,k} \in \mathcal{U}$, the subgraph of $G$ induced by the vertex-set \linebreak 
	$X \cup \{u_{i,k} : 1 \leq i \leq r, 1 \leq k \leq f_i\}$ contains an induced copy of $F$. Indeed, this follows from the definition of $\mathcal{U}$, the fact that $F \rightarrow K$, and the definition of the embedding scheme $K$.
	Now sample an $(|X| + |W|)$-tuple of vertices from $G$ according to the distribution $\mathcal{D}$ and independently.
	Note that if every vertex in $X$ appears in the first $|X|$ vertices of the sample, and if the tuple of the last $|W|$ vertices of the sample belongs to $\mathcal{U}$, then the subgraph induced by the sample contains an induced copy of $F$ and hence does not satisfy $\mathcal{P}$ (as $F \in \mathcal{F}$). 
	The probability for this event is at least
	\begin{align*}
	\delta_{\ref{lem:counting}}\left( \Psi_{\mathcal{F}}(m),\frac{\varepsilon}{8} \right) \cdot S^{-|X|-|W|} \; .
	\end{align*}
	Here we used \eqref{eq:F'_copy_prob_bound} and Item 2 in Lemma \ref{lem:reg_main}. Next, note that $|X| + |W| \leq |X| + rt \leq S$, where in the last inequality we used Items 2 and 8 of Lemma \ref{lem:reg_main}. Similarly, $\Psi_{\mathcal{F}}(m) \leq t \leq S$. So we see that a sample of $S$ random vertices induces a graph which does not satisfy $\mathcal{P}$ with probability at least
	$\delta_{\ref{lem:counting}}\left( S,\frac{\varepsilon}{8} \right) \cdot S^{-S}$.
	Therefore, a sample of $s = s_{\mathcal{P}}(\varepsilon)$ vertices (see \eqref{eq:choice_of_s})
	induces a graph not satisfying $\mathcal{P}$ with probability at least
	$$
	1 - \left( 1 - \delta_{\ref{lem:counting}}\left( S,\frac{\varepsilon}{8} \right) \cdot S^{-S} \right)^{s/S} = 
	1 - \left( 1 - \delta_{\ref{lem:counting}}\left( S,\frac{\varepsilon}{8} \right) \cdot S^{-S} \right)^{\frac{2S^{S}}
	{\delta_{\ref{lem:counting}}\left( S,\frac{\varepsilon}{8} \right)}}
	\geq
	1 - e^{-2} \geq \frac{2}{3} ,
	$$
	as required. This completes the proof.
\end{proof}
It is natural to ask about the dependence on $\varepsilon$ of the sample complexity of the tester supplied by Theorem \nolinebreak \ref{thm:main}. One answer is that one cannot prove any upper bound on the sample complexity which holds uniformly for all properties $\mathcal{P}$, because it was shown in \cite{AS_mono} that no such bound exists even in the standard model. Suppose then that
one is interested only in ``simple'' properties such as induced $H$-freeness (for some fixed $H$). In this case, it is not too hard
to see that although we are iterating Lemma \ref{lem:representatives}, which has wowzer-type (that is, iterated-tower) bounds\footnote{To be precise, we mean here that the ``standard" way of establishing Lemma \ref{lem:representatives} (which is also the way we prove this lemma in this paper) is via the strong regularity lemma (see Lemma \ref{lem:strong_reg}), which is known to only give wowzer-type bounds \cite{CF,KS}. In \cite{CF}, (an unweighted variant of) Lemma \ref{lem:representatives} was proved without the use of the strong regularity lemma, thus giving better, tower-type, bounds. This is alluded to in the following sentence.} in this setting
even for unweighted graphs (see \cite{CF,KS}), we are still getting ``only'' a wowzer-type bound.
We should also point out that it might be possible to use the
ideas in \cite{CF}, together with those presented here, in order to get tower-type bounds on the sample complexity of testing induced $H$-freeness in the VDF model.
\section{VDF-Testable Properties are Extendable and Hereditary}\label{sec:negative}
In this section we prove the ``only if'' direction of Theorem \ref{thm:main}. The proof is divided between Propositions \ref{prop:hard_extendable} and \ref{prop:hard_hereditary}. As shown in \cite{Goldreich_VDF}, we can (and will) always assume that a VDF tester only queries the input graph on pairs of vertices which it has sampled.
\begin{proposition}\label{prop:hard_extendable}
	If a graph property $\mathcal{P}$ is not extendable, then $\mathcal{P}$ is not testable in the VDF model.
\end{proposition}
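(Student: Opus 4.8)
The plan is to describe, for one carefully chosen value of $\varepsilon$, a pair of inputs that any tester making a bounded number of queries is essentially forced to treat identically: one input satisfies $\mathcal{P}$, while the other is $\varepsilon$-far from $\mathcal{P}$. Two features of the VDF model make this possible: the tester is not given $|V(G)|$, and the distribution $\mathcal{D}$ may put almost all of its mass on a fixed $O(1)$-size set of vertices. Concretely, since $\mathcal{P}$ is not extendable, fix a graph $G_0 \in \mathcal{P}$ with $k := |V(G_0)|$ such that no graph on $k+1$ vertices containing $G_0$ as an induced subgraph belongs to $\mathcal{P}$. If $k=1$ this already forces $\mathcal{P}$ to contain no graph on two vertices at all (every two-vertex graph contains the one-vertex $G_0$ as an induced subgraph), so assume $k \ge 2$. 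Put $\varepsilon := 1/(4k^2)$, assume for contradiction that $\mathcal{P}$ has a tester using $q := q(\varepsilon)$ queries (the case $q=0$ is immediate, since then the acceptance probability is a fixed constant that cannot be both $\ge 2/3$ and $\le 1/3$), and set $\delta := 1/(6q) \le 1/2$. Now define $(G_1,\mathcal{D}_1)$ by taking $V(G_1) = V(G_0)\cup\{v\}$ for a new vertex $v$, letting $G_1[V(G_0)] = G_0$ with $v$ joined to $V(G_0)$ in an arbitrary way, and setting $\mathcal{D}_1(u) = (1-\delta)/k$ for $u \in V(G_0)$ and $\mathcal{D}_1(v) = \delta$. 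Also define $(G_0,\mathcal{D}_0)$ with $\mathcal{D}_0$ uniform on $V(G_0)$; note that $\mathcal{D}_0$ is exactly $\mathcal{D}_1$ conditioned on $V(G_0)$, and that the tester accepts $(G_0,\mathcal D_0)$ with probability at least $2/3$ since $G_0 \in \mathcal P$.

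First I would check that $(G_1,\mathcal{D}_1)$ is $\varepsilon$-far from $\mathcal{P}$. Let $G'$ be any graph on $V(G_1)$ satisfying $\mathcal{P}$ (if none exists the claim is vacuous). Since $|V(G')| = k+1$, the non-extendability of $G_0$ implies $G'[V(G_0)] \ne G_0$; hence $E(G') \triangle E(G_1)$ contains some pair $\{x,y\} \subseteq V(G_0)$ (recall $G_1[V(G_0)] = G_0$), and this pair has weight $\mathcal{D}_1(x)\mathcal{D}_1(y) = ((1-\delta)/k)^2 \ge 1/(4k^2) = \varepsilon$. Thus the distance from $G_1$ to $G'$ is at least $\varepsilon$, as required.

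Then I would argue that the tester cannot tell the two inputs apart. Run the tester on $(G_0,\mathcal{D}_0)$ and on $(G_1,\mathcal{D}_1)$ with the same internal randomness, coupling the samples so that whenever a draw from $\mathcal{D}_1$ lands in $V(G_0)$ it agrees with the corresponding draw from $\mathcal{D}_0$ — this is legitimate precisely because $\mathcal{D}_0$ equals $\mathcal{D}_1$ conditioned on $V(G_0)$. On the event that none of the (at most $q$) samples in the $(G_1,\mathcal{D}_1)$ run equals $v$, which has probability at least $(1-\delta)^q \ge 1 - q\delta$, the two runs see the same samples, and since $G_1[V(G_0)] = G_0$ and (as we may assume) a VDF tester only queries pairs it has sampled, they also receive identical answers to all edge queries and hence return the same verdict. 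Therefore the tester accepts $(G_1,\mathcal{D}_1)$ with probability at least $\Pr[\text{accept }(G_0,\mathcal{D}_0)] - q\delta \ge 2/3 - 1/6 > 1/3$, contradicting the requirement that it reject the $\varepsilon$-far input $(G_1,\mathcal{D}_1)$ with probability at least $2/3$. Hence no such tester exists, i.e.\ $\mathcal{P}$ is not testable in the VDF model.

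I expect the only delicate point to be this last coupling step — specifically, making sure that conditioning on the event ``$v$ was never sampled'' leaves the distribution of the tester's entire view unchanged, which is exactly why $\mathcal{D}_0$ must be taken to be the conditional distribution, why it matters that $G_1$ and $G_0$ induce the same graph on $V(G_0)$, and why one needs the reduction (from \cite{Goldreich_VDF}) that queries may be assumed to be confined to sampled vertices. Everything else is bookkeeping, and the whole scheme collapses if the tester is handed $|V(G)|$ — consistent with Theorem \ref{theo_rest4}.
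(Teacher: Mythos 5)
Your proof is correct and follows essentially the same strategy as the paper's: take a non-extendable $G_0 \in \mathcal{P}$, adjoin one extra vertex $v$ with negligible weight while keeping the mass on $V(G_0)$ near-uniform, observe that the resulting input is $\Theta(1/k^2)$-far from $\mathcal{P}$ because fixing it requires editing a pair inside $V(G_0)$, and then show the tester cannot distinguish this from $(G_0,\text{uniform})$. The only cosmetic difference is that the paper gives $v$ weight exactly $0$ (so the two vertex-sample distributions are literally identical and no coupling estimate is needed), and relegates to a footnote the observation that a tiny positive weight $\delta$ works too — which is exactly what you implement, with the explicit coupling bound $\Pr[\text{accept}] \ge 2/3 - q\delta$ filling in the "indistinguishable" claim.
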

\begin{proof}
	Since $\mathcal{P}$ is not extendable, there is a graph $G_1 \in \mathcal{P}$, such that no $(|V(G_1)|+1)$-vertex graph satisfying $\mathcal{P}$ contains $G_1$ as an induced subgraph. Let
	$G_2$ be a graph obtained from $G_1$ by adding a ``new'' vertex $v$
	(and putting an arbitrary bipartite graph between $v$ and $V(G_1)$),
	let $\mathcal{D}_1$ be the uniform distribution on $V(G_1)$,
	and let $\mathcal{D}_2$ be the distribution on $V(G_2)$ which assigns weight $\frac{1}{|V(G_1)|}$ to each $u \in V(G_1) \subseteq V(G_2)$ and weight\footnote{Evidently, if one does not wish to allow vertices of weight $0$, then one can instead assign to $v$ a weight tending to $0$; or, more accurately, a weight that is small enough with respect to (the inverse of) the sample complexity of an alleged tester for $\mathcal{P}$ (in a proof by contradiction that such a tester does not exist).} $0$ to $v$.
	
	It is clear that for every integer $q$, a sample of $q$ vertices from $G_1$ according to $\mathcal{D}_1$ is indistinguishable from a sample of $q$ vertices from $G_2$ according to $\mathcal{D}_2$. Observe that $G_1$ satisfies $\mathcal{P}$ while $(G_2,\mathcal{D}_2)$ is $\frac{1}{|V(G_1)|^2}$-far from $\mathcal{P}$. To see that the latter statement is true, observe that by our choice of $G_1$, no matter how we change the bipartite graph between $v$ and $V(G_1)$, we will always get a graph that does not satisfy $\mathcal{P}$. Hence, in order to make $G_2$ satisfy $\mathcal{P}$, one must change the adjacency relation between a pair of vertices from $V(G_1)$, whose weight (under $\mathcal{D}_2$) is $\frac{1}{|V(G_1)|}$.
	
	Now, the fact that $(G_1,\mathcal{D}_1)$ and $(G_2,\mathcal{D}_2)$ are indistinguishable implies that $\mathcal{P}$ is not testable\footnote{\label{footnote:hereditary_non_extendable}We note that if $\mathcal{P}$ is non-extendable but hereditary, then one can easily obtain infinitely many examples showing that $\mathcal{P}$ is not testable (rather than just the one example given in the proof of Proposition \ref{prop:hard_extendable}). Indeed, instead of adding just one vertex to $G_1$, one can add to $G_1$ any number $k$ of vertices (for a large $k$), and give these new vertices weight $o(1/k)$, while distributing the remaining weight uniformly among the vertices of $G_1$ (note that such an assignment is precisely what the setting of Theorem \ref{theo:rest2} forbids). The assumption that $\mathcal{P}$ is hereditary implies that every graph obtained in this way is $\frac{1-o(1)}{|V(G_1)|^2}$-far from satisfying \nolinebreak $\mathcal{P}$. Also, if the weight given to the ``new'' vertices is small enough, then these two weighted graphs are indistinguishable by a sample of any prescribed size.} in the VDF \nolinebreak model.
\end{proof}

\begin{proposition}\label{prop:hard_hereditary}
	If a graph property $\mathcal{P}$ is not hereditary, then $\mathcal{P}$ is not testable in the VDF model.
\end{proposition}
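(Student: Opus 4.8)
The plan is to follow the template of the proof of Proposition~\ref{prop:hard_extendable}: produce a graph $G$ satisfying $\mathcal{P}$ together with a vertex-weighted graph $(H,\mathcal{D})$ that is $\varepsilon$-far from $\mathcal{P}$, in such a way that no tester can distinguish the two. First I would extract a convenient witness to non-heredity. Since $\mathcal{P}$ is not hereditary, there are a graph $G\in\mathcal{P}$ and a vertex $v\in V(G)$ with $H:=G-v\notin\mathcal{P}$: take a graph in $\mathcal{P}$ that has an induced subgraph not in $\mathcal{P}$, let $H$ be such an induced subgraph of maximum order, and let $G$ be $H$ together with one further vertex $v$ of that graph, which lies in $\mathcal{P}$ by the maximality of $H$. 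Write $W:=V(H)$ and $n:=|W|$ (here $n\geq 1$, since a property can fail to be hereditary only via an induced subgraph on at least one vertex).

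The one point that requires a small observation is that $H$ need not be ``far'' from $\mathcal{P}$ in any strong quantitative sense; it is enough that it is far for a single, possibly tiny, value of the proximity parameter. Concretely, let $\mathcal{D}$ be the uniform distribution on $W$ and set $\varepsilon_0:=1/n^2$. Then $(H,\mathcal{D})$ is $\varepsilon_0$-far from $\mathcal{P}$: any $G'\in\mathcal{P}$ on the vertex set $W$ differs from $H$ in at least one pair of vertices (as $H\notin\mathcal{P}$), which already contributes distance $(1/n)(1/n)=\varepsilon_0$; and graphs in $\mathcal{P}$ on any other vertex set are trivially at infinite distance from $H$.

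Finally I would set up the indistinguishable pair and derive the contradiction. Let Instance~A be $(G,\mathcal{D}_A)$, where $\mathcal{D}_A$ assigns weight $0$ to $v$ and weight $1/n$ to each vertex of $W$ (exactly as in the proof of Proposition~\ref{prop:hard_extendable}; if one does not wish to allow weight $0$, assign to $v$ a weight that is tiny relative to the sample complexity of the alleged tester), and let Instance~B be $(H,\mathcal{D})$ from above. Since $v$ is never sampled in Instance~A and the tester only queries pairs among its sampled vertices (so it can never query $v$), any run of a tester on Instance~A produces samples that are independent and uniformly distributed on $W$, with edge queries answered according to $G[W]=H$ --- identically distributed to a run on Instance~B. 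Hence, run with proximity parameter $\varepsilon_0$, the tester has the same output distribution on the two instances; but it must accept Instance~A with probability at least $2/3$ (as $G\in\mathcal{P}$) and reject Instance~B with probability at least $2/3$ (as $(H,\mathcal{D})$ is $\varepsilon_0$-far from $\mathcal{P}$), which is impossible. I do not expect any genuine obstacle here: the only things to watch for are noticing that $\varepsilon_0$ may be taken this small (so that $H\notin\mathcal{P}$ alone suffices, with no need for $H$ to be far from $\mathcal{P}$ in a robust sense) and the harmless degenerate case $n=0$, which, as noted, does not occur.
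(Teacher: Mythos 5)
Your proof is correct and follows essentially the same approach as the paper: take a pair $G_1\in\mathcal{P}$, $G_2\notin\mathcal{P}$ with $G_2$ an induced subgraph of $G_1$, endow $G_2$ with the uniform distribution and $G_1$ with the distribution concentrated on $V(G_2)$, observe that the two inputs are indistinguishable yet one satisfies $\mathcal{P}$ while the other is $1/|V(G_2)|^2$-far from it. Your extra step of shrinking the witness to a single added vertex via a maximality argument is harmless but unnecessary --- the paper simply uses an arbitrary induced subgraph.
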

\begin{proof}
	Since $\mathcal{P}$ is not hereditary, there is a graph $G_1$ and an induced subgraph $G_2$ of $G_1$, such that $G_1$ satisfies $\mathcal{P}$ but $G_2$ does not. Let $\mathcal{D}_2$ be the uniform distribution on $V(G_2)$, and let $\mathcal{D}_1$ be the distribution on $V(G_1)$ which is supported on $V(G_2) \subseteq V(G_1)$ and uniform when conditioned on $V(G_2)$, i.e.
	$\mathcal{D}_1(u) = \frac{1}{|V(G_2)|}$ if $u \in V(G_2)$ and $\mathcal{D}_1(u) = 0$ if $u \in V(G_1) \setminus V(G_2)$.
	Clearly, for every integer $q$, a sample of $q$ vertices from $G_1$ according to $\mathcal{D}_1$ is indistinguishable from a sample of $q$ vertices from $G_2$ according to $\mathcal{D}_2$. Also, $G_1$ satisfies $\mathcal{P}$, whereas $(G_2,\mathcal{D}_2)$ is $\frac{1}{|V(G_2)|^2}$-far from $\mathcal{P}$ because $G_2 \notin \mathcal{P}$. Thus, $\mathcal{P}$ is not testable\footnote{In analogy to Footnote \ref{footnote:hereditary_non_extendable}, we note that if $\mathcal{P}$ is non-hereditary but extendable then one can obtain infinitely many examples showing that $\mathcal{P}$ is not testable (rather than just the one given in the proof of Proposition \ref{prop:hard_hereditary}).
		Indeed, the extendability of $\mathcal{P}$ implies that there are arbitrarily large graphs which satisfy $\mathcal{P}$ and contain $G_1$ (and hence also $G_2$) as an induced subgraph. Each of these graphs (together with an appropriate distribution, as in the proof of Proposition \ref{prop:hard_hereditary}) is a witness to the non-testability \nolinebreak of \nolinebreak $\mathcal{P}$.} in the VDF model.
\end{proof}

\section{On Variations of the VDF Model and Related Problems}\label{sec:restricted_models}
In the following two subsections we prove Theorems \ref{theo_rest3}, \ref{theo_rest4}, \ref{theo:rest1} and \ref{theo:rest2}. We then consider two additional problems related to the VDF model; one problem asks if the query complexity in the VDF model is the same as in the standard model (for $\mathcal{P}$ that are testable
in the VDF model), and the other asks for a characterization of the properties that are testable in variants of the VDF model
(as in Theorems \nolinebreak \ref{theo_rest3}-\ref{theo:rest2}).
We start by giving the precise definitions of the settings considered in Theorems \ref{theo_rest3}-\ref{theo:rest2}. 

\paragraph{The ``large inputs" model}
In this model, a property $\mathcal{P}$ is testable if there exists a function $M_{\mathcal{P}} : (0,1) \rightarrow \mathbb{N}$ such that for every
$\varepsilon > 0$, $\mathcal{P}$ is $\varepsilon$-testable with sample complexity depending only on $\varepsilon$ under the promise that inputs $(G,\mathcal{D})$ always satisfy $|V(G)| \geq \nolinebreak M_{\mathcal{P}}(\varepsilon)$.

\paragraph{The ``size-aware" model} In this model, testers are allowed to receive, as part of the input, the number of vertices of the input graph. 

\paragraph{The ``no heavy-weights" (NHW) model} In this model, a property $\mathcal{P}$ is testable if there exists a function $c_{\mathcal{P}} : (0,1) \rightarrow (0,1)$ such that for every $\varepsilon > 0$, $\mathcal{P}$ is $\varepsilon$-testable with sample complexity depending only on $\varepsilon$ under the promise that inputs $(G,\mathcal{D})$ always satisfy
$\max_{v \in V(G)}{\mathcal{D}(v)} \leq c_{\mathcal{P}}(\varepsilon)$.

\paragraph{The ``no light-weights" (NLW) model} In this model, a property $\mathcal{P}$ is testable if for all $\varepsilon, \delta > 0$, $\mathcal{P}$ is $\varepsilon$-testable with sample complexity depending only on $\varepsilon$ and $\delta$ under the promise that inputs $(G,\mathcal{D})$ always satisfy
$\min_{v \in V(G)}{\mathcal{D}(v)} \geq \delta/|V(G)|$.

\vspace{0.3cm}
Theorem \ref{theo_rest3} (resp. \ref{theo_rest4}, \ref{theo:rest1}, \ref{theo:rest2}) then states that every hereditary property is testable in the ``large inputs" (resp. ``size-aware", NHW, NLW) model\footnote{Note that if $\mathcal{P}$ is testable in the ``large inputs" model then it is also testable in the NHW model, because by setting $c_{\mathcal{P}}(\varepsilon) := 1/M_{\mathcal{P}}(\varepsilon)$ we can make sure that the input graph has at least $M_{\mathcal{P}}(\varepsilon)$ vertices. Still, we decided to include a separate proof for Theorem \ref{theo:rest1} (instead of deducing it from Theorem \ref{theo_rest3}) for two reasons: one is that in the course of the proof we resolve another open question raised in \cite{Goldreich_VDF}; and the other is that our proof of Theorem \ref{theo:rest1} shows that $\mathcal{P}$ is testable (in the NHW model) by a tester that accepts if and only if the subgraph induced by the sample satisfies $\mathcal{P}$, whereas the tester given by the proof of Theorem \ref{theo_rest3} is not always of this form.}.

\subsection{Proof of Theorems \ref{theo_rest3}, \ref{theo_rest4} and \ref{theo:rest2}}
\label{subsec:no_low_weight_vertices}
In this subsection we prove Theorems \ref{theo_rest3}, \ref{theo_rest4} and \ref{theo:rest2}, i.e. we show that every hereditary property is testable (with one-sided error) in the ``large inputs", ``size-aware" and NLW models.
Let us introduce some definitions that we will use throughout this subsection. Let $\mathcal{P}$ be a hereditary graph property. 
A graph $F$ is called {\em $\mathcal{P}$-good} if for every $r \geq |V(F)|$ there is an $r$-vertex graph which satisfies $\mathcal{P}$ and contains $F$ as an induced subgraph; this in particular implies that $F$ itself satisfies $\mathcal{P}$. If $F$ is not $\mathcal{P}$-good then it is called 
{\em ${\mathcal{P}}$-bad}, and we denote by $r_{\mathcal{P}}(F)$ the minimal $r \geq |V(F)|$ such that there is no $r$-vertex graph which satisfies $\mathcal{P}$ and contains $F$ as an induced subgraph. In particular, if $F$ does not satisfy $\mathcal{P}$ then it is $\mathcal{P}$-bad and $r_{\mathcal{P}}(F) = |V(F)|$. 
Note that since $\mathcal{P}$ is hereditary, if $F$ is $\mathcal{P}$-bad then there is no graph on $r$ vertices for any $r \geq r_{\mathcal{P}}(F)$ which satisfies $\mathcal{P}$ and contains $F$ as an induced subgraph.
Now let $\mathcal{H} = \mathcal{H}(\mathcal{P})$ be the property of being $\mathcal{P}$-good. Then $\mathcal{H} \subseteq \mathcal{P}$ and $\mathcal{H}$ is hereditary, which follows from the definition of $\mathcal{P}$-goodness and the fact that $\mathcal{P}$ is hereditary.
Observe moreover that $\mathcal{H}$ is extendable.
Indeed, let $G \in \mathcal{H}$, and suppose, for the sake of contradiction, that for every $G'$ on $|V(G)|+1$ vertices which contains $G$ as an induced subgraph, it holds that $G' \notin \mathcal{H}$. Then for every such $G'$, there is no graph on $r_{\mathcal{P}}(G')$ vertices that satisfies $\mathcal{P}$ and contains $G'$ as an induced subgraph. But this means that there is no graph on $\max_{G'}{r_{\mathcal{P}}(G')}$ vertices which satisfies $\mathcal{P}$ and contains $G$ as an induced subgraph, in contradiction to $G \in \mathcal{H}$. 
We note also that if $\mathcal{P}$ itself is extendable then $\mathcal{H} = \mathcal{P}$. 

For an integer $s \geq 1$, let $R_{\mathcal{P}}(s)$ be the maximum of $r_{\mathcal{P}}(F)$ over all $\mathcal{P}$-bad graphs $F$ with at most $s$ vertices; 
if no such graphs exist, we set $R_{\mathcal{P}}(s) = 0$ (this will not matter later on).
We are now ready to prove Theorem \ref{theo_rest3}, which we rephrase as follows.
\begin{proposition}\label{lem:large_number_of_vertices_tester}
	For every hereditary property $\mathcal{P}$ there are functions $M_{\mathcal{P}},s_{\mathcal{P}} : (0,1) \rightarrow \mathbb{N}$ such that for every $\varepsilon > 0$, the property $\mathcal{P}$ is $\varepsilon$-testable with one-sided error and sample complexity $s_{\mathcal{P}}(\varepsilon)$ under the promise that inputs $(G,\mathcal{D})$ always satisfy $|V(G)| \geq M_{\mathcal{P}}(\varepsilon)$.
\end{proposition}
\begin{proof}
	Consider the (extendable and hereditary) property $\mathcal{H} = \mathcal{H}(\mathcal{P})$ defined above.
	By Theorem \ref{statement:ours}, there is a function $s_{\mathcal{H}} : (0,1) \rightarrow \mathbb{N}$ such that for every $\varepsilon > 0$ and for every vertex-weighted graph $(G,\mathcal{D})$ which is $\varepsilon$-far from $\mathcal{H}$, a sample of $s$ vertices from $G$ (taken from $\mathcal{D}$) induces a subgraph which does not satisfy $\mathcal{H}$ with probability at least $\frac{2}{3}$.
	
	Our (``large inputs"-model) tester for $\mathcal{P}$ samples $s_{\mathcal{H}}(\varepsilon)$ vertices, and accepts if and only if the subgraph induced by the sample satisfies $\mathcal{H}$.
	We prove the proposition with
	$
	M = M_{\mathcal{P}}(\varepsilon) := R_{\mathcal{P}}(s_{\mathcal{H}}(\varepsilon)).
	$
	
	Let $(G,\mathcal{D})$ be a vertex-weighted graph with $|V(G)| \geq M$. Suppose first that $G$ satisfies $\mathcal{P}$. Our goal is to show that the subgraph induced by a sample of $s_{\mathcal{H}}(\varepsilon)$ vertices, taken from $\mathcal{D}$ and independently, satisfies $\mathcal{H}$ with probability $1$. So suppose by contradiction that $G$ contains an induced subgraph $F$ on at most $s_{\mathcal{H}}(\varepsilon)$ vertices which does not satisfy $\mathcal{H}$. In other words, $F$ is $\mathcal{P}$-bad. By the definition of $r_{\mathcal{P}}(F)$, there is no graph on $r_{\mathcal{P}}(F)$ vertices which satisfies $\mathcal{P}$ and contains $F$ as an induced subgraph. As $|V(G)| \geq M = R_{\mathcal{P}}(s_{\mathcal{H}}(\varepsilon)) \geq r_{\mathcal{P}}(F)$, and as $\mathcal{P}$ is hereditary, we get that $G$ does not satisfy $\mathcal{P}$, a contradiction.
	
	Suppose now that $(G,\mathcal{D})$ is $\varepsilon$-far from $\mathcal{P}$.
	Then $(G,\mathcal{D})$ is also $\varepsilon$-far from $\mathcal{H}$, as $\mathcal{H} \subseteq \mathcal{P}$. By our choice of
	$s_{\mathcal{H}}(\varepsilon)$, a sample of $s_{\mathcal{H}}(\varepsilon)$ vertices of $G$, taken from $\mathcal{D}$ and independently, does not satisfy $\mathcal{H}$ with probability at least $\frac{2}{3}$. So our tester rejects $(G,\mathcal{D})$ with probability at least $\frac{2}{3}$, as required.
\end{proof}
It is natural to ask whether we can replace the function $M_{\mathcal{P}}(\varepsilon)$ in Lemma \ref{lem:large_number_of_vertices_tester} by a constant depending only on $\mathcal{P}$ (and not on $\varepsilon$). As is shown in the following proposition, we cannot.

\begin{proposition}\label{prop:constant_bound_on_num_of_vertices}
	There is a hereditary property $\mathcal{P}$ such that for every $M > 0$, there is no tester for $\mathcal{P}$ in the VDF model even if we are guaranteed that the input graph has at least $M$ vertices.
\end{proposition}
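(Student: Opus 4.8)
The plan is to exhibit a hereditary property $\mathcal{P}$ that contains, for every $k$, some "good" graph $G_k$ whose only small induced subgraphs are uninformative, but such that $G_k$ becomes $\mathcal{P}$-violating under tiny perturbations that the tester cannot detect with few queries. Concretely, I would take a property that is a disjoint union (over all $k$) of certificates of the following flavour: for each $k$ there is a graph $G_k$ on roughly $n_k$ vertices which satisfies $\mathcal{P}$, together with a second graph $G_k'$ on the same vertex set, differing from $G_k$ in only a single adjacency, which is $\mathcal{P}$-bad in the strong sense that no graph on $n_k$ vertices containing $G_k'$ as an induced subgraph satisfies $\mathcal{P}$; moreover $G_k$ and $G_k'$ should be locally indistinguishable in the sense that there is a distribution $\mathcal{D}_k$ on the common vertex set under which a sample of $o(\text{something growing in }k)$ vertices from $(G_k,\mathcal{D}_k)$ and from $(G_k',\mathcal{D}_k)$ have the same distribution. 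The point is that while Lemma \ref{lem:large_number_of_vertices_tester} gives a tester once $|V(G)| \geq M_{\mathcal{P}}(\varepsilon)$, here we want to fix $M$ first and then choose $k$ large enough that $n_k > M$ but the sample size needed to separate $G_k$ from $G_k'$ exceeds what any fixed query complexity $q(\varepsilon)$ allows.

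The first step is to construct the gadget pair $(G_k, G_k')$. A clean way is to let $G_k$ consist of a large "bulk" part $B_k$ (say an independent set, or a fixed structured graph) on $n_k - c$ vertices, plus a constant-size "switch" on $c$ vertices, and define $\mathcal{P}$ so that membership is governed entirely by a relation between the switch and the bulk that can be toggled by one edge. I would set $\mathcal{D}_k$ to put weight $\Theta(1)$ on the $c$ switch vertices and weight $o(1/n_k)$ — or weight $0$, exactly as in the proof of Proposition \ref{prop:hard_extendable} — on most of the bulk, with a single distinguished bulk vertex $w_k$ carrying weight $\Theta(1)$; the single adjacency that differs between $G_k$ and $G_k'$ is then between $w_k$ and a switch vertex, but we also plant many "decoy" vertices in the bulk, each of weight $o(1/n_k)$, some of which look (locally, to a tester) like $w_k$. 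Because the tester samples from $\mathcal{D}_k$ and never sees the decoys, and because $G_k$ satisfies $\mathcal{P}$ while $(G_k',\mathcal{D}_k)$ is $\Omega(1)$-far from $\mathcal{P}$ (by the same one-edge-among-heavy-vertices argument used in Proposition \ref{prop:hard_extendable}), the two inputs are statistically indistinguishable to any tester using $q(\varepsilon)$ samples, provided $k$ is large enough that the heavy structure of $G_k$ and $G_k'$ cannot be pinned down with $q$ queries.

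The second step is to verify that the resulting $\mathcal{P}$ — namely, the property whose satisfying graphs are exactly those appearing as induced subgraphs of some $G_k$, together with possibly some closure to make it hereditary — is indeed hereditary; this is automatic if we define $\mathcal{P}$ as induced-$\mathcal{F}$-freeness for an appropriate family $\mathcal{F}$ of forbidden induced subgraphs derived from the $G_k'$ gadgets. The third step is the diagonalization: given any putative tester with sample complexity $q(\varepsilon_0)$ for a fixed small $\varepsilon_0$, and given the supposed vertex-count guarantee $M$, pick $k$ with $n_k > M$ and with the "resolution threshold" of the gadget exceeding $q(\varepsilon_0)$; then feeding the tester $(G_k,\mathcal{D}_k)$ versus $(G_k',\mathcal{D}_k)$ forces the same output distribution on a yes-instance and an $\varepsilon_0$-far instance, contradicting correctness.

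The main obstacle I expect is the simultaneous juggling of three requirements on a single construction: (i) $\mathcal{P}$ must be genuinely hereditary (so the gadgets must be phrased as forbidden induced subgraphs, and one must check no unintended small obstruction creeps in that would let a constant-size sample reject a good graph); (ii) the far-ness of $(G_k',\mathcal{D}_k)$ from $\mathcal{P}$ must be $\Omega(1)$, i.e. bounded below independently of $k$, which forces the toggled edge to sit between two $\Theta(1)$-weight vertices and forces the gadget design to rule out cheap repairs; and (iii) indistinguishability must hold against \emph{adaptive} testers with $q(\varepsilon)$ queries for every fixed $\varepsilon$, which means the "heavy skeleton" of $G_k$ has to be rich enough (many decoy configurations) that $q$ samples cannot localize the critical edge — this is where $k$ must be taken large relative to $q(\varepsilon)$, and it is precisely why no \emph{constant} $M$ can work even though Lemma \ref{lem:large_number_of_vertices_tester} gives an $\varepsilon$-dependent one. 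I would handle (iii) by a standard coupling/indistinguishability argument in the spirit of the proofs of Propositions \ref{prop:hard_extendable} and \ref{prop:hard_hereditary}, extended so that the number of decoys grows with $k$.
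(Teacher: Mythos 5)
Your proposal heads in a direction that cannot work, and the obstruction is one you yourself flag in item (ii) of your closing paragraph. You insist that $(G_k',\mathcal{D}_k)$ be $\Omega(1)$-far from $\mathcal{P}$ with a bound independent of $k$, and you correctly observe that this forces the toggled adjacency to lie between two vertices each of weight $\Theta(1)$. But then any tester, with a number of samples depending only on $\varepsilon$ (and hence on a constant), will with constant probability sample \emph{both} endpoints of the toggled pair and query that single edge, thereby distinguishing $G_k$ from $G_k'$ outright. The ``decoy'' vertices you introduce do nothing to save this: they have weight $o(1/n_k)$, so the tester essentially never samples them, and in particular they cannot prevent the tester from learning the full induced subgraph on the $O(1)$ heavy vertices. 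Since the toggled edge is among the heavy vertices, the two inputs are \emph{not} statistically indistinguishable; they are trivially separable with $O(1)$ queries. There is no choice of $k$ that repairs this.

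The fix is to abandon the $\Omega(1)$-farness requirement; it is simply not needed. The statement only asks that for every constant $M$ there be \emph{some} $\varepsilon$ (which may depend on $M$) and a yes/no pair of inputs on at least $M$ vertices that any fixed-complexity tester confuses. The paper's construction does exactly this with the farness shrinking in $M$: take $\mathcal{P}$ to be $\{C_k^*:k\geq 3\}$-freeness, where $C_k^*$ is the $k$-cycle with one extra isolated vertex. For a given $M$, the yes-instance is $G=C_M$ with the uniform distribution, and the no-instance is $G'=C_M^*$ with a distribution giving the isolated vertex weight $0$ and uniform weight to the cycle. These two inputs have \emph{identical} sample distributions (not merely statistically close), so no tester of any complexity can tell them apart, yet $G'$ is $\tfrac{1}{M^2}$-far from $\mathcal{P}$, and both have at least $M$ vertices. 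Conceptually this is the same mechanism as in Proposition~\ref{prop:hard_extendable} — add a vertex and give it weight $0$ — run on a yes-instance that grows with $M$. Replacing your ``toggle an edge between heavy vertices and hope decoys mask it'' mechanism with ``add a weight-$0$ vertex'' is essential: the former is never information-theoretically hidden, while the latter is perfectly hidden.
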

\begin{proof}
	For each $k \geq 3$, let $C_k^*$ be the graph obtained from the $k$-cycle $C_k$ by adding an isolated vertex.
	Consider the property $\mathcal{P} = \{C_k^* : k \geq 3\}$-freeness. Let $M > 0$. Set $G = C_M$ and $G' = C_M^*$. Let $\mathcal{D}$ be the uniform distribution on $V(G)$, and let $\mathcal{D}'$ be the distribution on $V(G')$ which assigns weight $0$ to the isolated vertex in $G'$, and is uniform on the rest of the vertices of $G'$. Then $G \in \mathcal{P}$ and $(G',\mathcal{D}')$ is $\frac{1}{M^2}$-far from $\mathcal{P}$, but a sample (of any number of vertices) from $(G,\mathcal{D})$ is indistinguishable from a sample of the same size from $(G',\mathcal{D}')$. This shows that $\mathcal{P}$ is not testable even if we require input graphs to have at least $M$ vertices.
\end{proof}
\noindent
We now move on to prove Theorem \ref{theo_rest4}. 
\begin{proof}[Proof of Theorem \ref{theo_rest4}]
	Let $\mathcal{P}$ be a hereditary graph property. Our goal is to design (and prove the correctness of) a one-sided-error tester for $\mathcal{P}$ in the VDF model, provided that the tester receives $|V(G)|$ as part of the input. Let $M_{\mathcal{P}} : (0,1) \rightarrow \mathbb{N}$ be as in Lemma \ref{lem:large_number_of_vertices_tester}. On input $\varepsilon \in (0,1)$, $G$ and $\mathcal{D}$ (where $G$ is a graph and $\mathcal{D}$ is a distribution on $V(G)$), our tester works as follows:
	\begin{enumerate}
		\item If $|V(G)| \geq M_{\mathcal{P}}(\varepsilon)$, then invoke the tester whose existence is guaranteed by Lemma \ref{lem:large_number_of_vertices_tester}, and accept if and only if this tester accepts. 
		\item Otherwise, i.e. if $|V(G)| < M_{\mathcal{P}}(\varepsilon)$, then do the following: setting $M := M_{\mathcal{P}}(\varepsilon)$ and $t := M\log(3M)/\varepsilon$, sample vertices $u_1,\dots,u_t \in V(G)$ according to $\mathcal{D}$ and independently, and put $U := \{u_1,\dots,u_t\}$. Accept if and only if there exists a graph on $|V(G)|$ vertices which satisfies $\mathcal{P}$ and contains $G[U]$ as an induced subgraph (in the notation introduced at the beginning of this subsection, this is the same as saying that $r_{\mathcal{P}}(G[U]) > |V(G)|$). 
	\end{enumerate} 
	Let us prove the correctness of our tester. First, Lemma \ref{lem:large_number_of_vertices_tester} guarantees that if $|V(G)| \geq M_{\mathcal{P}}(\varepsilon)$ then the tester works correctly; namely, it accepts with probability $1$ if $G \in \mathcal{P}$, and rejects with probability at least $\frac{2}{3}$ if $(G,\mathcal{D})$ is $\varepsilon$-far from $\mathcal{P}$. 
	
	So from now on we may assume that $|V(G)| < M_{\mathcal{P}}(\varepsilon)$. 
	Suppose first that $G \in \mathcal{P}$. 
	Evidently, for every $U \subseteq V(G)$ there is a graph on $|V(G)|$ vertices which satisfies $\mathcal{P}$ and contains $G[U]$ as an induced subgraph (indeed, $G$ is such a graph). Hence, the tester accepts $G$ with probability $1$ (see Item 2). 
	
	Now suppose that $(G,\mathcal{D})$ is $\varepsilon$-far from $\mathcal{P}$. 
	Observe that for each $v \in V(G)$, the probability that $v \notin U$ is 
	$$
	\left( 1 - \mathcal{D}(v) \right)^t \leq e^{-\mathcal{D}(v) \cdot t} = 
	\left( \frac{1}{3M} \right)^{-\mathcal{D}(v) \cdot M/\varepsilon} \; .
	$$
	By taking the union bound over all (at most $|V(G)| < m$) vertices $v \in V(G)$ which satisfy 
	$\mathcal{D}(v) \geq \varepsilon/M$, we see that the probability that there is 
	$v \in V(G) \setminus U$ with $\mathcal{D}(v) \geq \varepsilon/M$, is at most $\frac{1}{3}$. Suppose that every $v \in V(G) \setminus U$ satisfies $\mathcal{D}(v) < \varepsilon/M$ (this happens with probability at least $\frac{2}{3}$). Then $\mathcal{D}(V(G) \setminus U) < |V(G)| \cdot \varepsilon/M < \varepsilon$ (where in the last inequality we used our assumption that $|V(G)| <  M$). Now, if (by contradiction) there is a graph $G'$ on $|V(G)|$ vertices which satisfies $\mathcal{P}$ and contains $G[U]$ as an induced subgraph, then one can turn $G$ into $G'$ by only adding/deleting edges which are incident to vertices in $V(G) \setminus U$. Since $\mathcal{D}(V(G) \setminus U) < \varepsilon$, this stands in contradiction to the assumption that $(G,\mathcal{D})$ is $\varepsilon$-far from $\mathcal{P}$. We conclude that there is no such graph $G'$. This implies that $(G,\mathcal{D})$ is rejected with probability at least $\frac{2}{3}$, as required. 
\end{proof}


Finally, we prove Theorem \ref{theo:rest2}, i.e. that every hereditary property is testable in the NLW model. We restate this theorem as follows.

\begin{proposition}\label{lem:no_small_weight_vertices_tester}
	For every hereditary property $\mathcal{P}$ there is a function
	$t_{\mathcal{P}} : (0,1)^2 \rightarrow \mathbb{N}$ such that for all $\varepsilon,\delta > 0$, the property
	$\mathcal{P}$ is $\varepsilon$-testable with one-sided error and sample complexity $t_{\mathcal{P}}(\varepsilon,\delta)$ under the promise that inputs $(G,\mathcal{D})$ always satisfy
	$\min_{v \in V(G)}{\mathcal{D}(v)} \geq \delta/|V(G)|$.
\end{proposition}
\begin{proof}
	We start by specifying the function $t_{\mathcal{P}}(\varepsilon,\delta)$. Consider the (extendable and hereditary) property $\mathcal{H} = \mathcal{H}(\mathcal{P})$ defined above. 
	By Theorem \ref{statement:ours}, there is a function $s_{\mathcal{H}} : (0,1) \rightarrow \mathbb{N}$ such that for every $\varepsilon > 0$ and for every vertex-weighted graph $(G,\mathcal{D})$ which is $\varepsilon$-far from $\mathcal{H}$, a sample of $s_{\mathcal{H}}(\varepsilon)$ vertices of $G$ (taken from $\mathcal{D}$) induces a subgraph which does not satisfy $\mathcal{H}$ with probability\footnote{The statement of Theorem \ref{statement:ours} only guarantees a success probability of $\frac{2}{3}$, but this can clearly be amplified to $\frac{5}{6}$ by repeating the experiment $O(1)$ times.} at least $\frac{5}{6}$. Now set
	$R := R_{\mathcal{P}}(s_{\mathcal{H}}(\varepsilon))$ and
	$$
	t = t_{\mathcal{P}}(\varepsilon,\delta) :=
	\max\left\{ s_{\mathcal{H}}(\varepsilon), \; 2R\log(6R)/\delta \right\}.
	$$
	
	Our tester for $\mathcal{P}$ in the NLW model simply samples a sequence of $t_{\mathcal{P}}(\varepsilon,\delta)$ vertices of the input and accepts if and only if the subgraph induced by the sample satisfies $\mathcal{P}$. Evidently, this tester accepts with probability $1$ if the input satisfies $\mathcal{P}$. So to establish the correctness of our tester, it suffices to show that it rejects with probability at least $\frac{2}{3}$ if the input $(G,\mathcal{D})$ is $\varepsilon$-far from $\mathcal{P}$.

	Let $\varepsilon,\delta > 0$, and let $(G,\mathcal{D})$ be a vertex-weighted graph on $n$ vertices which is $\varepsilon$-far from $\mathcal{P}$, and in which all vertices have weight at least $\delta/n$. Let $u_1,\dots,u_t$ be a sequence of $t = t_{\mathcal{P}}(\varepsilon,\delta)$ random vertices of $G$, sampled according to $\mathcal{D}$ and independently, and set $U = \{u_1,\dots,u_t\}$.  We need to show that with probability at least $\frac{2}{3}$, $G[U]$ does not satisfy $\mathcal{P}$.
	Suppose first that $n < 2R$. We claim that in this case we have $U = V(G)$ with probability at least $\frac{2}{3}$ (this is clearly sufficient because $G$ itself does not satisfy $\mathcal{P}$). For a vertex $v \in V(G)$, the probability that $u_i \neq v$ for every $1 \leq i \leq t$ is
	$$
	\left( 1 - \mathcal{D}(v) \right)^t \leq
	\left( 1 - \frac{\delta}{n} \right)^t <
	\left( 1 - \frac{\delta}{2R} \right)^t \leq
	e^{-\frac{\delta t}{2R}} \leq
	\frac{1}{6R} \; .
	$$
	So by the union bound over all $n < 2R$ vertices of $G$, we see that with probability at least $\frac{2}{3}$, $U = V(G)$, as required.
	
	Suppose now that
	$n \geq 2R$. 
	Our choice of $s = s_{\mathcal{H}}(\varepsilon)$ guarantees that with probability at least $\frac{5}{6}$, the graph $F := G[\{u_1,\dots,u_s\}]$ does not satisfy $\mathcal{H}$, meaning that it is $\mathcal{P}$-bad.  
	We will now show that with probability at least $\frac{5}{6}$, we have
	$|U| \geq R$. This will imply that with probability at least $\frac{2}{3}$, $G[U]$ contains as an induced subgraph a $\mathcal{P}$-bad graph $F$ on at most $s_{\mathcal{H}}(\varepsilon)$ vertices, and also
	$|U| \geq R =  R_{\mathcal{P}}(s_{\mathcal{H}}(\varepsilon)) \geq r_{\mathcal{P}}(F)$. By the definition of $r_{\mathcal{P}}(F)$, this would imply that $G[U]$ does not satisfy $\mathcal{P}$, as required.
	
	So from now on, our goal is to show that $|U| \geq R$ with probability at least $\frac{5}{6}$. Fix a partition of $V(G)$ into $R$ sets $V_1,\dots,V_R$, each of size at least
	$\lfloor \frac{n}{R} \rfloor \geq \frac{n}{2R}$. 
	For each $1 \leq i \leq R$, let $A_i$ be the event that $U \cap V_i \neq \emptyset$. 
	Note that if $A_i$ occurs for every $1 \leq i \leq R$, then $|U| \geq R$. Since 
	$\mathcal{D}(V_i) \geq |V_i| \cdot \frac{\delta}{n} \geq 
	\frac{n}{2R} \cdot \frac{\delta}{n} = \frac{\delta}{2R}$, the probability that $A_i$ does not occur is at most
	$$
	\left( 1 -  \mathcal{D}(V_i) \right)^{t} \leq
	\left( 1 - \frac{\delta}{2R} \right)^{t} \leq
	e^{-\frac{\delta t}{2R}} \leq \frac{1}{6R}.
	$$
	By the union bound, the probability that there is $1 \leq i \leq R$ for which $A_i$ does not occur, is at most $\frac{1}{6}$, as required. This completes the proof. 
\end{proof}
\subsection{Proof of Theorem \ref{theo:rest1}}\label{subsec:no_high_weight_vertices}
In this subsection we prove Theorem \ref{theo:rest1}, i.e. we show that every hereditary property is testable in the NHW model. 
Again, we rephrase as follows. 

\begin{proposition}\label{lem:no_high_weight_vertices_tester}
	For every hereditary property $\mathcal{P}$ there are functions $t_{\mathcal{P}} : (0,1) \rightarrow \mathbb{N}$ and
	$c_{\mathcal{P}} : \nolinebreak (0,1) \rightarrow (0,1)$ such that for every $\varepsilon > 0$, the property
	$\mathcal{P}$ is $\varepsilon$-testable with one-sided error and sample complexity $t_{\mathcal{P}}(\varepsilon)$ under the promise that inputs $(G,\mathcal{D})$ always satisfy
	$\max_{v \in V(G)}{\mathcal{D}(v)} \leq c_{\mathcal{P}}(\varepsilon)$.
\end{proposition}

The key idea in the proof of Proposition \ref{lem:no_high_weight_vertices_tester}, which appeared in \cite{Goldreich_VDF}, is to ``blow up" the vertex-weighted graph $(G,\mathcal{D})$ by replacing each vertex $v$ with a vertex-set whose size is proportional to $\mathcal{D}(v)$, and thus obtain an (unweighted) graph $G'$, to which one can apply known testability results in the standard model.

Let us introduce some definitions. For a graph $G$, say on $V(G) = \{v_1,\dots,v_n\}$, and for integers $b_1,\dots,b_n \geq 0$, a {\em $(b_1,\dots,b_n)$-blowup} of $G$ is any graph admitting a vertex-partition $V_1 \cup \dots \cup V_n$ such that
$|V_i| = b_i$ for every $1 \leq i \leq n$, and such that the bipartite graph between $V_i$ and $V_j$ is complete if $\{v_i,v_j\} \in E(G)$ and empty if $\{v_i,v_j\} \notin E(G)$. The sets $V_1,\dots,V_n$ are called the {\em blowup-sets}. Note that we do not pose any restrictions on the graphs induced by the sets $V_1,\dots,V_n$; these graphs may be arbitrary.
For simplicity of presentation, we assume henceforth that all vertex-weights 
are rational\footnote{\label{footnote:irrational_weights}If one allows general (i.e. possibly irrational) weights, then it is necessary to change the definition of a $(\mathcal{D},N)$-blowup by rounding $\mathcal{D}(v_i) \cdot N$ to the closest integer. This results in an additive error of $\frac{n}{N}$ in the conclusion of Lemma \ref{lem:reduction}, due to rounding. Consequently, in (the proofs of) Propositions \ref{lem:no_high_weight_vertices_tester} and \ref{thm:reduction} we need to consider $(\mathcal{D},N)$-blowups with $N \rightarrow \infty$ in order to have this error term go to $0$. We also need to replace $\varepsilon$ in several places with (say) $\frac{\varepsilon}{2}$ (or any other number smaller than $\varepsilon$). For example, the conclusion of Proposition \ref{thm:reduction} should be that $\mathcal{P}$ is testable in the VDF model by a tester having one-sided error and sample complexity $q_{\mathcal{P}}(\varepsilon/2)$.}. 
Now let $\mathcal{D}$ be a distribution on $V(G) = \{v_1,\dots,v_n\}$, and let $N \in \mathbb{N}$ be such that $\mathcal{D}(v_i) \cdot N$ is an integer for every $1 \leq i \leq n$; such an $N$ is called {\em admissible}. A {\em $(\mathcal{D},N)$-blowup} of $G$ is a $(b_1,\dots,b_n)$-blowup of $G$ with
$b_i = \mathcal{D}(v_i) \cdot N$ for every $1 \leq i \leq n$.
Note that a blowup is always treated as ``unweighted" (in other words, the distribution on its vertices is uniform). 
Goldreich \cite{Goldreich_VDF} proved that for every graph $F$ and $\varepsilon \in (0,1)$, if a vertex-weighted graph $(G,\mathcal{D})$ is $\varepsilon$-far from being $F$-free, then for every admissible $N$,
any $(\mathcal{D},N)$-blowup of $G$ is
$\frac{\varepsilon}{\binom{|V(F)|}{2}}$-far
from being $F$-free.
Goldreich further asked whether the $\binom{|V(F)|}{2}^{-1}$-factor can be avoided. In the following lemma we show that this is indeed the case, and moreover that an analogous statement holds for {\em every} hereditary property. This lemma is also the key ingredient in the proof of Proposition \ref{lem:no_high_weight_vertices_tester}.
\begin{lemma}\label{lem:reduction}
	Let $\mathcal{P}$ be a hereditary graph property and let $(G,\mathcal{D})$ be a vertex-weighted graph which is $\varepsilon$-far from $\mathcal{P}$. Then for every admissible $N$, any $(\mathcal{D},N)$-blowup of $G$ is $\varepsilon$-far from $\mathcal{P}$.
\end{lemma}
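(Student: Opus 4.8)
The plan is to prove the contrapositive: I will assume that for some suitable $N$ there is a $(\mathcal{D},N)$-blowup $G'$ of $G$ which is \emph{not} $\varepsilon$-far from $\mathcal{P}$, and deduce that $(G,\mathcal{D})$ is not $\varepsilon$-far from $\mathcal{P}$ either. Write $V(G)=\{v_1,\dots,v_n\}$ and let $V_1,\dots,V_n$ be the blowup-sets of $G'$, so that $|V_i|=b_i=\mathcal{D}(v_i)\cdot N$ and $|V(G')|=\sum_{i}b_i=N$; for simplicity assume $\mathcal{D}$ has full support, so each $V_i$ is nonempty. Since $G'$ is unweighted, the distance on $V(G')$ is the uniform one, so the assumption that $G'$ is $\varepsilon$-close to $\mathcal{P}$ furnishes a graph $G''$ on $V(G')$ with $G''\in\mathcal{P}$ and $|E(G')\triangle E(G'')|<\varepsilon N^2$.

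The key step is to sample, for each $1\le i\le n$, a representative $w_i\in V_i$ uniformly at random and independently, and to define $G_0$ to be the graph on $\{v_1,\dots,v_n\}$ in which $\{v_i,v_j\}$ is an edge if and only if $\{w_i,w_j\}\in E(G'')$. Because $\mathcal{P}$ is hereditary, the subgraph of $G''$ induced on $\{w_1,\dots,w_n\}$ satisfies $\mathcal{P}$, and $G_0$ is isomorphic to this induced subgraph; hence $G_0\in\mathcal{P}$ for \emph{every} choice of the $w_i$. It therefore suffices to show that the $\mathcal{D}$-distance between $G$ and $G_0$ is smaller than $\varepsilon$ for some choice of the $w_i$.

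To bound the expected distance, note that since $G'$ is a blowup of $G$, for $i\ne j$ the bipartite graph between $V_i$ and $V_j$ is complete or empty according to whether $\{v_i,v_j\}\in E(G)$; consequently $\{w_i,w_j\}\in E(G')$ if and only if $\{v_i,v_j\}\in E(G)$, and therefore $\{v_i,v_j\}\in E(G)\triangle E(G_0)$ if and only if $\{w_i,w_j\}\in E(G')\triangle E(G'')$. The probability of the latter event (over the independent choices $w_i\in V_i$, $w_j\in V_j$) equals $e_{ij}/(b_i b_j)$, where $e_{ij}$ is the number of pairs with one endpoint in $V_i$ and one in $V_j$ lying in $E(G')\triangle E(G'')$. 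By linearity of expectation,
\begin{align*}
\mathbb{E}\left[\sum_{\{v_i,v_j\}\in E(G)\triangle E(G_0)}\mathcal{D}(v_i)\mathcal{D}(v_j)\right] &= \sum_{1\le i<j\le n}\frac{b_i b_j}{N^2}\cdot\frac{e_{ij}}{b_i b_j} = \frac{1}{N^2}\sum_{1\le i<j\le n}e_{ij}\\
&\le \frac{|E(G')\triangle E(G'')|}{N^2} < \varepsilon ,
\end{align*}
where the inequality $\sum_{i<j}e_{ij}\le|E(G')\triangle E(G'')|$ holds because pairs of $E(G')\triangle E(G'')$ lying inside a single blowup-set are simply discarded. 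Hence some choice of $w_1,\dots,w_n$ produces a graph $G_0\in\mathcal{P}$ at $\mathcal{D}$-distance less than $\varepsilon$ from $G$, so $(G,\mathcal{D})$ is not $\varepsilon$-far from $\mathcal{P}$, as required.

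I do not expect a genuine obstacle in this lemma: the content is entirely in the decision to pick a single representative per blowup-set, after which linearity of expectation does the work. The two points requiring a little care are the appeal to heredity (which keeps $G_0$ inside $\mathcal{P}$, and is exactly the place where the argument would break for a property that is only semi-hereditary), and the observation that modifications internal to a blowup-set are free of charge --- this is precisely what eliminates the $1/\binom{|V(F)|}{2}$ factor in Goldreich's earlier bound for $F$-freeness. Finally, when general (possibly irrational) weights are allowed, one replaces each $b_i=\mathcal{D}(v_i)N$ by the nearest integer and lets $N\to\infty$, the rounding contributing only a vanishing additive error, exactly as in the discussion following the definition of a $(\mathcal{D},N)$-blowup.
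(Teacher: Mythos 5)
Your proof is correct and is essentially the same argument as the paper's: you sample one representative $w_i$ from each blowup-set, observe that heredity keeps the induced subgraph of the modified blowup inside $\mathcal{P}$, and then use linearity of expectation to bound the expected $\mathcal{D}$-distance by $\frac{1}{N^2}|E(G')\triangle E(G'')|<\varepsilon$. The only cosmetic difference is that you phrase it as a contrapositive while the paper phrases it as a proof by contradiction.
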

\begin{proof}
	Fix any admissible $N$ and let $G'$ be a $(\mathcal{D},N)$-blowup of $G$. As above, we use $v_1,\dots,v_n$ to denote the vertices of $G$, and $V_1,\dots,V_n$ to denote the corresponding blowup sets.
	Suppose by contradiction that there is a graph $H'$ on $V(G')$ that satisfies $\mathcal{P}$ and is $\varepsilon$-close to $G'$. Let $H$ be the random graph defined as follows: the vertex-set of $H$ is $V(H) = V(G) = \{v_1,\dots,v_n\}$. To define the edge-set of $H$, sample for each $1 \leq i \leq n$ a vertex $u_i \in V_i$ uniformly at random, and make $\{v_i,v_j\}$ an edge in $H$ if and only if $\{u_i,u_j\}$ is an edge in $H'$ (for $1 \leq i < j \leq n$). Then $H$ satisfies $\mathcal{P}$ (with probability $1$) because $H$ is isomorphic to an induced subgraph of $H'$ and $\mathcal{P}$ is hereditary. Let us compute the expected distance between $H$ and $G$ (here the distance is with respect to the distribution $\mathcal{D}$). For each $1 \leq i < j \leq n$, the probability that $\{v_i,v_j\} \in E(G) \triangle E(H)$ \nolinebreak is \nolinebreak precisely
	$$
	\frac{|E_{G'}(V_i,V_j) \triangle E_{H'}(V_i,V_j)|}{|V_i||V_j|} = \frac{|E_{G'}(V_i,V_j) \triangle E_{H'}(V_i,V_j)|}
	{\mathcal{D}(v_i)\mathcal{D}(v_j)N^2} \; .
	$$
	Hence, the expected distance between $H$ and $G$ is
	\begin{align*}
	&\sum_{1 \leq i < j \leq n}{\mathcal{D}(v_i)\mathcal{D}(v_j) \cdot \frac{|E_{G'}(V_i,V_j) \triangle E_{H'}(V_i,V_j)|}
		{\mathcal{D}(v_i)\mathcal{D}(v_j)N^2}}
	= \\&\frac{1}{N^2}\sum_{1 \leq i < j \leq n}{|E_{G'}(V_i,V_j) \triangle E_{H'}(V_i,V_j)|} < \varepsilon,
	\end{align*}
	where the last inequality uses the assumption that $G'$ is $\varepsilon$-close to $H'$. So $G$ is $\varepsilon$-close to a graph $H$ which satisfies $\mathcal{P}$, a contradiction.
\end{proof}

By combining Lemma \ref{lem:reduction} with the result of \cite{AS_hereditary} (that all hereditary properties are testable with one-sided error in the standard model), we obtain the following: for every hereditary property $\mathcal{P}$, for every vertex-weighted graph $(G,\mathcal{D})$ which is $\varepsilon$-far from $\mathcal{P}$, for every admissible $N$ and for every $(\mathcal{D},N)$-blowup $G'$ of $G$, it holds that $G'$ is $\varepsilon$-far from $\mathcal{P}$ with respect to the uniform distribution, and hence a sample of some $s = s_{\mathcal{P}}(\varepsilon)$ vertices of $G'$, taken uniformly and independently, induces a graph which w.h.p. does not satisfy $\mathcal{P}$. Observe that this induced subgraph of $G'$ has (essentially) the same distribution as the graph $S$ on $[s]$ obtained by sampling vertices $u_1,\dots,u_s \in V(G)$ from $\mathcal{D}$ independently, and letting $\{i,j\} \in E(S)$ if and only if $\{u_i,u_j\} \in E(S)$ (this is precisely the graph defined in Theorem \ref{statement:LS}). We thus established Theorem \ref{statement:LS}, as promised \nolinebreak in \nolinebreak Subsection \nolinebreak \ref{subsec:LS}.

As noted in Subsection \ref{subsec:LS}, the graph $S$ defined above is a {\em blowup} of an induced subgraph of $G$, but is not necessarily a subgraph of $G$ in itself. This is because the sequence $u_1,\dots,u_s$ might contain repeated vertices. In other words, it may be the case that $G'$ contains ``forbidden subgraphs" which use several vertices from one of the blowup-sets, and thus do not correspond to ``forbidden subgraphs" in $G$. This creates an obstacle for proving Proposition \ref{lem:no_high_weight_vertices_tester}, because in order to prove this proposition we need to know that a (suitably chosen) random induced subgraph of $G$ (and not just the blowup thereof) does not satisfy $\mathcal{P}$ w.h.p. To avoid this obstacle, we use the assumption that all vertices in $G$ have relatively small weight, which guarantees that it is unlikely to sample more than once from some blowup-set (or in other words, that $S$ is isomorphic to $G[\{u_1,\dots,u_s\}]$.). We note that a different way of dealing with this obstacle is to restrict ourselves to properties for which we can guarantee, by appropriately choosing the graphs inside the blowup-sets, that there would not be any minimal forbidden subgraph which uses several vertices from one of the blowup-sets, see Subsection \nolinebreak \ref{subsec:reduction}.

\begin{proof}[Proof of Proposition \ref{lem:no_high_weight_vertices_tester}]
	We start by specifying the functions $t_{\mathcal{P}}$ and
	$c_{\mathcal{P}}$. 
	By the main result of \cite{AS_hereditary}, there is a function
	$q_{\mathcal{P}} : (0,1) \rightarrow \mathbb{N}$ such that for every $\varepsilon > 0$ and for every (unweighted) graph $G$ which is $\varepsilon$-far from $\mathcal{P}$, a sample of $q_{\mathcal{P}}(\varepsilon)$ vertices from $G$, taken {\em uniformly at random} and independently, induces a graph which does not satisfy $\mathcal{P}$ with probability at least $\frac{5}{6}$. Now set $t_{\mathcal{P}}(\varepsilon) := q_{\mathcal{P}}(\varepsilon)$ and
	$$
	c_{\mathcal{P}}(\varepsilon) := \frac{1}{3q_{\mathcal{P}}^2(\varepsilon)}.
	$$
	
	Our tester for $\mathcal{P}$ in the NHW model simply samples a sequence of $t = t_{\mathcal{P}}(\varepsilon)$ vertices of the input and accepts if and only if the subgraph induced by the sample satisfies $\mathcal{P}$. Evidently, this tester accepts with probability $1$ if the input satisfies $\mathcal{P}$. So to establish the correctness of our tester, it suffices to show that it rejects with probability at least $\frac{2}{3}$ if the input $(G,\mathcal{D})$ is $\varepsilon$-far from $\mathcal{P}$. 
	
	Let $\varepsilon > 0$ and let $(G,\mathcal{D})$ be a vertex-weighted graph on $n$ vertices which is $\varepsilon$-far from $\mathcal{P}$, and in which all vertices have weight at most $c$, where
	$c = c_{\mathcal{P}}(\varepsilon)$. Write $V(G) = \{v_1,\dots,v_n\}$ and fix an admissible $N$, that is, a positive integer $N$ such that $\mathcal{D}(v_i) \cdot N$ is an integer for every $1 \leq i \leq n$.
	Let $G'$ be an arbitrary $(\mathcal{D},N)$-blowup of $G$, and denote the blowup-sets by $V_1,\dots,V_n$.
	By Lemma \ref{lem:reduction}, $G'$ is $\varepsilon$-far from $\mathcal{P}$. This implies that a random sequence $u_1,\dots,u_q$ of $q = q_{\mathcal{P}}(\varepsilon)$ vertices of $G'$, sampled uniformly and independently, induces a graph which does not satisfy $\mathcal{P}$ with probability at least $\frac{5}{6}$.
	
	Let $\varphi : V(G') \rightarrow V(G)$ be the map which maps all elements of $V_i$ to $v_i$ (for every $1 \leq i \leq n$). Observe that for $u \in V(G')$ sampled uniformly, the random vertex $\varphi(u) \in V(G)$ has the distribution $\mathcal{D}$ (because $|V_i| = \mathcal{D}(v_i) \cdot N = \mathcal{D}(v_i) \cdot |V(G')|$).
	Furthermore, if a set $U \subseteq V(G')$ satisfies $|V_i \cap U| \leq 1$ for every $1 \leq i \leq n$, then $G[\varphi(U)]$ is isomorphic to $G'[U]$. Let $u_1,\dots,u_q$ be a random sequence of vertices of $G'$, sampled uniformly and independently, and set $U := \{u_1,\dots,u_q\}$. Recall that $G'[U]$ does not satisfy $\mathcal{P}$ with probability at least $\frac{5}{6}$. Furthermore, the probability that
	$|V_i \cap U| \geq 2$ for some $1 \leq i \leq n$ is at most
	$$
	\sum_{i = 1}^{n}{\binom{q}{2} \cdot \mathcal{D}^2(v_i)} \leq
	\frac{q^2}{2} \cdot c \cdot \sum_{i = 1}^{n}{\mathcal{D}(v_i)} =
	\frac{q^2}{2} \cdot c  = \frac{1}{6}.
	$$
	We conclude that with probability at least $\frac{2}{3}$, $G'[U]$ does not satisfy $\mathcal{P}$ and $|V_i \cap U| \leq 1$ for every $1 \leq i \leq n$, implying that $G[\varphi(U)]$ does not satisfy $\mathcal{P}$ either. This completes the proof.
\end{proof}

It is natural to ask whether the function $c_{\mathcal{P}}(\varepsilon)$ from Proposition \ref{lem:no_high_weight_vertices_tester} needs to depend on $\varepsilon$, namely whether the statement of Proposition \ref{lem:no_high_weight_vertices_tester} holds even if $c_{\mathcal{P}}$ is a constant function (depending only on $\mathcal{P}$). The proof of Proposition \ref{prop:constant_bound_on_num_of_vertices} shows, however, that this is not the case. In other words, allowing $c_{\mathcal{P}}(\varepsilon)$ to depend on $\varepsilon$ is unavoidable.
\subsection{Testing in the VDF Model vs. Testing in the Standard Model}\label{subsec:reduction}

It is natural to ask about the relation between the sample complexity for testing a property in the VDF model and the sample complexity for testing it in the standard model. More specifically, it will be interesting to resolve the following:

\begin{problem}\label{prob:reduction}
	Is it true that every extendable hereditary property $\mathcal{P}$ can be tested in the VDF model with the same (or close to the same) sample complexity as in the (standard) dense graph model?
\end{problem}

While at present we cannot answer this question, we can show that many natural properties $\mathcal{P}$ can be tested in the VDF model with (exactly) the same sample complexity as that of the (optimal) tester for $\mathcal{P}$ in the standard model, which works by sampling a certain number of vertices and accepting if and only if they induce a graph which satisfies $\mathcal{P}$. This is explained in the following paragraph.

As mentioned in Subsection \ref{subsec:no_high_weight_vertices}, the assumption made in Proposition \ref{lem:no_high_weight_vertices_tester} regarding the non-existence of high-weight vertices is needed in order to handle the possibility of having copies of some (forbidden) graph $F$ in $G'$ which do not correspond to copies of $F$ in $G$ (where $G'$ is some blowup of $G$). For some graph properties, however, such an assumption is not required, as we can make sure that every copy of a minimal forbidden graph in $G'$ will correspond to such a copy in $G$. To make this precise, we need the following definition.
A family of graphs $\mathcal{F}$ is said to be {\em blowup-avoidable} if for every graph $G$, say on $\{v_1,\dots,v_n\}$, and for every $n$-tuple of integers $b_1,\dots,b_n \geq 0$, there is a $(b_1,\dots,b_n)$-blowup $G'$ of $G$ with blowup-sets $V_1,\dots,V_n$, such that there is no induced copy of any $F \in \mathcal{F}$ in $G'$ which intersects some $V_i$ in at least $2$ vertices; in other words, for every $F \in \mathcal{F}$, every induced copy of $F$ in $G'$ corresponds to an induced copy of $F$ in $G$. We say that a hereditary property $\mathcal{P}$ is {\em blowup-avoidable} if the family of minimal forbidden induced subgraphs for $\mathcal{P}$ is blowup-avoidable.
We now prove the following proposition, which partially resolves Problem \ref{prob:reduction}. The proof is similar to that of Proposition \ref{lem:no_high_weight_vertices_tester}.
\begin{proposition}\label{thm:reduction}
	Let $\mathcal{P}$ be a hereditary graph property which is blowup-avoidable, and suppose that $\mathcal{P}$ admits a tester in the standard model, which works by sampling $q_{\mathcal{P}}(\varepsilon)$ vertices uniformly at random and independently, and accepting if and only if the subgraph induced by the sample satisfies $\mathcal{P}$. Then $\mathcal{P}$ is testable in the VDF model by a tester having one-sided error and sample complexity\footnote{Provided that the input distributions are only allowed to assign rational weights. If irrational weights are allowed, then the sample complexity (of the VDF tester for $\mathcal{P}$) should be slightly increased to (say) $q_{\mathcal{P}}(\varepsilon/2)$, see Footnote \ref{footnote:irrational_weights}.} \nolinebreak $q_{\mathcal{P}}(\varepsilon)$.
\end{proposition}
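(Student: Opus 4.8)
The plan is to reuse, almost verbatim, the tester and strategy from the proof of Lemma~\ref{lem:no_high_weight_vertices_tester}, with the blowup-avoidability of $\mathcal{P}$ playing the role that the ``no heavy-weights'' hypothesis played there. The VDF tester samples $q_{\mathcal{P}}(\varepsilon)$ vertices $u_1,\dots,u_q$ from $\mathcal{D}$ independently and accepts if and only if $G[\{u_1,\dots,u_q\}]$ satisfies $\mathcal{P}$. One-sided error is immediate from heredity: if $G\in\mathcal{P}$ then every induced subgraph of $G$ is in $\mathcal{P}$, so the tester always accepts.

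For soundness, suppose $(G,\mathcal{D})$ is $\varepsilon$-far from $\mathcal{P}$; write $V(G)=\{v_1,\dots,v_n\}$, fix a suitable $N$, and let $\mathcal{F}$ be the family of minimal forbidden induced subgraphs of $\mathcal{P}$. Using blowup-avoidability, I would choose a $(\mathcal{D},N)$-blowup $G'$ of $G$, with blowup-sets $V_1,\dots,V_n$, in which no induced copy of any $F\in\mathcal{F}$ meets any $V_i$ in two or more vertices. By Lemma~\ref{lem:reduction}, $G'$ is $\varepsilon$-far from $\mathcal{P}$ with respect to the uniform distribution, so by the assumed standard-model tester a uniform independent sample $U=\{u_1,\dots,u_q\}$ of $q=q_{\mathcal{P}}(\varepsilon)$ vertices of $G'$ satisfies $G'[U]\notin\mathcal{P}$ with probability at least $\frac{2}{3}$; note that, unlike in Lemma~\ref{lem:no_high_weight_vertices_tester}, no extra $O(1)$ amplification is needed since the transfer step below loses nothing.

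To finish, let $\varphi:V(G')\to V(G)$ collapse each $V_i$ to $v_i$. Since $|V_i|=\mathcal{D}(v_i)\cdot N=\mathcal{D}(v_i)\cdot|V(G')|$, the vertex $\varphi(u)$ for a uniformly random $u\in V(G')$ is $\mathcal{D}$-distributed, so $G[\varphi(U)]$ has exactly the distribution of the graph inspected by the VDF tester. Now if $G'[U]\notin\mathcal{P}$, then the finite graph $G'[U]$ contains an induced copy of some $F\in\mathcal{F}$ on a set $W\subseteq U$; this is also an induced copy of $F$ in $G'$, so by the choice of $G'$ we have $|W\cap V_i|\le1$ for all $i$. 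Hence $\varphi|_W$ is injective, and since the bipartite graph between $V_i$ and $V_j$ in $G'$ is complete or empty exactly as $\{v_i,v_j\}$ is an edge or non-edge of $G$, the map $\varphi|_W$ is a graph isomorphism from $G'[W]\cong F$ onto $G[\varphi(W)]$. Thus $G[\varphi(U)]$ contains an induced copy of $F\in\mathcal{F}$ and fails $\mathcal{P}$, so the VDF tester rejects with probability at least $\frac{2}{3}$.

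I do not expect a genuine obstacle: the whole argument is a routine chaining of Lemma~\ref{lem:reduction}, the definition of blowup-avoidability, and the hypothesis on the standard-model tester, with the proof of Lemma~\ref{lem:no_high_weight_vertices_tester} serving as a template. The only steps needing care are in the last paragraph --- checking that a forbidden copy ``spread out'' over the blowup-sets descends to a forbidden copy in $G$ (precisely what blowup-avoidability is designed to give), and that projecting a uniform sample of $G'$ yields a genuine $\mathcal{D}$-sample of $G$. The rational-weights caveat is handled exactly as in Footnote~\ref{footnote:irrational_weights}: use rounded $(\mathcal{D},N)$-blowups with $N\to\infty$ and pass from $\varepsilon$ to $\varepsilon/2$.
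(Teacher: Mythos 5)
Your proposal is correct and follows essentially the same route as the paper: sample from $\mathcal{D}$, accept iff the induced subgraph is in $\mathcal{P}$, use Lemma~\ref{lem:reduction} to show a blowup-avoiding $(\mathcal{D},N)$-blowup $G'$ is $\varepsilon$-far, invoke the standard tester on $G'$, and pull the forbidden copy back via the collapse map $\varphi$. Your observation that no amplification is needed (because the transfer is lossless, unlike in Lemma~\ref{lem:no_high_weight_vertices_tester}) matches the paper's proof as well.
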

\begin{proof}
	Given an input $(G,\mathcal{D})$,
	the required VDF tester for $\mathcal{P}$ samples (from $\mathcal{D}$) a sequence of $q_{\mathcal{P}}(\varepsilon)$ vertices, and accepts if and only if the subgraph induced by the sample satisfies $\mathcal{P}$. Since $\mathcal{P}$ is hereditary, this tester accepts with probability $1$ if the input graph satisfies $\mathcal{P}$. So it remains to show that if the input $(G,\mathcal{D})$ is $\varepsilon$-far from $\mathcal{P}$, then with probability at least $\frac{2}{3}$, a sequence of $q_{\mathcal{P}}(\varepsilon)$ vertices of $G$, sampled according to $\mathcal{D}$ and independently, induces a graph which does not satisfy $\mathcal{P}$.
	
	Let $\mathcal{F} = \mathcal{F}(\mathcal{P})$ be the family of minimal forbidden induced subgraphs for $\mathcal{P}$.
	Let $(G,\mathcal{D})$ be a vertex-weighted graph on $n$ vertices, which is $\varepsilon$-far from $\mathcal{P}$. Write $V(G) = \{v_1,\dots,v_n\}$ and fix an admissible $N$, that is, a positive integer $N$ such that $\mathcal{D}(v_i) \cdot N$ is an integer for every $1 \leq i \leq n$. As $\mathcal{P}$ is blowup-avoidable, there is a
	$(\mathcal{D},N)$-blowup $G'$ of $G$ with blowup-sets $V_1,\dots,V_n$, such that there is no induced copy of any $F \in \mathcal{F}$ in $G'$ which intersects some $V_i$ in at least $2$ vertices. By Lemma \ref{lem:reduction}, $G'$ is $\varepsilon$-far from $\mathcal{P}$. So by our choice of $q_{\mathcal{P}}(\varepsilon)$, with probability at least $\frac{2}{3}$ it holds that a sequence of $q_{\mathcal{P}}(\varepsilon)$ vertices of $G'$, sampled uniformly and independently, induces a graph which does not satisfy $\mathcal{P}$, and hence contains an induced copy of some $F \in \mathcal{F}$.
	
	Let $\varphi : V(G') \rightarrow V(G)$ be the map which maps all elements of $V_i$ to $v_i$ (for every $1 \leq i \leq n$). Observe that for $u \in V(G')$ sampled uniformly, the random vertex $\varphi(u) \in V(G)$ has the distribution $\mathcal{D}$. 
	Note that by our choice of $G'$, if $u_1,\dots,u_r \in V(G')$ span an induced copy of some $F \in \mathcal{F}$ (in the graph $G'$), then $\varphi|_{\{u_1,\dots,u_r\}}$ is injective (and hence an isomorphism), which implies that $\varphi(u_1),\dots,\varphi(u_r)$ span an induced copy of $F$ in $G$.  
	It is now easy to see that a sequence of $q_{\mathcal{P}}(\varepsilon)$ vertices of $G$, sampled from $\mathcal{D}$ and independently, does not satisfy $\mathcal{P}$ with probability at least $\frac{2}{3}$, as required.
\end{proof}

To demonstrate the usefulness of Proposition \ref{thm:reduction}, observe that induced $F$-freeness is blowup-avoidable for every $F \in \{P_2,P_3,C_4\}$ (here $P_k$ is the path with $k$ edges). Indeed, this is established by taking the blowup-sets (in the definition of blowup-avoidability) to be cliques. By combining Proposition \ref{thm:reduction} with known results for the standard model \cite{AS_hereditary,AF,GS_C4}, we immediately get that induced $F$-freeness is testable in the VDF model with sample complexity $\poly(1/\varepsilon)$ if $F \in \{P_2,P_3\}$, and with sample complexity at most $2^{\poly(1/\varepsilon)}$ if \nolinebreak $F = C_4$.

We now describe another corollary of Proposition \ref{thm:reduction}.
We say that a graph property $\mathcal{P}$ is {\em closed under blowups} if for every graph $G$ satisfying $\mathcal{P}$, every blowup of $G$ in which the blowup-sets are independent sets also satisfies $\mathcal{P}$.
We claim that if a hereditary property $\mathcal{P}$ is closed under blowups then it is also blowup-avoidable. To see this, let $\mathcal{F}$ be the set of minimal forbidden induced subgraphs for $\mathcal{P}$, let $G$ be an $n$-vertex graph, let $b_1,\dots,b_n \geq 0$ be integers and let $G'$ be the $(b_1,\dots,b_n)$-blowup of $G$ in which the blowup-sets, $V_1,\dots,V_n$, are independent.
Let $F \in \mathcal{F}$ and suppose that $G'$ contains an induced copy of $F$.
If, by contradiction, this copy intersects some $V_i$ in more than one vertex, then $F$ is a blowup of some graph $F'$ with $|V(F')| < |V(F)|$, where the blowup-sets are independent sets. Since $\mathcal{P}$ is closed under blowups and $F \notin \mathcal{P}$, we must have $F' \notin \mathcal{P}$; but this contradicts the fact that $F$ is a minimal forbidden induced subgraph for $\mathcal{P}$.

So we see that the conclusion of Proposition \ref{thm:reduction} applies to hereditary properties which are closed under blowups.
Some examples of such properties include $K_t$-freeness;
the property of having a homomorphism into a fixed graph $H$ (and in particular the property of being $k$-colorable); and the property of being the blowup of a fixed graph $H$ (cf. \cite{AG}).

On the negative side, there are many natural hereditary properties which are extendable but not blowup-avoidable, such as the property of being $H$-free for a graph $H$ which is neither a clique nor contains isolated vertices. It would be interesting to resolve Problem \ref{prob:reduction} for these properties.

\subsection{Which Properties are Testable in the Variations of the VDF Model?}\label{subsec:restriced_models_open_problems}

It may be interesting to characterize the graph properties which are testable in each of the variations of the VDF model (defined at the beginning of Section \ref{sec:restricted_models}).
\begin{problem}\label{prob:no_low_weight_vertices}
	Which graph properties are testable in the ``large inputs"/``size-aware"/NHW/NLW model?
\end{problem}
While at the moment we are unable to resolve Problem \ref{prob:no_low_weight_vertices}, we can rule out some initial guesses. A first guess might be that only hereditary properties are testable in these models. This, however, turns out to be false; for example, connectivity and hamiltonicity are testable
in each of these models, as implied by the following \nolinebreak proposition.
\begin{proposition}\label{prop:trivial_properties}
	Let $\mathcal{P}$ be a property such that for every $\varepsilon > 0$ there is $M(\varepsilon)$ so that every vertex-weighted graph on at least $M(\varepsilon)$ vertices is $\varepsilon$-close to $\mathcal{P}$. Then $\mathcal{P}$ is testable in all four variations of the VDF \nolinebreak model.
\end{proposition}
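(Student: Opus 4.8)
The plan is to handle the four variants separately, all resting on one elementary consequence of the hypothesis: if $(G,\mathcal{D})$ is $\varepsilon$-far from $\mathcal{P}$ then necessarily $|V(G)| < M(\varepsilon)$ (equivalently, every vertex-weighted graph on at least $M(\varepsilon)$ vertices is automatically $\varepsilon$-close to $\mathcal{P}$, no matter the distribution). For the ``large inputs'' model this settles it at once: set $M_{\mathcal{P}}(\varepsilon) := M(\varepsilon)$ and let the tester accept unconditionally, making zero queries --- on inputs with $|V(G)| \ge M_{\mathcal{P}}(\varepsilon)$ there is no $\varepsilon$-far instance, so accepting is correct. For the NHW model the same unconditional-accept tester works with $c_{\mathcal{P}}(\varepsilon) := 1/M(\varepsilon)$, since $\max_v \mathcal{D}(v) \le c_{\mathcal{P}}(\varepsilon)$ forces $1 = \sum_v \mathcal{D}(v) \le |V(G)| \cdot c_{\mathcal{P}}(\varepsilon)$, hence $|V(G)| \ge M(\varepsilon)$, so again there is no $\varepsilon$-far instance to reject.

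For the ``size-aware'' model I would mimic the ``small graph'' branch of the proof of Theorem \ref{theo_rest4}. Given $\varepsilon$, $(G,\mathcal{D})$ and $n := |V(G)|$: if $n \ge M(\varepsilon)$, accept (the input is $\varepsilon$-close); otherwise sample $t := \lceil M(\varepsilon)\log(3M(\varepsilon))/\varepsilon \rceil$ vertices, let $U$ be the sampled set, and accept iff some $n$-vertex graph satisfying $\mathcal{P}$ contains $G[U]$ as an induced subgraph. If $G \in \mathcal{P}$ this condition holds with $G$ itself as the witness, so we accept with probability $1$. If $(G,\mathcal{D})$ is $\varepsilon$-far then $n < M(\varepsilon)$, and a union bound shows that with probability at least $\tfrac{2}{3}$ every vertex outside $U$ has weight below $\varepsilon/n$, so $\mathcal{D}(V(G)\setminus U) < \varepsilon$; conditioned on this, an accepting answer would let us transform $G$ into a graph in $\mathcal{P}$ by editing only edges incident to $V(G)\setminus U$ --- total weight less than $\varepsilon$ --- contradicting $\varepsilon$-farness, so the tester rejects with probability at least $\tfrac{2}{3}$.

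The NLW model is the only one needing a real argument. Here the tester knows neither $n$ nor a bound on the weights, only that $\min_v \mathcal{D}(v) \ge \delta/n$. I would have it sample $t := \lceil 2M(\varepsilon)^2\log(6M(\varepsilon))/\delta \rceil$ vertices, form the set $U$ of distinct sampled vertices, accept if $|U| \ge M(\varepsilon)$, and otherwise accept iff $G[U]$ satisfies $\mathcal{P}$. Correctness rests on two estimates, both obtained by the block-sampling union bound used in Lemma \ref{lem:no_small_weight_vertices_tester}: (i) if $n \ge M(\varepsilon)$ then with probability at least $\tfrac{5}{6}$ the sample meets at least $M(\varepsilon)$ distinct vertices --- partition $V(G)$ into $M(\varepsilon)$ parts each of weight at least $\delta/(2M(\varepsilon))$, split the $t$ samples into $M(\varepsilon)$ equal blocks, and note each block misses its part with probability at most $e^{-\delta t/(2M(\varepsilon)^2)} \le 1/(6M(\varepsilon))$; and (ii) if $n < M(\varepsilon)$ then with probability at least $\tfrac{5}{6}$ one has $U = V(G)$, since every vertex has weight greater than $\delta/M(\varepsilon)$ and $t$ is large. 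Given (i) and (ii): if $G \in \mathcal{P}$ and $n \ge M(\varepsilon)$ we accept (whp) because $|U| \ge M(\varepsilon)$; if $n < M(\varepsilon)$ then (whp) $U = V(G)$, so we fall into the second branch and accept iff $G = G[U] \in \mathcal{P}$; and an $\varepsilon$-far input has $n < M(\varepsilon)$ and $G \notin \mathcal{P}$, so (whp) we reject.

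I expect the NLW analysis to be the only genuine obstacle. The delicate point is that a single sample size must serve two purposes at once: when $n$ is large it must reliably yield $|U| \ge M(\varepsilon)$, so that the safe ``accept'' branch is reached rather than the $G[U]$-test (which could wrongly reject a graph in $\mathcal{P}$), and when $n$ is small it must reveal all of $V(G)$ so that the exact graph $G$ is tested. Taking $t$ of order $M(\varepsilon)^2\log M(\varepsilon)/\delta$ makes both the block-hitting estimate (i) and the coverage estimate (ii) go through, after which the case analysis above is routine; the remaining three models follow immediately from the observation that having at least $M(\varepsilon)$ vertices already forces $\varepsilon$-closeness.
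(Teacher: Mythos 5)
Your proof is correct and follows the paper's route for all four models: unconditional accept for ``large inputs'' and NHW, the $|V(G)|$-case-split tester from Theorem \ref{theo_rest4} for ``size-aware'', and the $|U| \ge M$ vs.\ $G[U]$-test tester for NLW. The one genuine difference, and a useful one, is your explicit treatment of the NLW case $n \ge M(\varepsilon)$ with $G \in \mathcal{P}$: you observe that the tester could reach the $G[U]$-branch and wrongly reject (since $G\in\mathcal{P}$ need not imply $G[U]\in\mathcal{P}$ for non-hereditary $\mathcal{P}$), and you rule this out by showing via block sampling that $|U|\ge M(\varepsilon)$ holds whp, landing in the safe accept branch. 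The paper's writeup only remarks that $n\ge M(\varepsilon)$ forces $\varepsilon$-closeness, which handles the rejection side but leaves the acceptance side implicit. The cost of your block-sampling union bound is a sample size of $O(M^2\log M/\delta)$ rather than the paper's stated $O(M\log M/\delta)$; the sharper bound is recoverable by a sequential coupon-collector argument (while $|U|<M$ and $n\ge M$, each fresh sample escapes $U$ with probability at least $(n-M+1)\delta/n \ge \delta/M$, so $O(M\log M/\delta)$ samples suffice), but either bound is enough for the proposition.
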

\begin{proof}
	The fact that $\mathcal{P}$ is testable in the ``large inputs" (resp. NHW) model is trivial; indeed, by choosing $M_{\mathcal{P}}(\varepsilon) := M(\varepsilon)$ (resp. $c_{\mathcal{P}}(\varepsilon) := 1/M(\varepsilon)$) we can make sure that every input graph will be $\varepsilon$-close to $\mathcal{P}$, so a tester that simply accepts without making any queries is a valid tester for $\mathcal{P}$. 
	
	Let us now consider the NLW model.
	Given $\varepsilon,\delta > 0$ and an input graph $(G,\mathcal{D})$ with all vertex-weights at least $\frac{\delta}{|V(G)|}$, our tester for $\mathcal{P}$ works as follows: setting $M := M(\varepsilon)$, the tester samples
	$C M\log(M)/\delta $ vertices according to $\mathcal{D}$ and independently (where $C$ is some large constant); if the number of distinct vertices in the sample is at least $M$ then the tester accepts (without making any queries), and otherwise the tester accepts if and only if the subgraph induced by the sample satisfies $\mathcal{P}$. To see that this is a valid tester, observe that if $G$ has less than $M$ vertices then \whp\, the tester samples all the vertices, and if $G$ has at least $M$ vertices then \whp\, there are at least $M$ distinct vertices in the sample. This can be argued similarly as in the proof of Proposition \ref{lem:no_small_weight_vertices_tester}, using that all vertices have weight at least $\frac{\delta}{|V(G)|}$; we omit the details. 
	
	Finally, let us prove that $\mathcal{P}$ is testable in the ``size-aware" model. On input $\varepsilon > 0$ and $(G,\mathcal{D})$, our tester for $\mathcal{P}$ (in the ``size-aware" model) does the following: if $|V(G)| \geq M(\varepsilon)$ then the tester accepts without making any queries, and if $|V(G)| < M(\varepsilon)$ then the tester samples $t := M\log(3M)/\varepsilon$ vertices $u_1,\dots,u_t \in V(G)$ according to the distribution $\mathcal{D}$ and independently, where $M = M(\varepsilon)$, and accepts if and only if there is a graph on $|V(G)|$ vertices which satisfies $\mathcal{P}$ and contains $G[\{u_1,\dots,u_t\}]$ as an induced subgraph. The proof of correctness for this tester is similar to the proof of Theorem \ref{theo_rest4}, and we leave the details to the reader. 
\end{proof}
In order to apply Proposition \ref{prop:trivial_properties} to the properties of connectivity and hamiltonicity, we observe that any vertex-weighted graph $(G,\mathcal{D})$ with
$|V(G)| \geq 1/\varepsilon$ is $\varepsilon$-close to being hamiltonian (and hence also connected). 
To see that this holds, take a random (cyclic) ordering $v_1,\dots,v_n$ of the vertices of $G$, and observe that for every pair of distinct $u,w \in V(G)$, the probability that there is $1 \leq i \leq n$ such that $\{u,w\} = \{v_i,v_{i+1}\}$ is $n/\binom{n}{2} = \frac{2}{n-1}$. This implies that the expected value of $\sum_{i=1}^{n}{\mathcal{D}(v_i)\mathcal{D}(v_{i+1})}$ is 
$\frac{2}{n-1} \cdot \sum_{u,w \in V(G)}{\mathcal{D}(u)\mathcal{D}(w)} = 
\frac{2}{n-1} \cdot \frac{1}{2} \cdot \left( 1 - \sum_{v \in V(G)}{\mathcal{D}(v)^2} \right) \leq 
\frac{1}{n-1} \cdot \left( 1 - \frac{1}{n} \right) = \frac{1}{n}$,
where the last inequality follows from Cauchy-Schwarz
(and the first sum is over {\em unordered} pairs $\{u,w\}$). 
This means that we can create a hamilton cycle by adding edges of total weight at most $\frac{1}{n} \leq \varepsilon$.  
Let us also note that for connectivity there is a simpler argument: if $(G,\mathcal{D})$ is a vertex-weighted graph with $|V(G)| \geq 1/\varepsilon$, then there is $v \in V(G)$ with $\mathcal{D}(v) \leq \varepsilon$, and we can make $G$ connected by connecting $v$ to all other vertices.

Note that in some of the restricted models (e.g. the NLW model), the tester given by (the proof of) Proposition \ref{prop:trivial_properties} has 2-sided error. It is also not hard to see that the NLW model admits no 1-sided-error tester for, e.g., connectivity. This shows that (some of) the restricted models allow for properties which are testable with 2-sided error but not with 1-sided error (unlike the ``ordinary" VDF model, where we know that every testable property can be tested with $1$-sided error, as follows from Theorems \ref{thm:main} and \ref{statement:ours}; see also \nolinebreak \cite[\nolinebreak Theorem \nolinebreak 2.3]{Goldreich_VDF}).

Another natural guess regarding the answer to Problem \ref{prob:no_low_weight_vertices} would be that every property which is testable in the standard model is also testable in the restricted models (see \cite{AFNS} for a characterization of the properties testable in the standard model). This guess is ruled out by the following proposition, which describes a property which is testable in the standard model but not in the restricted models.
\begin{proposition}\label{prop:impossibility_restricted_VDF}
	The property $\mathcal{P}$ of having edge-density\footnote{The edge-density of a (possibly vertex-weighted) graph $G$ is defined as $2e(G)/|V(G)|^2$; in other words, the density is defined with respect to the uniform distribution on $V(G)$, and {\em not} with respect to the given distribution $\mathcal{D}$.} at most $\frac{1}{4}$ is not testable in either of the four variants of the VDF model.
\end{proposition}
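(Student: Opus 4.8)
I will prove non-testability in all four models at once, by constructing, for the fixed values $\varepsilon:=\tfrac15$ and $\delta:=\tfrac1{10}$ and for each large $n$, a pair of $n$-vertex vertex-weighted graphs $(G_n,\mathcal{D}_n)$ and $(G'_n,\mathcal{D}'_n)$ such that $G_n\in\mathcal{P}$, the pair $(G'_n,\mathcal{D}'_n)$ is $\varepsilon$-far from $\mathcal{P}$, every vertex-weight in both graphs lies in $[\delta/n,\,2/n]$, and the statistical distance between the two experiments ``sample $q$ vertices i.i.d.\ from the given distribution and inspect the induced subgraph'' is $O(q^2/n)$. Granting this, fix any of the four models: a tester for $\mathcal{P}$ there makes, once $\varepsilon$ (and $\delta$, in the NLW model) is fixed, some constant number $q$ of queries, and by the reduction recalled before Proposition~\ref{prop:hard_extendable} we may assume it only queries pairs of sampled vertices, so its output is a function of the sampled vertices together with the induced subgraph on them. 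Choosing $n$ large enough (and, in the ``large inputs'' model, $n\ge M_{\mathcal{P}}(\varepsilon)$; in the NHW model, $n$ with $2/n<c_{\mathcal{P}}(\varepsilon)$) makes the two experiments $o(1)$-close, so the tester accepts both or rejects both with error probability $>1/3$, contradicting correctness; in the ``size-aware'' model both graphs have exactly $n$ vertices, so knowing $n$ is of no help.

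The construction is as follows. Let $G_n$ have vertex set $A\sqcup B$ with $|A|=|B|=n/2$, where $A$ induces a clique, $B$ induces an independent set, and there is no edge between $A$ and $B$; let $\mathcal{D}_n$ be uniform on $A$, rescaled so that $\mathcal{D}_n(A)=1-\tfrac\delta2$ (hence each vertex of $A$ has weight $(2-\delta)/n$ and each vertex of $B$ has weight $\delta/n$). Let $G'_n$ have vertex set $A'\sqcup B'$ with $|A'|=3n/4$ and $|B'|=n/4$, where again $A'$ induces a clique, $B'$ an independent set, with no edge between them; let $\mathcal{D}'_n$ be uniform on $A'$, rescaled so that $\mathcal{D}'_n(A')=1-\tfrac\delta2$ (hence $A'$-vertices have weight $\tfrac{4(1-\delta/2)}{3n}$ and $B'$-vertices have weight $\tfrac{2\delta}{n}$). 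The crucial feature is that $\mathcal{D}_n(A)=\mathcal{D}'_n(A')=1-\tfrac\delta2$ and $\mathcal{D}_n(B)=\mathcal{D}'_n(B')=\tfrac\delta2$.

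Next I would verify the three claimed properties. (i)~$G_n\in\mathcal{P}$: its edge-density equals $2\binom{n/2}{2}/n^2=\tfrac14-\tfrac1{2n}<\tfrac14$. (ii)~$(G'_n,\mathcal{D}'_n)$ is $\varepsilon$-far from $\mathcal{P}$: every graph in $\mathcal{P}$ on $n$ vertices has at most $\lfloor n^2/8\rfloor$ edges, and all edges of $G'_n$ lie inside $A'$ and carry the same weight $\big(\tfrac{4(1-\delta/2)}{3n}\big)^2$, so turning $G'_n$ into a member of $\mathcal{P}$ — which, since adding edges is never helpful, entails deleting at least $\binom{3n/4}{2}-\lfloor n^2/8\rfloor$ of these edges — costs at least $\big(\binom{3n/4}{2}-\lfloor n^2/8\rfloor\big)\big(\tfrac{4(1-\delta/2)}{3n}\big)^2=\tfrac{5}{18}\big(1-\tfrac\delta2\big)^2-O(1/n)$, which for $\delta=\tfrac1{10}$ and $n$ large exceeds $\tfrac15=\varepsilon$. (iii)~The weight bounds follow from a direct check: every weight in $G_n$ and $G'_n$ lies in $[\delta/n,\,2/n]$.

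Finally, the indistinguishability. Conditional on the $q$ sampled vertices being pairwise distinct — an event of probability $1-O(q^2/n)$ in \emph{both} experiments, since all weights are $O(1/n)$ — the induced subgraph is a clique on the samples that landed in the ``clique part'' together with an independent set on those that landed in the ``isolated part'' and no edges between the two groups; its isomorphism type depends only on the number $k$ of samples in the clique part, and $k\sim\mathrm{Bin}(q,1-\tfrac\delta2)$ in both experiments because $\mathcal{D}_n(A)=\mathcal{D}'_n(A')$. Hence the two experiments are $O(q^2/n)$-close in total variation, which finishes the argument. The step I expect to be the real obstacle is the design of the ``far'' instance: any $(G',\mathcal{D}')$ that is $\Omega(1)$-far from $\mathcal{P}$ must keep a constant fraction of its weight on a region where the graph is dense (density above $\tfrac14$), and the point is to split that mass against the mass on a sparse region so that exactly the same ``clique-versus-isolated'' mixture is produced as by a genuine member of $\mathcal{P}$ — which forces the clique in the $\mathcal{P}$-instance to occupy at most half the vertices, precisely the regime where the edge-density stays $\le\tfrac14$.
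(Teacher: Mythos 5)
Your proof is correct and follows essentially the same approach as the paper's: the same two instances (a clique on $n/2$ vertices versus a clique on $3n/4$ vertices, each padded with isolated vertices, with distributions tuned so that the total mass on the clique part is identical in both graphs), and the same indistinguishability argument (with probability $1-O(q^2/n)$ the tester's view is a clique on a $\mathrm{Bin}(q,p)$-distributed subset of the sample plus isolated vertices, with the same $p$ in both experiments). The only cosmetic differences are your choice of mass split ($1-\delta/2$ on the clique rather than the paper's $1/2$, coming from using a uniform $\mathcal{D}_1$) and the explicit constants you supply for the farness and weight bounds.
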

\begin{proof}
	Let $G_1$ be the $n$-vertex graph consisting of a clique of size $\frac{n}{2}$ and $\frac{n}{2}$ isolated vertices, and let $\mathcal{D}_1$ be the uniform distribution on $V(G_1)$. Let $G_2$ be the $n$-vertex graph consisting of a clique $X$ of size $\frac{3n}{4}$ and $\frac{n}{4}$ isolated vertices, and let $\mathcal{D}_2$ be the distribution on $V(G_2)$ that assigns weight $\frac{2}{3n}$ to every vertex of $X$, and weight $\frac{2}{n}$ to every vertex of $V(G_2) \setminus X$.
	Note that $(G_1,\mathcal{D}_1)$ and $(G_2,\mathcal{D}_2)$ are valid inputs in each of the variants of the VDF model (provided that $n$ is large enough), and that $G_1$ satisfies $\mathcal{P}$ while $(G_2,\mathcal{D}_2)$ is $\Omega(1)$-far from $\mathcal{P}$.
	On the other hand, we now show that for every $q$, a sample of $q$ vertices from $(G_1,\mathcal{D}_1)$ is indistinguishable from a sample of $q$ vertices from $(G_2,\mathcal{D}_2)$ (provided that $n$ is large enough with respect to $q$). To this end, let $U_i$ be a set of $q$ random vertices of $G_i$ sampled according to $\mathcal{D}_i$ and independently (for $i = 1,2$).
	Then for both $i=1,2$, the graph $G_i[U_i]$ consists of a clique and some isolated vertices.
	Letting $X_i$ be the clique in $G_i[U_i]$, we have
	\begin{align*}
	\mathbb{P}[|X_1| = k] &=
	o(1) +
	\binom{q}{k} \cdot \prod_{i=0}^{k-1}{\left( \frac{n}{2} - i \right)} \cdot
	\prod_{i=0}^{q-k-1}{\left( \frac{n}{2} - i \right)} \cdot
	\left( \frac{1}{n} \right)^{q}
	\\ &=
	(1 + o(1))\binom{q}{k} \cdot \left( \frac{1}{2} \right)^q
	\end{align*}
	and
	\begin{align*}
	\mathbb{P}[|X_2| = k] &=
	o(1) +
	\binom{q}{k} \cdot \prod_{i=0}^{k-1}{\left( \frac{3n}{4} - i \right)} \cdot
	\prod_{i=0}^{q-k-1} {\left( \frac{n}{4} - i \right)} \cdot
	\left( \frac{2}{3n} \right)^{k} \cdot \left( \frac{2}{n} \right)^{q-k}
	\\ &=
	(1 + o(1))\binom{q}{k} \cdot \left( \frac{1}{2} \right)^q,
	\end{align*}
	where in both cases, the additive term $o(1)$ accounts for the event that some vertex has been sampled more than once.
	So we see that
	$\left| \mathbb{P}[|X_1| = k] - \mathbb{P}[|X_2| = k] \right| = o(1)$. This implies that the total variation distance between the distribution of $G_1[U_1]$ and the distribution of $G_2[U_2]$ is $o(1)$. It follows that $\mathcal{P}$ is not testable in any of the four variants of the VDF model (note that knowing $n$ does not help to distinguish between $(G_1,\mathcal{D}_1)$ and $(G_2,\mathcal{D}_2)$, since these graphs have the same number of vertices). 
\end{proof}

The proof of Proposition \ref{prop:impossibility_restricted_VDF} can be adapted to show that other properties are also not testable in either of the variants of the VDF model. These properties include the property of having a cut with at least $\alpha n^2$ edges (for $0 < \alpha < \frac{1}{4}$) and the property of containing a clique with at least $\alpha n$ vertices (for $0 < \alpha < 1$).

\paragraph{Acknowledgements} We are grateful to an anonymous referee for spotting a gap in the proof of Theorem \ref{thm:main} in a preliminary version of the paper.

\section{Proof of Lemmas \ref{lem:reg} and \ref{lem:strong_reg}}\label{sec:appendix}
Here we prove lemmas \ref{lem:reg} and \ref{lem:strong_reg}. We start by extending some basic results about regular partitions to the vertex-weighted setting.
\begin{lemma}\label{lem:density_mean_variance}
	Let $X,Y$ be disjoint vertex-sets in a vertex-weighted graph $(G,\mathcal{D})$, and let $\mathcal{P}_X,\mathcal{P}_Y$ be partitions of $X,Y$, respectively. Then
	$$
	\sum_{X' \in \mathcal{P}_X, Y' \in \mathcal{P}_Y}
	{\mathcal{D}(X')\mathcal{D}(Y') \cdot d(X',Y')}
	= \mathcal{D}(X)\mathcal{D}(Y) \cdot d(X,Y),
	$$
	and
	\begin{align*}
	\sum_{X' \in \mathcal{P}_X, Y' \in \mathcal{P}_Y}&
	{\mathcal{D}(X')\mathcal{D}(Y') \cdot d^2(X',Y')}
	= \\&
	\mathcal{D}(X)\mathcal{D}(Y) \cdot d^2(X,Y) +
	\sum_{X' \in \mathcal{P}_X, Y' \in \mathcal{P}_Y}{\mathcal{D}(X')\mathcal{D}(Y') \cdot \left(d(X',Y') - d(X,Y)\right)^2}.
	\end{align*}
\end{lemma}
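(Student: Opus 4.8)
The plan is to establish the two displayed identities in order, obtaining the second from the first via the standard bias--variance algebraic identity. Before starting I would dispose of the zero-weight bookkeeping once and for all: a part $X' \in \mathcal{P}_X$ with $\mathcal{D}(X') = 0$ consists entirely of vertices of weight $0$, and by the convention $d(X',Y') = 0$ in that case, so such parts contribute $0$ to every sum occurring in the statement (and likewise if $\mathcal{D}(Y') = 0$, or if $\mathcal{D}(X) = 0$ or $\mathcal{D}(Y) = 0$, in which case all quantities in the statement are $0$). Hence one may assume all parts, as well as $X$ and $Y$, have positive weight.

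For the first identity, the key observation is that for every pair of parts $X' \in \mathcal{P}_X$, $Y' \in \mathcal{P}_Y$ we have $\mathcal{D}(X')\mathcal{D}(Y') \cdot d(X',Y') = \sum_{(x,y) \in E(X',Y')} \mathcal{D}(x)\mathcal{D}(y)$ — this is just the definition of $d(X',Y')$ (and both sides are $0$ in the degenerate case noted above). Since the edge sets $E(X',Y')$, as $X'$ ranges over $\mathcal{P}_X$ and $Y'$ over $\mathcal{P}_Y$, partition $E(X,Y)$, summing this equality over all such pairs gives $\sum_{X',Y'} \mathcal{D}(X')\mathcal{D}(Y') d(X',Y') = \sum_{(x,y) \in E(X,Y)} \mathcal{D}(x)\mathcal{D}(y) = \mathcal{D}(X)\mathcal{D}(Y) d(X,Y)$, as desired.

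For the second identity I would first record the auxiliary fact $\sum_{X' \in \mathcal{P}_X, Y' \in \mathcal{P}_Y} \mathcal{D}(X')\mathcal{D}(Y') = \big(\sum_{X' \in \mathcal{P}_X} \mathcal{D}(X')\big)\big(\sum_{Y' \in \mathcal{P}_Y} \mathcal{D}(Y')\big) = \mathcal{D}(X)\mathcal{D}(Y)$. Writing $w_{X',Y'} := \mathcal{D}(X')\mathcal{D}(Y')$ and $\bar d := d(X,Y)$, I would expand $\sum_{X',Y'} w_{X',Y'}\big(d(X',Y') - \bar d\big)^2 = \sum_{X',Y'} w_{X',Y'} d^2(X',Y') - 2\bar d \sum_{X',Y'} w_{X',Y'} d(X',Y') + \bar d^{\,2} \sum_{X',Y'} w_{X',Y'}$, then substitute the first identity into the middle sum and the auxiliary fact into the last sum to obtain $\sum_{X',Y'} w_{X',Y'} d^2(X',Y') - \mathcal{D}(X)\mathcal{D}(Y)\bar d^{\,2}$. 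Rearranging this equation gives precisely the claimed formula.

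The whole argument is routine; there is no real obstacle. If anything, the only point requiring a moment's care is the handling of zero-weight parts against the convention $d(\cdot,\cdot) = 0$, which is why I would address it up front so that the subsequent manipulations can be carried out without case distinctions.
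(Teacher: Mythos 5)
Your proposal is correct and follows essentially the same route as the paper: the first identity is obtained by unwinding the definition of density and observing that the rectangles $E(X',Y')$ partition $E(X,Y)$, and the second follows by the standard bias--variance expansion together with the first identity (you expand $(d(X',Y')-\bar d)^2$ whereas the paper expands $(\bar d+\varepsilon(X',Y'))^2$, but this is the same computation). Your explicit handling of zero-weight parts is a small, harmless addition that the paper leaves implicit.
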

\begin{proof}
	We start with the first part of the lemma.
	\begin{align*}
	\sum_{X' \in \mathcal{P}_X, Y' \in \mathcal{P}_Y}
	{\mathcal{D}(X')\mathcal{D}(Y') \cdot d(X',Y')} &=
	\sum_{X' \in \mathcal{P}_X, Y' \in \mathcal{P}_Y}
	{\sum_{(x,y) \in E(X',Y')}{\mathcal{D}(x)\mathcal{D}(y)}} \\ &=
	\sum_{(x,y) \in E(X,Y)}{\mathcal{D}(x)\mathcal{D}(y)} = \mathcal{D}(X)\mathcal{D}(Y) \cdot d(X,Y).
	\end{align*}
	To prove the second part, we set $\varepsilon(X',Y') = d(X',Y') - d(X,Y)$ for each $X' \in \mathcal{P}_X$, $Y' \in \mathcal{P}_Y$. Now, 
	\begin{align*}
	&\sum_{X' \in \mathcal{P}_X, Y' \in \mathcal{P}_Y}
	{\mathcal{D}(X')\mathcal{D}(Y') \cdot d^2(X',Y')} =
	\sum_{X' \in \mathcal{P}_X, Y' \in \mathcal{P}_Y}
	{\mathcal{D}(X')\mathcal{D}(Y') \cdot \left(d(X,Y) + \varepsilon(X',Y') \right)^2}
	= \\&
	\sum_{X' \in \mathcal{P}_X, Y' \in \mathcal{P}_Y}
	{\mathcal{D}(X')\mathcal{D}(Y') \cdot
		\left( d^2(X,Y) + 2d(X,Y)\varepsilon(X',Y') + \varepsilon^2(X',Y') \right)}
	= \\&
	\mathcal{D}(X)\mathcal{D}(Y)d^2(X,Y) + 	
	\sum_{X' \in \mathcal{P}_X, Y' \in \mathcal{P}_Y}
	{\mathcal{D}(X')\mathcal{D}(Y') \cdot \varepsilon^2(X',Y')},
	\end{align*}
	where in the last equality we used the first part of the lemma.
\end{proof}	
Let $(G,\mathcal{D})$ be a vertex-weighted graph, and let $\mathcal{P} = \{P_1,\dots,P_r\}$ be a partition of $V(G)$. The index of $\mathcal{P}$, denoted $q(\mathcal{P})$, is defined as
$$
q(\mathcal{P}) = \sum_{1 \leq i < j \leq r}{\D(P_i)\D(P_j) \cdot d^2(P_i,P_j)}.
$$
\begin{lemma}\label{lem:index_refinement}
	For every vertex-partition $\mathcal{P}$ of a vertex-weighted graph $(G,\mathcal{D})$, and for every refinement $\mathcal{P}'$ of $\mathcal{P}$, we have $q(\mathcal{P}') \geq q(\mathcal{P})$.
\end{lemma}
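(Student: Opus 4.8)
The plan is to decompose the index $q(\mathcal{P}')$ according to how the parts of $\mathcal{P}'$ sit inside the parts of $\mathcal{P}$, and then apply the second identity of Lemma \ref{lem:density_mean_variance} block by block. Write $\mathcal{P} = \{P_1,\dots,P_r\}$, and for each $1 \leq i \leq r$ let $\mathcal{P}_i := \{Q \in \mathcal{P}' : Q \subseteq P_i\}$ be the partition of $P_i$ induced by $\mathcal{P}'$ (this is well defined since $\mathcal{P}'$ refines $\mathcal{P}$). Every unordered pair of distinct parts of $\mathcal{P}'$ either is contained in a common part $P_i$, or consists of one part inside some $P_i$ and another inside some $P_j$ with $i \neq j$; hence
$$
q(\mathcal{P}') = \sum_{i=1}^{r}\sum_{\{X',Y'\}\in\binom{\mathcal{P}_i}{2}} \mathcal{D}(X')\mathcal{D}(Y') d^2(X',Y') \;+\; \sum_{1 \leq i < j \leq r}\sum_{X' \in \mathcal{P}_i,\, Y' \in \mathcal{P}_j} \mathcal{D}(X')\mathcal{D}(Y') d^2(X',Y').
$$

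First I would observe that the intra-part double sum consists of nonnegative terms and is therefore at least $0$; it can only help. For the inter-part double sum, I would fix $1 \leq i < j \leq r$ and apply the second part of Lemma \ref{lem:density_mean_variance} with $X = P_i$, $Y = P_j$, $\mathcal{P}_X = \mathcal{P}_i$ and $\mathcal{P}_Y = \mathcal{P}_j$, which gives
$$
\sum_{X' \in \mathcal{P}_i,\, Y' \in \mathcal{P}_j} \mathcal{D}(X')\mathcal{D}(Y') d^2(X',Y') = \mathcal{D}(P_i)\mathcal{D}(P_j) d^2(P_i,P_j) + \sum_{X' \in \mathcal{P}_i,\, Y' \in \mathcal{P}_j} \mathcal{D}(X')\mathcal{D}(Y')\bigl(d(X',Y') - d(P_i,P_j)\bigr)^2 \geq \mathcal{D}(P_i)\mathcal{D}(P_j) d^2(P_i,P_j),
$$
where the last inequality holds because the correction term is a sum of squares. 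Summing this over all $1 \leq i < j \leq r$ and discarding the nonnegative intra-part contribution yields $q(\mathcal{P}') \geq \sum_{1 \leq i < j \leq r} \mathcal{D}(P_i)\mathcal{D}(P_j) d^2(P_i,P_j) = q(\mathcal{P})$, as required.

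I do not anticipate any genuine obstacle: the entire content is carried by the convexity identity already established in Lemma \ref{lem:density_mean_variance}, and what remains is bookkeeping. The only point worth a sentence is the degenerate case $\mathcal{D}(P_i) = 0$ for some $i$, in which case $\mathcal{D}(X') = 0$ for every $X' \in \mathcal{P}_i$ and all densities involving $P_i$ (or its subsets) are defined to be $0$, so the relevant terms vanish on both sides and the argument is unaffected — indeed Lemma \ref{lem:density_mean_variance} is itself stated with no positivity hypothesis.
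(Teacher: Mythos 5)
Your argument is correct and coincides with the paper's proof: both decompose $q(\mathcal{P}')$ into intra-part and inter-part blocks, discard the nonnegative intra-part contribution, and apply the second identity of Lemma \ref{lem:density_mean_variance} to each inter-part block $(P_i,P_j)$. The only difference is presentational — you spell out the decomposition explicitly, while the paper folds the first step into a single inequality $q(\mathcal{P}') \geq \sum_{i<j} \sum_{P'_i \in \mathcal{P}'_i, P'_j \in \mathcal{P}'_j} \mathcal{D}(P'_i)\mathcal{D}(P'_j) d^2(P'_i,P'_j)$.
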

\begin{proof}
	Write $\mathcal{P} = \{P_1,\dots,P_r\}$, and for each $1 \leq i \leq r$ put
	$\mathcal{P}'_i = \{P' \in \mathcal{P}' : P' \subseteq P_i\}$. Then
	$$
	q(\mathcal{P}') \geq \sum_{1 \leq i < j \leq r}{
		\sum_{P'_i \in \mathcal{P}'_i,P'_j \in \mathcal{P}'_j}
		{\mathcal{D}(P'_i)\mathcal{D}(P'_j) \cdot d^2(P'_i,P'_j)}
	} \geq
	\sum_{1 \leq i < j \leq r}
	{
		\mathcal{D}(P_i)\mathcal{D}(P_j) \cdot d^2(P_i,P_j)
	} =
	q(\mathcal{P}),
	$$
	where in the second inequality we used the second part of Lemma \ref{lem:density_mean_variance}.
\end{proof}
\begin{lemma}\label{lem:index_increase}
	Let $(G,\mathcal{D})$ be a vertex-weighted graph and let $\mathcal{P} = \{P_1,\dots,P_r\}$ be a non-$\varepsilon$-regular partition of $V(G)$. Then there is a refinement $\mathcal{P}'$ of $\mathcal{P}$ such that $|\mathcal{P}'| \leq |\mathcal{P}|\cdot 2^{|\mathcal{P}|}$ and $q(\mathcal{P}') \geq q(\mathcal{P}) + \varepsilon^5$.
\end{lemma}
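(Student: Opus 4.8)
The plan is to carry out the classical energy-increment argument underlying the proof of Szemer\'edi's regularity lemma, now in the vertex-weighted setting, with the variance identity of Lemma~\ref{lem:density_mean_variance} doing the analytic work and every quantity weighted by $\mathcal{D}$.

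First I would record the consequences of non-regularity. Since $\mathcal{P}$ is not $\varepsilon$-regular, the collection $\mathcal{I}$ of pairs $\{i,j\}$ for which $(P_i,P_j)$ is not $\varepsilon$-regular has $\sum_{\{i,j\}\in\mathcal{I}}\mathcal{D}(P_i)\mathcal{D}(P_j) > \varepsilon$, and for each such pair non-regularity supplies witness sets $P_{ij}\subseteq P_i$, $P_{ji}\subseteq P_j$ with $\mathcal{D}(P_{ij})\ge\varepsilon\mathcal{D}(P_i)$, $\mathcal{D}(P_{ji})\ge\varepsilon\mathcal{D}(P_j)$ and $|d(P_{ij},P_{ji})-d(P_i,P_j)|>\varepsilon$; in particular $\mathcal{D}(P_i),\mathcal{D}(P_j)>0$ (a zero-weight part forms only $\varepsilon$-regular pairs). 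Next I would build $\mathcal{P}'$: within each $P_i$ take the partition $\mathcal{R}_i$ of $P_i$ into the atoms of the Venn diagram generated by the sets $\{P_{ij}: j \text{ with }\{i,j\}\in\mathcal{I}\}$, and let $\mathcal{P}'=\bigcup_i\mathcal{R}_i$. Since each $\mathcal{R}_i$ is generated by at most $r-1$ sets, $|\mathcal{R}_i|\le 2^{r-1}$ and hence $|\mathcal{P}'|\le r\cdot2^{r-1}\le|\mathcal{P}|\cdot2^{|\mathcal{P}|}$; crucially, taking one common refinement per part ensures that $\mathcal{P}'$ simultaneously refines $\{P_{ij},P_i\setminus P_{ij}\}$ and $\{P_{ji},P_j\setminus P_{ji}\}$ for every $\{i,j\}\in\mathcal{I}$.

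For the energy estimate, I would split the sum defining $q(\mathcal{P}')$ according to the pair of parts of $\mathcal{P}$ containing a given pair of parts of $\mathcal{P}'$. Pairs lying inside a single $P_i$ contribute a nonnegative amount and may be discarded, so $q(\mathcal{P}')\ge\sum_{i<j}\sigma_{ij}$ with $\sigma_{ij}=\sum_{P'_i\in\mathcal{R}_i,\,P'_j\in\mathcal{R}_j}\mathcal{D}(P'_i)\mathcal{D}(P'_j)d^2(P'_i,P'_j)$. The second part of Lemma~\ref{lem:density_mean_variance}, applied to the partitions $\mathcal{R}_i$ of $P_i$ and $\mathcal{R}_j$ of $P_j$, gives
\[
\sigma_{ij}=\mathcal{D}(P_i)\mathcal{D}(P_j)d^2(P_i,P_j)+\sum_{P'_i\in\mathcal{R}_i,\,P'_j\in\mathcal{R}_j}\mathcal{D}(P'_i)\mathcal{D}(P'_j)\big(d(P'_i,P'_j)-d(P_i,P_j)\big)^2 .
\]
For $\{i,j\}\notin\mathcal{I}$ I discard the error term; for $\{i,j\}\in\mathcal{I}$ I keep only the terms with $P'_i\subseteq P_{ij}$ and $P'_j\subseteq P_{ji}$ and apply Jensen's inequality together with the first part of Lemma~\ref{lem:density_mean_variance} (namely $\sum\mathcal{D}(P'_i)\mathcal{D}(P'_j)d(P'_i,P'_j)=\mathcal{D}(P_{ij})\mathcal{D}(P_{ji})d(P_{ij},P_{ji})$), which bounds the error term below by $\mathcal{D}(P_{ij})\mathcal{D}(P_{ji})\big(d(P_{ij},P_{ji})-d(P_i,P_j)\big)^2>\varepsilon^2\cdot\varepsilon\mathcal{D}(P_i)\cdot\varepsilon\mathcal{D}(P_j)=\varepsilon^4\mathcal{D}(P_i)\mathcal{D}(P_j)$. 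Summing over $i<j$ then gives $q(\mathcal{P}')\ge q(\mathcal{P})+\varepsilon^4\sum_{\{i,j\}\in\mathcal{I}}\mathcal{D}(P_i)\mathcal{D}(P_j)>q(\mathcal{P})+\varepsilon^4\cdot\varepsilon=q(\mathcal{P})+\varepsilon^5$, as desired.

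The argument has no real difficulty; the main thing to be careful about is the bookkeeping in the last step — correctly decomposing $q(\mathcal{P}')$ over the pairs of $\mathcal{P}$, and making the refinement inside each $P_i$ common to all the pairs that $P_i$ participates in (this is precisely why we refine each part once by the full Venn diagram rather than pair-by-pair) — together with checking that the Jensen step is applied to positive-weight sets, which holds by the remark in the first step.
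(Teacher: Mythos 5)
Your proof is correct and follows essentially the same route as the paper's: the same refinement (the common refinement of the witness-set partitions within each $P_i$, giving at most $r\cdot 2^{r-1}$ parts), the same use of the variance identity from Lemma~\ref{lem:density_mean_variance}, and the same restriction to witness subsets to extract the $\varepsilon^4\mathcal{D}(P_i)\mathcal{D}(P_j)$ increment. The only cosmetic difference is that you invoke Jensen's inequality where the paper expands the square and observes the cross-term vanishes via the first part of Lemma~\ref{lem:density_mean_variance} — these are the same calculation.
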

\begin{proof}
	For each $1 \leq i < j \leq r$ for which $(P_i,P_j)$ is not $\varepsilon$-regular, let $P_{i,j} \subseteq P_i$, $P_{j,i} \subseteq P_j$ be such that 
	$\D(P_{i,j}) \geq \varepsilon \D(P_i), \D(P_{j,i}) \geq \varepsilon \D(P_j)$, and $|d(P_{i,j},P_{j,i}) - d(P_i,P_j)| > \varepsilon$.
	For each $1 \leq i \leq r$, let $\mathcal{P}_i$ be the partition of $P_i$, formed by taking the common refinement of the partitions $\{P_{i,j}, P_i \setminus P_{i,j}\}$, where $j$ runs over all indices for which $(P_i,P_j)$ is not $\varepsilon$-regular. Let $\mathcal{P}' = \bigcup_{i=1}^{r}{\mathcal{P}_i}$ be the resulting refinement of $\mathcal{P}$. Then clearly $|\mathcal{P}'| \leq |\mathcal{P}| \cdot 2^{|\mathcal{P}|}$. We now show that $q(\mathcal{P}') \geq q(\mathcal{P}) + \varepsilon^5$.
	First, observe that by Lemma \ref{lem:density_mean_variance}, for every $1 \leq i < j \leq r$ we have
	$
	\sum_{X' \in \mathcal{P}_i, Y' \in \mathcal{P}_j}{\D(X')\D(Y') \cdot d^2(X',Y')} \geq \mathcal{D}(P_i)\mathcal{D}(P_j) \cdot d^2(P_i,P_j).
	$
	Next, fix any pair $1 \leq i < j \leq r$ for which $(P_i,P_j)$ is not $\varepsilon$-regular. By Lemma \ref{lem:density_mean_variance} we have
	\begin{align*}
	&\sum_{X' \in \mathcal{P}_i, Y' \in \mathcal{P}_j}{\D(X')\D(Y') \cdot d^2(X',Y')}
	= \\&
	\mathcal{D}(P_i)\mathcal{D}(P_j) \cdot d^2(P_i,P_j) +
	\sum_{X' \in \mathcal{P}_i, Y' \in \mathcal{P}_j}
	{\D(X')\D(Y') \cdot
		\left(d(X',Y') - d(P_i,P_j)\right)^2}
	\geq \\&
	\mathcal{D}(P_i)\mathcal{D}(P_j) \cdot d^2(P_i,P_j) +
	\sum_{X' \subseteq P_{i,j}, Y' \subseteq P_{j,i}}
	{\D(X')\D(Y') \cdot \left(d(X',Y') - d(P_i,P_j)\right)^2}
	= \\&
	\mathcal{D}(P_i)\mathcal{D}(P_j) \cdot d^2(P_i,P_j) + 
	\sum_{X' \subseteq P_{i,j}, Y' \subseteq P_{j,i}}
	{\D(X')\D(Y')
		\cdot \left[ \left(d(X',Y') - d(P_{i,j},P_{j,i})\right) +
		\left(d(P_{i,j},P_{j,i}) - d(P_i,P_j)\right) \right]^2}
	\geq \\&
	\mathcal{D}(P_i)\mathcal{D}(P_j) \cdot d^2(P_i,P_j) + 
	\mathcal{D}(P_{i,j})\mathcal{D}(P_{j,i}) \cdot 
	\left(d(P_{i,j},P_{j,i}) - d(P_i,P_j)\right)^2
	\geq
	\mathcal{D}(P_i)\mathcal{D}(P_j) \cdot (d^2(P_i,P_j) + \varepsilon^4),
	\end{align*}
	where in the penultimate inequality we used the first part of Lemma \ref{lem:density_mean_variance} to infer that
	$$
	\sum_{X' \subseteq P_{i,j}, Y' \subseteq P_{j,i}}
	{\D(X')\D(Y') \cdot \left( d(X',Y') - d(P_{i,j},P_{j,i}) \right)} = 0.
	$$
	Denoting by $\mathcal{N}$ the set of pairs $1 \leq i < j \leq r$ for which $(P_i,P_j)$ is not $\varepsilon$-regular, we see that
	\begin{align*}
	q(\mathcal{P}') &\geq
	\sum_{1 \leq i < j \leq r} \;
	{	\sum_{X' \in \mathcal{P}_i, Y' \in \mathcal{P}_j}
		{\D(X')\D(Y') \cdot d^2(X',Y')}}
	\\ &\geq
	\sum_{1 \leq i < j \leq r}
	{\D(P_i)\D(P_j) \cdot d^2(P_i,P_j)} +
	\sum_{(i,j) \in \mathcal{N}}{\D(P_i)\D(P_j) \cdot \varepsilon^4}
	\\ &=
	q(\mathcal{P}) +
	\varepsilon^4 \cdot \sum_{(i,j) \in \mathcal{N}}{\D(P_i)\D(P_j)} \geq
	q(\mathcal{P}) + \varepsilon^5,
	\end{align*}
	where in the last inequality we used the assumption that $\mathcal{P}$ is not $\varepsilon$-regular.
\end{proof}

\begin{proof}[Proof of Lemma \ref{lem:reg}]
	For $i \geq 0$, if $\mathcal{P}_i$ is not $\varepsilon$-regular then we apply Lemma \ref{lem:index_increase} to obtain a partition $\mathcal{P}_{i+1}$ which refines $\mathcal{P}_i$ and satisfies $|\mathcal{P}_{i+1}| \leq |\mathcal{P}_i|\cdot 2^{|\mathcal{P}_i|}$ and $q(\mathcal{P}_{i+1}) \geq q(\mathcal{P}_i) + \varepsilon^5$. Since the index of any partition is at most $1$, this process must end after at most $\varepsilon^{-5}$ steps. When the process ends, we have an $\varepsilon$-regular partition. 
	Since the number of steps depends only on $\varepsilon$,
	the size of the resulting final partition can be upper-bounded by a function of $\varepsilon$ and $|\mathcal{P}_0|$, as required.
\end{proof}
\begin{proof}[Proof of Lemma \ref{lem:strong_reg}]
	We may assume, without loss of generality, that $\mathcal{E}$ is monotone decreasing.
	Let $\mathcal{P}_1$ be the partition obtained by applying Lemma \ref{lem:reg} with parameter $\varepsilon = \mathcal{E}(0)$ and with the partition $\mathcal{P}_0$. Next, for each $i \geq 1$, apply Lemma \ref{lem:reg} with parameter
	$\mathcal{E}(|\mathcal{P}_i|)$
	and with the partition $\mathcal{P}_i$ to obtain a partition $\mathcal{P}_{i+1}$ which is $\mathcal{E}(|\mathcal{P}_i|)$-regular and refines $\mathcal{P}_i$. In light of Lemma \ref{lem:index_refinement}, and as the index of any partition is at most $1$, there must be some $1 \leq i \leq \frac{1}{\mathcal{E}^2(0)}$ for which
	$q(\mathcal{P}_{i+1}) \leq q\mathcal(\mathcal{P}_i) + \mathcal{E}^2(0)$. For such an $i$, set $\mathcal{P} = \mathcal{P}_i$ and $\mathcal{Q} = \mathcal{P}_{i+1}$.
	Since $|\mathcal{P}_0| \leq m$ and the number of steps in the process is at most $\mathcal{E}^2(0)$, and since the size of the partition guaranteed by Lemma \ref{lem:reg} can be bounded from above by a function of the parameters of this lemma (which in our case depend only on $\mathcal{E}$ and $m$), we see that $|\mathcal{Q}|$ too can be bounded from above by a function of $\mathcal{E}$ and $m$. This proves Item 1.
	
	Item 2 is immediate from our choice of $\mathcal{Q}$.
	It remains to prove Item 3. By the definition of the index and by our choice of $\mathcal{P}$ and $\mathcal{Q}$, we have
	\begin{align*}
	&q(\mathcal{P}) + \mathcal{E}^2(0) \geq q(\mathcal{Q}) \geq
	\sum_{P_1,P_2 \in \mathcal{P}}{
		\sum_{Q_1 \subseteq P_1, Q_2 \subseteq P_2}
		{\D(Q_1)\D(Q_2) \cdot d^2(Q_1,Q_2)}
	}
	= \\&
	\sum_{P_1,P_2 \in \mathcal{P}}{\D(P_1)\D(P_2) \cdot d^2(P_1,P_2)} +
	\sum_{P_1,P_2 \in \mathcal{P}}{
		\sum_{Q_1 \subseteq P_1, Q_2 \subseteq P_2}
		{\D(Q_1)\D(Q_2) \cdot \left( d(Q_1,Q_2) - d(P_1,P_2) \right)^2}}
	= \\&
	q(\mathcal{P}) +
	\sum_{P_1,P_2 \in \mathcal{P}}{
		\sum_{Q_1 \subseteq P_1, Q_2 \subseteq P_2}
		{\D(Q_1)\D(Q_2) \cdot \left( d(Q_1,Q_2) - d(P_1,P_2) \right)^2}},
	\end{align*}
	where in the first equality we used the second part of Lemma \ref{lem:density_mean_variance}. The above implies that
	$$
	\sum_{P_1,P_2 \in \mathcal{P}}{
		\sum_{Q_1 \subseteq P_1, Q_2 \subseteq P_2}
		{\D(Q_1)\D(Q_2) \cdot \left( d(Q_1,Q_2) - d(P_1,P_2) \right)^2}} \leq \mathcal{E}^2(0),
	$$
	and hence
	\begin{align*}
	&\sum_{P_1,P_2 \in \mathcal{P}}{
		\sum_{Q_1 \subseteq P_1, Q_2 \subseteq P_2}
		{\D(Q_1)\D(Q_2) \cdot \left| d(Q_1,Q_2) - d(P_1,P_2) \right|}}
	\leq \\&
	\sqrt{\sum_{P_1,P_2 \in \mathcal{P}}{
			\sum_{Q_1 \subseteq P_1, Q_2 \subseteq P_2}
			{\D(Q_1)\D(Q_2) \cdot \left( d(Q_1,Q_2) - d(P_1,P_2) \right)^2}}} \leq \mathcal{E}(0),
	\end{align*}
	where the first inequality follows from Cauchy-Schwarz. This completes the proof.
\end{proof}

\begin{thebibliography}{99}
	\bibitem{AFKS}
	N. Alon, E. Fischer, M. Krivelevich and M. Szegedy, Efficient testing of large graphs. Combinatorica 20 (2000), 451--476.
	
	\bibitem{AFNS}
	N. Alon, E. Fischer, I. Newman and A. Shapira, A combinatorial characterization of the testable graph properties: it's all about regularity. SIAM Journal on Computing, 39(1) (2009), 143--167.
	
	\bibitem{AF}
	N. Alon and J. Fox, Easily testable graph properties, Combin. Probab. Comput. 24 (2015), 646--657.
	
	\bibitem{AS_induced}
	 N. Alon and A. Shapira, A characterization of easily testable induced subgraphs. Combinatorics, Probability and Computing 15 (2006), 791--805.
	
	\bibitem{AS_hereditary}
	N. Alon and A. Shapira, A characterization of the (natural) graph properties testable with one-sided error. SIAM Journal on Computing 37 (2008), 1703--1727.
	
	\bibitem{AS_mono}
	N. Alon and A. Shapira, Every monotone graph property is testable. SIAM Journal on Computing, 38(2) (2008), 505--522.
	
	\bibitem{AT}
	T. Austin and T. Tao, On the testability and repair of hereditary hypergraph properties, Random Structures and Algorithms 36 (2010), 373--463.
	
	\bibitem{AG}
	L. Avigad and O. Goldreich, Testing graph blow-up. In Studies in Complexity and Cryptography, Miscellanea on the Interplay between Randomness and Computation (2011), pp. 156--172. Springer, Berlin, Heidelberg.
	
	\bibitem{BLR}
	M. Blum, M. Luby and R. Rubinfeld, Self-testing/correcting with applications to numerical
	problems, Journal of Computer and System Sciences 47 (1993), 549--595.
	
	\bibitem{CLSSX}
	X. Chen, Z. Liu, R.A. Servedio, Y. Sheng and J. Xie, Distribution-free junta testing. In Proceedings of the 50th Annual ACM SIGACT Symposium on Theory of Computing (2018), 749--759.
	
	\bibitem{CX}
	X. Chen and J. Xie, Tight bounds for the distribution-free testing of monotone conjunctions.
	In Proceedings of the Twenty-Seventh Annual ACM-SIAM Symposium on Discrete Algorithms,
	SODA (2016), 54--71.
	
    \bibitem{CF}
    D. Conlon and J. Fox, Bounds for graph regularity and removal lemmas. GAFA 22 (2012), 1191--1256.

	\bibitem{CF1}
	D. Conlon and J. Fox,
	Graph removal lemmas, Surveys in Combinatorics, Cambridge university press, 2013, 1--50.

	\bibitem{CP}
	B. Csaba and A. Pluh\'{a}r, A weighted regularity lemma with applications.  International Journal of Combinatorics, 2014.
	
	\bibitem{DR}
	E. Dolev and D. Ron, Distribution-free testing for monomials with a sublinear number of queries. Theory of Computing, 7(1) (2011), 155--176.
	
	\bibitem{GS_C4}
	L. Gishboliner and A. Shapira, Efficient removal without efficient regularity. Combinatorica, to appear.
	
	\bibitem{Glasner_Servedio}
	D. Glasner and R. A. Servedio. Distribution-free testing lower bound for basic boolean functions. Theory of Computing 5 (2009), 191--216.
	
	\bibitem{Goldreich1}	
	O. Goldreich, {\bf Introduction to Property Testing}. Cambridge University Press, 2017.
	
	\bibitem{Goldreich_VDF}
	O. Goldreich, Testing Graphs in Vertex-Distribution-Free Models. Preprint available online.
	
    \bibitem{GGR}
    O. Goldreich, S. Goldwasser, and D. Ron, Property testing and its connection to learning and approximation. J. ACM 45 (1998), 653--750.

	\bibitem{GR_POT}
	O. Goldreich and D. Ron, On proximity-oblivious testing. SIAM Journal on Computing 40 (2011), 534--566.

	\bibitem{GR_bounded}
	O. Goldreich and D. Ron, Property testing in bounded degree graphs. Algorithmica 32 (2002), 302--343.
	
	\bibitem{GT}
	O. Goldreich and L. Trevisan, Three theorems regarding testing graph properties. Random Structures \& Algorithms 23 (2003), 23--57.

	\bibitem{HK}
	S. Halevy and E. Kushilevitz, Distribution-free property testing. In Approximation, Randomization, and Combinatorial Optimization: Algorithms and Techniques (2003) (pp. 302-317). Springer, Berlin, Heidelberg.

    \bibitem{KS}
    S. Kalyanasundaram and A. Shapira, A Wowzer-type lower bound for the strong regularity-lemma, Proc. of the London Math Soc 106 (2013), 621--649.

	\bibitem{LS}
	L. Lov\'asz and B. Szegedy, Testing properties of graphs and functions,
	Israel J. Math. 178 (2010), 113--156.

	\bibitem{Ron}
	D. Ron, Property Testing: A Learning Theory Perspective. Foundations and Trends in Machine Learning, 1 (2008), 307--402.

	\bibitem{Sudan}
	R. Rubinfeld and M. Sudan, Robust characterizations of polynomials with applications to program testing, SIAM Journal on Computing 25 (1996), 252--271.


\bibitem{RuzsaSz76}
I.Z.~Ruzsa and E.~Szemer\'edi,
{Triple systems with no six points carrying three triangles}, in Combinatorics (Keszthely, 1976), Coll. Math.
Soc. J. Bolyai 18, Volume II, 939--945.

	\bibitem{Szemeredi}
	E. Szemer\'{e}di, Regular partitions of graphs. In: Proc. Colloque Inter. CNRS, 1978, 399--401.
	
	\bibitem{Valiant}
	L. Valiant, A theory of the learnable. Communications of the ACM, 27(11):1134--1142, 1984.
	
\end{thebibliography}
\end{document}